\newtheorem{thm}{Theorem}[section]
\newtheorem{cor}[thm]{Corollary}
\newtheorem{lem}[thm]{Lemma}
\newtheorem{prop}[thm]{Proposition}
\theoremstyle{definition}
\theoremstyle{remark}
\newtheorem{rem}[thm]{\bf Remark}
\newtheorem{exe}[thm]{\bf Example}
\numberwithin{equation}{section}
\begin{document}
\title[ACTION OF NON ABELIAN GROUP OF HOMOTHETIES ON $\mathbb{R}^{n}$]{ ACTION OF NON  ABELIAN  GROUP GENERATED BY AFFINE  HOMOTHETIES  ON
$\mathbb{R}^{n}$}

\author{Adlene Ayadi and Yahya N'dao}

\address{Adlene Ayadi, Department of Mathematics,
Faculty of Sciences of Gafsa, Gafsa, Tunisia.}
 \address{Yahya N'dao, Department of Mathematics, Faculty of Sciences of Gafsa,
Gafsa, Tunisia}

\email{ adleneso@yahoo.fr,\;\;\; yahiandao@yahoo.fr}

\thanks{This work is supported by the research unit: syst\`emes dynamiques et combinatoire:
99UR15-15} \subjclass[2000]{37C85} \keywords{Homothety, orbit,
density, minimal, non abelian, action, dynamic,...}

\begin{abstract}

In this paper, we study the action of non abelian group $G$
generated by affine homotheties  on\ $\mathbb{R}^{n}$. We prove
that $G$ satisfies one of the following properties: (i) there exist a  subgroup $\Lambda_{G}$
of $\mathbb{R}^{*}$ containing $0$ in its closure (i.e.  $0\in\overline{\Lambda_{G}}$), a $G$-invariant affine subspace $E_{G}$ of $\mathbb{R}^{n}$ and $a\in E_{G}$ such that
 $\overline{G(x)}=\overline{\Lambda_{G}}(x-a)+E_{G}$ for every $x\in \mathbb{R}^{n}$. In particular,
 $\overline{G(x)}=E_{G}$ for every $x\in E_{G}$ and every orbit in
$U=\mathbb{R}^{n}\backslash E_{G}$  is minimal in $U$. (ii) there exists  a closed subgroup $H_{G}$
of $\mathbb{R}^{n}$  and  $a\in \mathbb{R}^{n}$ such that for
every   $x\in\mathbb{R}^{n}$   we have
 $\overline{G(x)}=(x+H_{G})\cup(-x+a+ H_{G})$.
\end{abstract}
\maketitle

\section{\bf Introduction }

 A map  $f\ :  \mathbb{R}^{n}\longrightarrow
\mathbb{R}^{n}$  is called an affine homothety  if there exists
$\lambda\in\mathbb{R}\backslash\{-1,0,1\}$  and  $a\in
\mathbb{R}^{n}$  such that  $f(x)= \lambda (x-a)+a$  for every
 $x\in \mathbb{R}^{n}$. (i.e.  $f=T_{a}\circ (\lambda.
id_{\mathbb{R}^{n}})\circ T_{-a}$, \ $T_{a}:x\longmapsto x+a$).  Write $f= (a,\lambda)$  and we call \ $a$ \
{\it the center} of  $f$ and \ $\lambda$ \ the ratio of $f$. Denote by  $$\mathcal{H}(n,\mathbb{R}):=\left\{\ f:x\
\longmapsto\ \lambda x+a;\ a\in\mathbb{R}^{n},\
\lambda\in\mathbb{R}^{*} \right\}$$
 the affine  group generated by all affine homotheties of $\mathbb{R}^{n}$.
 We let  $\mathcal{T}_{n}(\mathbb{R})$   the group of
translation of $\mathbb{R}^{n}$. We say a {\it group of affine
homotheties} any subgroup of $\mathcal{H}(n,\mathbb{R})$.
\bigskip

 Denote by  $\mathcal{S}_{n}$   the subgroup of
$\mathcal{H}(n,\mathbb{R})$  of affine symmetries, i.e.
$$\mathcal{S}_{n}:=\left\{\ f:x\ \longmapsto\ \varepsilon x+a;\
a\in\mathbb{R}^{n},\ \varepsilon\in\{-1,1\}\right\}$$

Write $f=(a,\varepsilon)$, for every $f\in \mathcal{S}_{n}$
 defined by $f(x)=\varepsilon x +a$. Under above notation, a map
 $f=(a,\lambda)\in \mathcal{H}(n, \mathbb{R})$ is either an affine homothety if
 $\lambda\notin\{-1,0,1\}$, and here $a=f(a)$, or an affine
 symmetry (resp. translation) if
 $\lambda=-1$ (resp.  $\lambda=1$), and in this case $a=f(0)$.
\bigskip

 Let $G$ be a non abelian  subgroup of
$\mathcal{H}(n,\mathbb{R})$. There is a natural action
$\mathcal{H}(n,\mathbb{R})\times \mathbb{R}^{n} :\
\longrightarrow\ \mathbb{R}^{n}$.\ $(f, v)\ \longmapsto\ f(v)$.
For a vector $v \in \mathbb{R}^{n}$, denote by $G(v) := \{f(v):\  f
\in G \}\subset \mathbb{R}^{n}$ the \emph{orbit} of $G$ through
$v$. A subset  $A \subset \mathbb{R}^{n}$  is called
\emph{$G$-invariant} if $f(A)\subset A$  for any $f\in G$; that
is  $A$  is a union of orbits and denote by $\overline{A}$
(resp. $\overset{\circ}{A}$ ) the closure (resp. interior) of $A$.
\\
If  $U$  is an
 open  $G$-invariant set, the orbit  $G(v)\subset U$ \ is
 called \emph{minimal in  $U$} if  $\overline{G(v)}\cap
U = \overline{G(w)}\cap U$  for every  $w\in \overline{G(v)}\cap
U$.
\medskip

 We say that $F$ is an {\it affine subspace} of
$\mathbb{R}^{n}$ with dimension $p$ if  $F=E+a$, for some $a\in
\mathbb{R}^{n}$ and some vector subspace $E$ of $\mathbb{R}^{n}$
with dimension $p$. For every subset $A$ of $\mathbb{R}^{n}$,
denote by $vect(A)$ the vector subspace of $\mathbb{R}^{n}$
generated by all elements of $A$.
\
\\
\\
 Denote by:\
 \\
 -   $id_{\mathbb{R}^{n}}$ the identity map of $\mathbb{R}^{n}$.\
 \\
 - $\Lambda_{G}:=\{\lambda: \ f=(a,\lambda)\in
G\}$.\ It is obvious that $\Lambda_{G}$ is a subgroup of $\mathbb{R}^{*}$ (see Lemma ~\ref{L:00}).
 \\
- $\mathrm{Fix}(f):=\{x\in \mathbb{R}^{n}: \
f(x)=x\}$,  for every $f\in \mathcal{H}(n,
\mathbb{R})$.\
\\
\\
 - $\Gamma_{G}:=\left\{\begin{array}{cc}
  \underset{f\in
G\backslash\mathcal{S}_{n}}{\bigcup}\mathrm{Fix}(f), \ & \ \mathrm{if} \  \
G\backslash\mathcal{S}_{n}\neq\emptyset.\\
\\
 \emptyset, \ \ \ \ \ \ \   & \ \mathrm{if}  \ \
G\subset\mathcal{S}_{n}  \ \ \
\end{array}\right.$
\
\\
\\
- $G_{1}:=G\cap \mathcal{T}_{n}$, \ we have $G_{1}$ \ is a subgroup
of \ $\mathcal{T}_{n}$.
\
\\
- $H_{G}=G_{1}(0)$, we have $H_{G}$ is an additif subgroup of $\mathbb{R}^{n}$.
\
\\
- \ $\gamma_{G}:=\{f(0),\ \ f\in G\cap \mathcal{S}_{n}\}$.
\
\\
- \ $\Omega_{G}:= \Gamma_{G}\cup \gamma_{G}$.
\
\\
- \ $\delta_{G}:=\{f(0),\ \ f\in G\cap (\mathcal{S}_{n}\backslash
\mathcal{T}_{n}) \}$.
\
\\
- \ $E_{G}=Aff(\Omega_{G})$ the smaller affine subspace of \
$\mathbb{R}^{n}$ \ containing \ $\Omega_{G}$.
\\
\\
Remark that $\Omega_{G}\neq\emptyset$, since $\Gamma_{G}\neq\emptyset$ or $\gamma_{G}\neq\emptyset$, and so
$E_{G}\neq\emptyset$.
\

We describe here,  closure of all orbit defined by action of  non
abelian subgroups of $\mathcal{H}(n, \mathbb{R})$. We distinct two
considerable states. When $G\backslash \mathcal{S}_{n}\neq
\emptyset$, this means that  $G$ contains an affine homothety
different to a symmetry (i.e. its homothety ratio $\lambda$ has a
modulus $|\lambda|\neq 1$). In this case we prove that closure of
any orbit is an affine subspace of $\mathbb{R}^{n}$ or it is union
of countable affine subspaces of $\mathbb{R}^{n}$. As consequence,
we deduce that $G$ has a minimal set in $\mathbb{R}^{n}$, which is
contained in closure of all orbit.

In the other state,  $G$ is a non abelian subgroup of
$\mathcal{S}_{n}$, then it contains necessarily  an affine
symmetry $f\in \mathcal{S}_{n}\backslash\mathcal{T}_{n}$. In this
case we prove that closure of any orbit of $G$ is union of at most
two closed subgroups of $\mathbb{R}^{n}$. As consequence, we
deduce that every orbit is minimal in $\mathbb{R}^{n}$.

 For $n=1$, we prove that action for every non
abelian affine group is minimal (i.e. all orbits of $G$ are dense
in $\mathbb{R}$). In \cite{VMS} and \cite{MJVH}, the authors are
interested to the semigroup case, in \cite{MJVH}, Mohamed Javaheri
 has proved a strong density results for the orbits of real
numbers under the action of the semigroup generated by the affine
transformations $T_{0}(x) = \frac{x}{a}$ and $T_{1}(x) = bx +1$,
where $a, b > 1$.
 These density results are formulated as generalizations of the Dirichlet approximation theorem and improve the
results of Bergelson, Misiurewicz, and Senti in \cite{VMS}.

\medskip

 Our principal results can be stated as
follows:

\begin{thm}\label{T:1} Let  $G$ be a non abelian subgroup of
$\mathcal{H}(n,\mathbb{R})$.  One has:\\ $(1)$ If
$G\backslash\mathcal{S}_{n}\neq\emptyset$, then:
\begin{itemize}
\item[(i)]$0\in \overline{\Lambda_{G}}$ and $E_{G}$ is a
$G$-invariant affine subspace of $\mathbb{R}^{n}$ with
dimension $p\geq 1$ .
  \item [(ii)]  $\overline{G(x)}=E_{G}$, for every $x\in E_{G}$.
   \item [(iii)]there exists $a\in E_{G}$ such that  $\overline{G(x)}=\overline{\Lambda_{G}}(x-a)+E_{G}$, for every $x\in U=\mathbb{R}^{n}\backslash E_{G}$.
   \end{itemize}
$(2)$ If  $G\subset{S}_{n}$,  then $H_{G}$ is a closed subgroup
of $\mathbb{R}^{n}$  and  there exists $a\in \mathbb{R}^{n}$ such that
 $\overline{G(x)}=(x+H_{G})\cup(-x+a+ H_{G})$, for
every   $x\in\mathbb{R}^{n}$.
\end{thm}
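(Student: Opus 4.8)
The plan is to control everything through the \emph{ratio homomorphism} $L\colon G\to\mathbb{R}^{*}$, $L(a,\lambda)=\lambda$, whose image is $\Lambda_{G}$ and whose kernel is $G_{1}=G\cap\mathcal{T}_{n}$. In case $(1)$ pick $f=(a_{f},\lambda_{f})\in G$ with $\abs{\lambda_{f}}\neq 1$; since $\Lambda_{G}$ is a subgroup containing $\lambda_{f}$ it contains every power $\lambda_{f}^{k}$, and one of $\lambda_{f}^{k}$, $\lambda_{f}^{-k}$ tends to $0$, so $0\in\overline{\Lambda_{G}}$. The second clean observation is that the identity lies in $G\cap\mathcal{S}_{n}$, so $0=\mathrm{id}(0)\in\gamma_{G}\subset\Omega_{G}\subset E_{G}$; hence $E_{G}$ is an affine subspace through the origin, i.e. a \emph{vector} subspace, which is what makes all subsequent linear algebra clean. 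Writing any $g=(a_{g},\lambda_{g})\in G$ as $g(x)=\lambda_{g}x+g(0)$, I will use the quotient $\pi\colon\mathbb{R}^{n}\to\mathbb{R}^{n}/E_{G}$: since $E_{G}$ will be shown invariant, $\pi(g(x))=\lambda_{g}\pi(x)$ for every $g\in G$ (the translation part $g(0)\in E_{G}$ dies in the quotient), whence $\pi(G(x))=\Lambda_{G}\,\pi(x)$. This is the device that isolates the transverse scaling from the motion inside $E_{G}$.

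For assertion (i) I first prove invariance. For every $g\in G$ one has $g(0)\in E_{G}$: if $g$ is a symmetry or translation this is the definition of $\gamma_{G}$, and if $g$ is a genuine homothety then $g(0)=(1-\lambda_{g})a_{g}$ is a scalar multiple of the center $a_{g}\in\Gamma_{G}\subset E_{G}$, hence lies in the subspace $E_{G}$. Since the linear part of $g$ is $\lambda_{g}\,\mathrm{id}$, which preserves the subspace $E_{G}$, and the translation part $g(0)$ lies in $E_{G}$, we get $g(E_{G})\subseteq E_{G}$; thus $E_{G}$ is $G$-invariant. For $\dim E_{G}=p\geq 1$ I argue by contraposition: if $E_{G}=\{0\}$ then $\Omega_{G}=\{0\}$, forcing every homothety to be centered at $0$, every symmetry to be $x\mapsto -x$, and $H_{G}=\{0\}$; then every element of $G$ is the linear map $x\mapsto\lambda_{g}x$, and these all commute, contradicting non-abelianness. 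Hence $p\geq1$, completing (i).

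Assertion (iii) then follows from the quotient picture: for $x\notin E_{G}$ we have $\pi(x)\neq0$ and $\pi(\overline{G(x)})=\overline{\Lambda_{G}}\,\pi(x)$, which contains $0$ because $0\in\overline{\Lambda_{G}}$; fixing any $a\in E_{G}$ and combining the transverse value $\overline{\Lambda_{G}}(x-a)$ with an argument analogous to (ii) inside each fibre gives $\overline{G(x)}=\overline{\Lambda_{G}}(x-a)+E_{G}$ (independent of the choice of $a$ because $E_{G}$ is a subspace). \textbf{The main obstacle is the density statement (ii), $\overline{G(x)}=E_{G}$ for $x\in E_{G}$}, i.e. minimality of the restricted action on $E_{G}$. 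The tools I would assemble are: the conjugation identities $f T_{v}f^{-1}=T_{\lambda_{f}v}$ and $f_{1}f_{2}^{-1}=T_{(\lambda-1)(c_{2}-c_{1})}$ for homotheties of equal ratio, which show that $H_{G}$ is stable under multiplication by $\Lambda_{G}$ and is enriched by differences of centers; the contraction furnished by $0\in\overline{\Lambda_{G}}$; and an induction on $p$ in which the equality $E_{G}=Aff(\Omega_{G})$ is precisely what rules out a proper closed $G$-invariant subspace absorbing the orbit. Turning ``enough directions to fill $E_{G}$'' into a rigorous exclusion of proper invariant subspaces is the delicate point, and the place where non-abelianness is used most heavily.

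In case $(2)$, $G\subset\mathcal{S}_{n}$ and non-abelianness forbids $G\subset\mathcal{T}_{n}$, so the sign character $\eps\colon G\to\{-1,1\}$ is onto with kernel $G_{1}$; thus $[G:G_{1}]=2$ and $G=G_{1}\sqcup s_{0}G_{1}$ for any fixed symmetry $s_{0}=(a,-1)\in G$. The governing identity is that a product of two symmetries is a translation, $(b,-1)\circ(b',-1)=T_{b-b'}$, so $s_{0}G_{1}$ is exactly $\{x\mapsto -x+a+h:\ h\in H_{G}\}$. Consequently, using $H_{G}=-H_{G}$, the orbit splits exactly as $G(x)=(x+H_{G})\cup(-x+a+H_{G})$. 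Since the closure of a subgroup is a closed subgroup, passing to closures gives $\overline{G(x)}=(x+\overline{H_{G}})\cup(-x+a+\overline{H_{G}})$ with $\overline{H_{G}}$ closed, which is the asserted form; the single topological input here is precisely the replacement of $H_{G}$ by its closure, after which each piece is a coset of a closed subgroup and the two-coset description is manifestly invariant along the orbit.
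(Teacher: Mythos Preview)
Your observation that $0\in E_{G}$ is always true---because the identity $\mathrm{id}=(0,1)\in G\cap\mathcal{S}_{n}$ gives $0=\mathrm{id}(0)\in\gamma_{G}\subset\Omega_{G}$---is correct and genuinely cleaner than the paper's approach. The paper never notices this and instead repeatedly conjugates by $T_{-a}$ (Lemma~\ref{L:1}(ii)) to reduce to the case where $E_{G}$ is a vector space. Your quotient map $\pi\colon\mathbb{R}^{n}\to\mathbb{R}^{n}/E_{G}$ is likewise a tidy way to organise~(iii), and your treatment of parts~(1)(i) and~(2) is correct and essentially the same as the paper's (Lemma~\ref{L:2}, Lemma~\ref{L:6}(iii), Proposition~\ref{p:3}).

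The substantive gap is exactly where you flag it: assertion~(1)(ii). Your sketch lists the right ingredients---the conjugation identities, $\Lambda_{G}$-stability of $H_{G}$, contraction from $0\in\overline{\Lambda_{G}}$, induction on $p$---but the paper's proof shows that a concrete density input is unavoidable. The paper proves by hand (Lemma~\ref{L:7}) that the set $\{q\lambda^{p}(1-\lambda^{p}):p,q\in\mathbb{Z}\}$ is dense in $\mathbb{R}$ for $\lambda>1$, then bootstraps this through Lemmas~\ref{L:8}--\ref{L:10} to get density of the two-generator orbit along the line $\mathbb{R}(b-a)+a$, and finally runs the induction on $\dim E_{G}$ in Lemma~\ref{L:12}. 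Your outline does not supply any analogue of the arithmetic density step, and ``contraction plus enough directions'' alone does not produce density in $E_{G}$ without it: knowing that $H_{G}$ is $\Lambda_{G}$-stable and contains some nonzero vector gives $\overline{H_{G}}\supset\overline{\Lambda_{G}}v$ for various $v$, but $\overline{\Lambda_{G}}$ may be a proper closed subset of $\mathbb{R}$ (e.g.\ $\{\pm\lambda^{k}\}\cup\{0\}$), so this yields Cantor-like sets, not full lines.

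Your argument for~(1)(iii) is also incomplete, since it relies on ``an argument analogous to~(ii) inside each fibre''. Concretely, from the quotient picture you get $\overline{G(x)}\subset\overline{\Lambda_{G}}x+E_{G}$, but for the reverse inclusion you need, for each $\lambda\in\Lambda_{G}$, that the \emph{entire} coset $\lambda x+E_{G}$ lies in $\overline{G(x)}$, not just the single point $g(x)$. The paper secures this via Lemmas~\ref{L:13} and~\ref{L:130}: once~(ii) is known, every $(b,\lambda)$ with $b\in E_{G}$ and every translation $T_{b}$ with $b\in E_{G}$ lies in $\overline{G}$, and applying these to $x$ fills each fibre. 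So~(iii) really does hinge on~(ii), as you anticipated, but the passage is not automatic.
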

\medskip

\begin{cor}\label{C:1}Under notations of Theorem ~\ref{T:1}. If
$G\backslash\mathcal{S}_{n}\neq\emptyset$, then:
\begin{itemize}
   \item [(i)] Every orbit in $U$ is minimal in
  $U$.
   \item [(ii)]  $E_{G}$  is a minimal set of \ $G$  in \
$\mathbb{R}^{n}$ \ contained in the closure of every orbit of $G$.
  \item [(iii)] All orbit in $U$ are homeomorphic.
\end{itemize}
\end{cor}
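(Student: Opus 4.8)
The plan is to read off all three assertions from the explicit description of orbit closures in Theorem~\ref{T:1}(1). Throughout I would write $E_{G}=V+a$, where $V$ is the direction vector subspace of the affine subspace $E_{G}$ and $a\in E_{G}$ is the point furnished by Theorem~\ref{T:1}(1)(iii); since $x\in U=\mathbb{R}^{n}\backslash E_{G}$ means $x\notin E_{G}$, the vector $x-a$ does not lie in $V$. The engine behind the whole corollary is an elementary algebraic fact about the closure of the ratio group: because $\Lambda_{G}$ is a multiplicative subgroup of $\mathbb{R}^{*}$ and multiplication and division are continuous, $\overline{\Lambda_{G}}$ is stable under products and under division by any nonzero element, whence $\mu\,\overline{\Lambda_{G}}=\overline{\Lambda_{G}}$ for every $\mu\in\overline{\Lambda_{G}}\backslash\{0\}$. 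I would isolate and prove this equality first.

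For (i), fix $x\in U$ and $w\in\overline{G(x)}\cap U$. By Theorem~\ref{T:1}(1)(iii), $\overline{G(x)}=\overline{\Lambda_{G}}(x-a)+E_{G}$, so $w=\mu(x-a)+v_{0}+a$ for some $\mu\in\overline{\Lambda_{G}}$ and $v_{0}\in V$; as $w\notin E_{G}$ and $x-a\notin V$, necessarily $\mu\neq 0$. Applying Theorem~\ref{T:1}(1)(iii) to $w$ and using $w-a=\mu(x-a)+v_{0}$ together with the $V$-invariance of $E_{G}$ gives $\overline{G(w)}=\mu\,\overline{\Lambda_{G}}(x-a)+E_{G}$. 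The key equality $\mu\,\overline{\Lambda_{G}}=\overline{\Lambda_{G}}$ then yields $\overline{G(w)}=\overline{G(x)}$, hence a fortiori $\overline{G(w)}\cap U=\overline{G(x)}\cap U$, which is exactly minimality of the orbit in $U$.

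For (ii), note that $E_{G}$ is closed (an affine subspace) and $G$-invariant by Theorem~\ref{T:1}(1)(i), while Theorem~\ref{T:1}(1)(ii) gives $\overline{G(x)}=E_{G}$ for every $x\in E_{G}$; thus any nonempty closed $G$-invariant $F\subseteq E_{G}$ contains some $x$, whence $E_{G}=\overline{G(x)}\subseteq F$, forcing $F=E_{G}$, so $E_{G}$ is minimal. That $E_{G}$ lies in every orbit closure follows because $0\in\overline{\Lambda_{G}}$ (Theorem~\ref{T:1}(1)(i)): taking the coefficient $\lambda=0$ in $\overline{G(x)}=\overline{\Lambda_{G}}(x-a)+E_{G}$ shows $E_{G}\subseteq\overline{G(x)}$ for $x\in U$, and the case $x\in E_{G}$ is (ii) itself. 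For (iii), fix $x,y\in U$ and define $h(\lambda(x-a)+v+a)=\lambda(y-a)+v+a$; since $x-a\notin V$, the pair $(\lambda,v)\in\mathbb{R}\times V$ is uniquely determined by the point, so $h$ is a well-defined bijection $\overline{G(x)}\to\overline{G(y)}$ which, being the restriction of an affine isomorphism of the ambient affine spans (it sends $x-a\mapsto y-a$, fixes $V$ and fixes $a$), is a homeomorphism; a short computation shows moreover $h(f(x))=f(y)$ for each $f\in G$, so $h$ also carries the orbit $G(x)$ onto $G(y)$.

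The main obstacle is the algebraic lemma $\mu\,\overline{\Lambda_{G}}=\overline{\Lambda_{G}}$ for $\mu\in\overline{\Lambda_{G}}\backslash\{0\}$ underlying the first paragraph: everything in (i) hinges on it, and one must handle the case $\mu\in\overline{\Lambda_{G}}\backslash\Lambda_{G}$, where multiplication by $\mu$ is \emph{not} a group automorphism of $\Lambda_{G}$, by a limiting argument that exploits the continuity of division by a fixed nonzero limit. The remaining verifications---uniqueness of the $(\lambda,v)$ decomposition and the identity $h(f(x))=f(y)$---are routine once the formula of Theorem~\ref{T:1} is in hand.
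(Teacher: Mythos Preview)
Your proposal is correct and follows essentially the same approach as the paper: part (i) is the content of Lemma~\ref{L:15} (same decomposition $w=\mu(x-a)+v_0+a$ and same use of $\mu\,\overline{\Lambda_G}=\overline{\Lambda_G}$), part (ii) is deduced from $0\in\overline{\Lambda_G}$ and Theorem~\ref{T:1}(1)(ii) exactly as the paper does, and your map $h$ in (iii) is precisely the paper's $\varphi$ from the proof of Corollary~\ref{C:1}(iii), up to the translation by $a$. If anything you are slightly more careful than the paper, which writes ``$\alpha\in\Lambda_G$'' in Lemma~\ref{L:15} when $\alpha\in\overline{\Lambda_G}\backslash\{0\}$ is what is actually known; your remark that $\overline{\Lambda_G}\backslash\{0\}$ is itself a multiplicative subgroup (closure of a subgroup in the topological group $\mathbb{R}^*$) is the clean way to close that gap.
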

\medskip

 \begin{cor} \label{C:2} Let $G$  be a non abelian subgroup of  $\mathcal{H}(n,
 \mathbb{R})$.  Then:
\begin{itemize}
  \item [(i)] If  $G\backslash \mathcal{S}_{n}\neq \emptyset$,
  then $G$ has no periodic orbit. Moreover, if $G$ is countable
  then it has no closed orbit.
  \item [(ii)] If  $G\subset \mathcal{S}_{n}$,
  then every orbit of $G$ is minimal in $\mathbb{R}^{n}$.
\end{itemize}
\end{cor}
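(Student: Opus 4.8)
The plan is to read both claims off the structural description in Theorem~\ref{T:1}, together with Corollary~\ref{C:1}; no new dynamics is needed, only cardinality and subgroup bookkeeping.

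For part (i) I would use a single uniform fact: by Corollary~\ref{C:1}(ii) the affine subspace $E_{G}$ is contained in $\overline{G(x)}$ for every $x\in\mathbb{R}^{n}$, and by Theorem~\ref{T:1}(1)(i) it has dimension $p\geq 1$, hence is uncountable. Consequently $\overline{G(x)}$ is uncountable for every $x$. A periodic orbit is in particular finite, and a finite set is closed; so if some $G(x)$ were finite we would get $\overline{G(x)}=G(x)$ finite, contradicting uncountability. Thus $G$ has no periodic orbit. If in addition $G$ is countable, then each orbit $G(x)$ is countable while $\overline{G(x)}$ is uncountable, so $G(x)\neq\overline{G(x)}$ and no orbit is closed. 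If one prefers to avoid Corollary~\ref{C:1}, the inclusion $E_{G}\subseteq\overline{G(x)}$ also follows directly from Theorem~\ref{T:1}: it is an equality for $x\in E_{G}$ by (1)(ii), and for $x\in U$ it comes from $0\in\overline{\Lambda_{G}}$, taking the scalar $\lambda=0$ in $\overline{G(x)}=\overline{\Lambda_{G}}(x-a)+E_{G}$.

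For part (ii) I would apply Theorem~\ref{T:1}(2): $H_{G}$ is a closed subgroup of $\mathbb{R}^{n}$ and $\overline{G(x)}=(x+H_{G})\cup(-x+a+H_{G})$ for every $x$. Since $U=\mathbb{R}^{n}$ is open and $G$-invariant, the orbit $G(x)$ is minimal in $\mathbb{R}^{n}$ exactly when $\overline{G(w)}=\overline{G(x)}$ for every $w\in\overline{G(x)}$. I would take such a $w$ and split into the two cases permitted by the description. If $w=x+h$ with $h\in H_{G}$, then using $h+H_{G}=H_{G}=-h+H_{G}$ one gets $w+H_{G}=x+H_{G}$ and $-w+a+H_{G}=-x+a+H_{G}$, so $\overline{G(w)}=\overline{G(x)}$. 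If instead $w=-x+a+h$ with $h\in H_{G}$, the same group identities give $w+H_{G}=-x+a+H_{G}$ and $-w+a+H_{G}=x+H_{G}$, so again $\overline{G(w)}=\overline{G(x)}$. Hence every orbit is minimal in $\mathbb{R}^{n}$.

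The main point requiring care is conceptual rather than computational: one must fix the convention that a periodic orbit means a finite orbit, after which the whole of (i) collapses to the uncountability of $E_{G}$, i.e. to the inequality $p\geq 1$ supplied by Theorem~\ref{T:1}. Everything else, namely the coset manipulations in (ii) and the cardinality comparison in the countable case, is routine once one exploits that $H_{G}$ is a subgroup and that an affine subspace of positive dimension is uncountable.
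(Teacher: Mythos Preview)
Your proof is correct and follows essentially the same route as the paper's. For (i) the paper likewise invokes Corollary~\ref{C:1}(ii) together with $\dim E_{G}\geq 1$ to rule out periodic orbits, and then the countability comparison for closed orbits; for (ii) the paper uses Proposition~\ref{p:3} (equivalently Theorem~\ref{T:1}(2)) and the same coset manipulation $b+\overline{G_{1}(0)}=\overline{G_{1}(0)}$, treating the second case $w\in -x+a+H_{G}$ by the identical computation you wrote out explicitly.
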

\medskip

 \begin{cor}\label{C:3} Let $G$  be a non abelian subgroup of  $\mathcal{H}(n, \mathbb{R})$
 such that  $G\backslash \mathcal{S}_{n}\neq \emptyset$. Then the following assertions are equivalents:
\begin{itemize}
\item [(1)] $G$ has a dense orbit in in $\mathbb{R}^{n}$.
  \item [(2)] Every orbit
of  $U$  is dense in
$\mathbb{R}^{n}$.
  \item [(3)] $G$ satisfies one of the following:
\begin{itemize}
  \item [(i)] $E_{G}=\mathbb{R}^{n}$
  \item [(ii)] $dim(E_{G})=n-1$ and  $\overline{\Lambda_{G}}=\mathbb{R}$.
    \end{itemize}
\end{itemize}
\end{cor}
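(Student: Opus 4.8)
The plan is to prove the cyclic chain of implications $(1)\Rightarrow(3)\Rightarrow(2)\Rightarrow(1)$, using throughout the description of orbit closures supplied by Theorem~\ref{T:1}. The whole argument splits along whether $E_{G}=\mathbb{R}^{n}$ or $\dim E_{G}=p<n$ (equivalently, whether $U=\emptyset$ or $U\neq\emptyset$). Write $E_{G}=a+\vec{E}$, where $\vec{E}$ is the $p$-dimensional direction of $E_{G}$ and $a\in E_{G}$ is the point furnished by Theorem~\ref{T:1}(iii).

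The computational heart of the corollary, which I would isolate first as a claim, is the following: for a fixed $x\in U$ one has $\overline{G(x)}=\mathbb{R}^{n}$ if and only if $p=n-1$ and $\overline{\Lambda_{G}}=\mathbb{R}$. To see this I would start from the formula $\overline{G(x)}=\overline{\Lambda_{G}}(x-a)+E_{G}$ of Theorem~\ref{T:1}(iii), translate by $-a$ and set $v=x-a$, so that $\overline{G(x)}=\mathbb{R}^{n}$ becomes $\overline{\Lambda_{G}}\,v+\vec{E}=\mathbb{R}^{n}$. Since $x\in U$ we have $v\notin\vec{E}$, so projecting to the quotient $\mathbb{R}^{n}/\vec{E}$ (of dimension $n-p$) the left-hand side becomes $\overline{\Lambda_{G}}\cdot\pi(v)$ with $\pi(v)\neq 0$. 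As $\overline{\Lambda_{G}}\cdot\pi(v)$ is contained in the single line $\mathbb{R}\,\pi(v)$, filling the quotient forces $n-p\le 1$, hence $p=n-1$; and it then forces $\overline{\Lambda_{G}}\,\pi(v)=\mathbb{R}\,\pi(v)$, i.e. $\overline{\Lambda_{G}}=\mathbb{R}$. Conversely, if $p=n-1$ and $\overline{\Lambda_{G}}=\mathbb{R}$, then $\overline{\Lambda_{G}}\,v=\mathbb{R}v$ and, since $v\notin\vec{E}$ and $\dim\vec{E}=n-1$, we obtain $\mathbb{R}v+\vec{E}=\mathbb{R}^{n}$. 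Note this condition does not depend on the chosen $x\in U$, which is what underlies the equivalence of $(1)$ and $(2)$.

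With this claim in hand the three implications are short. For $(1)\Rightarrow(3)$: if $E_{G}=\mathbb{R}^{n}$ then $(3)(i)$ holds; otherwise a dense orbit cannot lie inside $E_{G}$, since there $\overline{G(x)}=E_{G}\neq\mathbb{R}^{n}$ by Theorem~\ref{T:1}(ii), so it must be some $G(x)$ with $x\in U$, and the claim yields $(3)(ii)$. For $(3)\Rightarrow(2)$: under $(3)(i)$ we have $U=\emptyset$ and $(2)$ holds vacuously, while under $(3)(ii)$ the claim shows $\overline{G(x)}=\mathbb{R}^{n}$ for every $x\in U$. For $(2)\Rightarrow(1)$: if $U\neq\emptyset$ any orbit in $U$ is already dense, and if $U=\emptyset$ then $E_{G}=\mathbb{R}^{n}$ and Theorem~\ref{T:1}(ii) gives $\overline{G(x)}=\mathbb{R}^{n}$ for all $x$.

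The only genuinely delicate point is the line-filling step: one must verify that $\overline{\Lambda_{G}}\,\pi(v)=\mathbb{R}\,\pi(v)$ really forces $\overline{\Lambda_{G}}=\mathbb{R}$ and not merely a half-line. I would handle this by recalling the structure of $\overline{\Lambda_{G}}$: since $\Lambda_{G}$ is a multiplicative subgroup of $\mathbb{R}^{*}$ with $0\in\overline{\Lambda_{G}}$ by Theorem~\ref{T:1}(i), its closure equals $\mathbb{R}$ precisely when its positive part is dense in $(0,\infty)$ \emph{and} $\Lambda_{G}$ contains a negative ratio; in every other case $\overline{\Lambda_{G}}$ is contained in a half-line or is discrete-together-with-$0$, so $\overline{\Lambda_{G}}\,\pi(v)$ cannot be the whole line. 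This is the step I expect to require the most care to state cleanly.
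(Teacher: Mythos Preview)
Your proof is correct and follows essentially the same route as the paper's: both reduce via Theorem~\ref{T:1} to analysing when $\overline{\Lambda_{G}}(x-a)+E_{G}=\mathbb{R}^{n}$, and your quotient argument is just a repackaging of the paper's direct dimension count (after translating so that $E_{G}$ is linear). Your ``delicate point'' is in fact a non-issue: once $\pi(v)\neq 0$, the map $t\mapsto t\,\pi(v)$ is a bijection $\mathbb{R}\to\mathbb{R}\,\pi(v)$, so $\overline{\Lambda_{G}}\,\pi(v)=\mathbb{R}\,\pi(v)$ immediately forces $\overline{\Lambda_{G}}=\mathbb{R}$ without any appeal to the structure of closed multiplicative subgroups.
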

\medskip

\begin{rem} \label{r:1} Let  $G$ be a non abelian subgroup of
$\mathcal{H}(n,\mathbb{R})$  such that  $G\backslash
\mathcal{S}_{n}\neq \emptyset$.
\begin{itemize}
  \item [(i)] Suppose that $dim(E_{G})=n-1$ and there exist  $\lambda,\mu\in\Lambda_{G}$  such that $\lambda\mu<0$ and $\frac{log|\lambda|}{log|\mu|}\notin\mathbb{Q}$,
  then $G$ has a dense orbit. (Indeed, in Lemma ~\ref{L:5} we will prove that $\overline{\Lambda_{G}}=\mathbb{R}$ and we apply Corollary
  ~\ref{C:3},(3).(ii)).
  \item [(ii)] If
$\overline{\Lambda_{G}}=\mathbb{R}$  and   $dim(E_{G})<n-1$,  then by Theorem 1.1.(ii),
$G$ has no dense
  orbit and every orbit of  $U$
  is dense in an affine subspace of  $\mathbb{R}^{n}$  with
  dimension  $dim(E_{G})+1$.
  \end{itemize}
\end{rem}
\medskip

\begin{cor}\label{C:4} Let  $G$ be a non abelian subgroup of
$\mathcal{S}_{n}$.  Then  the
following assertions are equivalent:
\begin{itemize}
\item [(i)] $G$ has a dense orbit.
\item [(ii)] Every orbit of  $G$ has a dense orbit.
 \item [(iii)] The orbit  $G(0)$  is dense in    $\mathbb{R}^{n}$.
 \item [(iv)]  $H_{G}$ is dense in  $\mathbb{R}^{n}$.
    \end{itemize}
\end{cor}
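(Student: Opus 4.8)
The plan is to deduce everything from the structural description in Theorem~\ref{T:1}(2). Since $G\subset\mathcal{S}_{n}$ is non abelian, that part of the theorem provides a closed subgroup $H_{G}$ of $\mathbb{R}^{n}$ and a point $a\in\mathbb{R}^{n}$ such that
$$\overline{G(x)}=(x+H_{G})\cup(-x+a+H_{G})\qquad\text{for every }x\in\mathbb{R}^{n}.$$
Thus each orbit closure is a union of two cosets of the closed subgroup $H_{G}$, and the whole corollary will follow once I show that the presence of a single dense orbit forces $H_{G}=\mathbb{R}^{n}$, and that conversely $H_{G}=\mathbb{R}^{n}$ makes every orbit dense.

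First I would record the key topological fact: a proper closed subgroup $H$ of $\mathbb{R}^{n}$ is nowhere dense. Indeed, a subgroup with nonempty interior is open (each of its points is interior, by translating a fixed interior neighbourhood), and an open subgroup of the connected group $\mathbb{R}^{n}$ must be all of $\mathbb{R}^{n}$; hence a proper closed subgroup has empty interior. Consequently every coset of such an $H$ is a nowhere dense closed set, and a union of two nowhere dense closed sets is again closed with empty interior, so it cannot be dense. Applying this to the displayed formula: if $H_{G}\neq\mathbb{R}^{n}$, then $\overline{G(x)}$ is a proper closed subset of $\mathbb{R}^{n}$ for every $x$, so no orbit is dense. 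Contrapositively, the existence of one dense orbit forces $H_{G}=\mathbb{R}^{n}$.

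With this in hand I would close the equivalences cyclically around the common condition $H_{G}=\mathbb{R}^{n}$. For (i)$\Rightarrow$(iv), a dense orbit forces $H_{G}=\mathbb{R}^{n}$ by the previous paragraph, and since $H_{G}$ is closed this is exactly the density asserted in (iv). For (iv)$\Rightarrow$(ii), if $H_{G}=\mathbb{R}^{n}$ then $x+H_{G}=\mathbb{R}^{n}$, so $\overline{G(x)}=\mathbb{R}^{n}$ for every $x$, i.e.\ every orbit is dense. For (ii)$\Rightarrow$(iii) I simply specialize to $x=0$, and (iii)$\Rightarrow$(i) is immediate since $G(0)$ is an orbit. (One could just as well route (iii)$\Rightarrow$(iv) directly, density of $G(0)$ again forcing $H_{G}=\mathbb{R}^{n}$.)

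The argument is essentially bookkeeping on top of Theorem~\ref{T:1}(2); the only genuine content is the observation that a union of two cosets of a proper closed subgroup of $\mathbb{R}^{n}$ is nowhere dense, which rests on the connectedness of $\mathbb{R}^{n}$. I do not expect a serious obstacle here. The one point requiring care is that (iv) is phrased as density of $H_{G}$ rather than equality; these coincide precisely because Theorem~\ref{T:1}(2) guarantees that $H_{G}$ is \emph{closed}, and it is this closedness that makes the entire chain collapse to the single condition $H_{G}=\mathbb{R}^{n}$.
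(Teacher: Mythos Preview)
Your proof is correct and rests on the same core idea as the paper's: the orbit-closure formula from Theorem~\ref{T:1}(2) together with the fact that a closed subgroup of $\mathbb{R}^{n}$ with nonempty interior must be all of $\mathbb{R}^{n}$ (the paper isolates this as Lemma~\ref{L:17}). The only organizational difference is that the paper handles the equivalence of (i), (ii), and (iii) by invoking the minimality statement of Corollary~\ref{C:2}(ii), and then treats (iii)$\Leftrightarrow$(iv) separately via Proposition~\ref{p:3} and Lemma~\ref{L:17}; you instead route all four conditions uniformly through the single criterion $H_{G}=\mathbb{R}^{n}$, using that a union of two cosets of a proper closed subgroup is nowhere dense. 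Your route is marginally more self-contained (it does not need Corollary~\ref{C:2}), but both arguments are short and the substance is the same.
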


\begin{rem}\label{r:2} Let $\mathcal{H}_{n}=\left\{\
f:x\ \longmapsto\ \alpha(x-a)+a;\ a\in\mathbb{R}^{n},\
\alpha\in\mathbb{R}^{*} \right\}$ be the set of all affine
homotheties of $\mathbb{R}^{n}$. Then:
\begin{itemize}
  \item [(i)]  $\mathcal{H}(n,\mathbb{R})=\mathcal{H}_{n}\cup
\mathcal{T}_{n}(\mathbb{R})$.
  \item [(ii)] $\mathcal{H}_{n}$ is not a group. (Indeed;
$\mathcal{H}_{n}\cap
\mathcal{T}_{n}(\mathbb{R})=\{id_{\mathbb{R}^{n}}\}$. \ For
$f=(a,2)$  and $g=\left(2a,\frac{1}{2}\right)$  one has
$f\circ g =T_{a}\in \mathcal{T}_{n}(\mathbb{R})$,  with
$T_{a}:x\longmapsto x+a$.)
  \item [(iii)] There exists a subgroup of $\mathcal{S}_{n}$, having two orbits non homeomorphic. (See example 6.4).
\end{itemize}
\end{rem}
\medskip

For $n=1$, remark that any subgroup of $\mathcal{H}(1,\mathbb{R})$
is a group of affine maps of $\mathbb{R}$. As consequence for
Theorem ~\ref{T:1}, we establish the following  strong result:

\begin{cor}\label{C:5}Let  $G$ be a non abelian group of affine maps of $\mathbb{R}$.
\begin{itemize}
  \item [(i)] If
$G\backslash\mathcal{S}_{1}\neq\emptyset$ then every orbit of $G$
is dense in $\mathbb{R}$.
  \item [(ii)] If $G\subset\mathcal{S}_{1}$ then all orbits
  of $G$ are dense in $\mathbb{R}$ or all orbits are closed and discrete.
  \end{itemize}
\end{cor}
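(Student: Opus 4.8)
The plan is to read off both statements as direct specializations of Theorem~\ref{T:1} to the line, exploiting the extreme rigidity of $\mathbb{R}$: the only affine subspace of $\mathbb{R}$ of positive dimension is $\mathbb{R}$ itself, and the only closed subgroups of $(\mathbb{R},+)$ are $\{0\}$, the discrete lattices $c\mathbb{Z}$ with $c>0$, and $\mathbb{R}$. Each of the two cases of the corollary matches one of the two cases of Theorem~\ref{T:1}, so no new dynamics needs to be developed; the work is entirely in translating the conclusions of Theorem~\ref{T:1} through this classification.

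For part (i), assume $G\setminus\mathcal{S}_1\neq\emptyset$, which places us in case $(1)$ of Theorem~\ref{T:1}. Part $(i)$ of that theorem asserts that $E_G$ is a $G$-invariant affine subspace of $\mathbb{R}$ of dimension $p\geq 1$; since $n=1$, this forces $E_G=\mathbb{R}$. Then part $(ii)$ gives $\overline{G(x)}=E_G=\mathbb{R}$ for every $x\in E_G=\mathbb{R}$, i.e. every orbit of $G$ is dense. Note that $U=\mathbb{R}\setminus E_G=\emptyset$, so part $(iii)$ is vacuous and nothing remains to be checked off $E_G$.

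For part (ii), assume $G\subset\mathcal{S}_1$, which places us in case $(2)$ of Theorem~\ref{T:1}: there is a closed subgroup $H_G\leq\mathbb{R}$ and a point $a\in\mathbb{R}$ with $\overline{G(x)}=(x+H_G)\cup(-x+a+H_G)$ for every $x$. I split according to the classification of closed subgroups of $\mathbb{R}$. If $H_G=\mathbb{R}$, then each of the two cosets already fills the line, so $\overline{G(x)}=\mathbb{R}$ for every $x$ and all orbits are dense. Otherwise $H_G$ is discrete, being either $\{0\}$ or some lattice $c\mathbb{Z}$, and then $\overline{G(x)}$ is a union of two cosets of $H_G$, hence a set that is $c$-periodic with at most two points per period; in particular it is uniformly discrete and closed.

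The one genuine obstacle is to upgrade the description of $\overline{G(x)}$ in this discrete case to the stronger statement that $G(x)$ \emph{itself} is closed and discrete, since a discrete subset of $\mathbb{R}$ need not be closed in general. The key observation is that here the closure is itself uniformly discrete: every point of $\overline{G(x)}$ is isolated in $\overline{G(x)}$. Thus if $d\in\overline{G(x)}$, a small neighbourhood of $d$ meets $\overline{G(x)}$ only in $d$, and as $d$ is a limit point of $G(x)$ this forces $d\in G(x)$; hence $G(x)=\overline{G(x)}$ is closed and discrete. Finally, which alternative occurs is determined solely by $H_G$, a subgroup attached to $G$ and independent of $x$, so the dichotomy holds uniformly across all orbits, as asserted.
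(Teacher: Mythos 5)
Your proposal is correct and follows essentially the same route as the paper: both specialize Theorem~\ref{T:1} to $n=1$, forcing $E_{G}=\mathbb{R}$ in case (i) via $\dim E_{G}\geq 1$, and in case (ii) splitting on whether the closed subgroup $H_{G}\leq\mathbb{R}$ is all of $\mathbb{R}$ or discrete. The only difference is that you explicitly verify that in the discrete case the orbit $G(x)$ itself (not merely its closure) is closed and discrete, a small point the paper asserts without argument; your justification of it is sound.
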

\bigskip

This paper is organized as follows: In Section 2, we introduce some preliminaries Lemmas. Section 3 is devoted to given some results in the case
 $G\backslash \mathcal{S}_{n}\neq\emptyset$. Results in the case when $G$ is a subgroup of $\mathcal{S}_{n}$
are given in Section 4. In Section 5, we prove
Theorem ~\ref{T:1}, Corollaries ~\ref{C:1}, ~\ref{C:2}, ~\ref{C:3}, ~\ref{C:4} and ~\ref{C:5}. In Section 7, we give four examples.
\medskip
\medskip

\bigskip
\section{\textbf{Preliminaries Lemmas}}
\
\\
Recall that  $\mathrm{Fix}(f):=\{x\in \mathbb{R}^{n}: \
f(x)=x\}$,  for every $f\in \mathcal{H}(n,
\mathbb{R})$.\
\\
So $$\mathrm{Fix}(f):=\left\{\begin{array}{cc}
                               \emptyset, & \mathrm{if}\ \ f\in\mathcal{T}_{n}\ \ \ \ \ \ \ \ \ \ \ \ \ \ \ \ \ \ \ \ \ \ \ \\
                               \left\{\frac{a}{2}\right\}, & \mathrm{if} \ \  f=(a,\varepsilon)\in\mathcal{S}_{n}\backslash \mathcal{T}_{n}\ \ \ \ \ \ \  \\
                               \{a\}, &  \ \mathrm{if} \ \ f=(a,\lambda)\in \mathcal{H}(n,
\mathbb{R})\backslash \mathcal{S}_{n}
                             \end{array}
\right.$$\

\begin{lem}\label{L:00} Let $G$ be a non abelian subgroup of $\mathcal{H}(n, \mathbb{R})$.
 The set $\Lambda_{G}$ is a subgroup of $\mathbb{R}^{*}$. Moreover, if $G\backslash \mathcal{S}_{n}\neq\emptyset$,
 then $0\in \overline{\Lambda_{G}}$.
 \end{lem}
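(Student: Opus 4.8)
The plan is to treat the two assertions separately, both through the ``linear part'' (ratio) map. First I would introduce $\rho:\mathcal{H}(n,\mathbb{R})\To\mathbb{R}^{*}$ sending $f:x\longmapsto\lambda x+a$ to its ratio $\lambda$; this is well defined, since $\lambda$ is exactly the scalar linear part of the affine map $f$ and is the same $\lambda$ appearing in the notation $f=(a,\lambda)$. The key computation is that $\rho$ is a group homomorphism: if $f:x\longmapsto\lambda x+a$ and $g:x\longmapsto\mu x+b$, then $f\circ g:x\longmapsto\lambda\mu\,x+(\lambda b+a)$, so $\rho(f\circ g)=\lambda\mu=\rho(f)\rho(g)$, and $\rho(id_{\mathbb{R}^{n}})=1$. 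Since $G$ is a subgroup of $\mathcal{H}(n,\mathbb{R})$ and $\Lambda_{G}=\rho(G)$ is the image of a subgroup under a homomorphism, it is a subgroup of $\mathbb{R}^{*}$. (This uses only that $G$ is a subgroup, not that it is non abelian.)

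For the second assertion I would exploit the hypothesis $G\backslash\mathcal{S}_{n}\neq\emptyset$. Because $\mathcal{S}_{n}$ is precisely the set of elements of $\mathcal{H}(n,\mathbb{R})$ whose ratio lies in $\{-1,1\}$, any $f\in G\backslash\mathcal{S}_{n}$ has ratio $\lambda_{0}=\rho(f)\in\Lambda_{G}$ with $\abs{\lambda_{0}}\neq 1$ (and $\lambda_{0}\neq 0$, as $\lambda_{0}\in\mathbb{R}^{*}$). Since $\Lambda_{G}$ is a subgroup by the first part, it contains every integer power $\lambda_{0}^{k}$, $k\in\mathbb{Z}$, and $\abs{\lambda_{0}^{k}}=\abs{\lambda_{0}}^{k}$. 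Hence if $\abs{\lambda_{0}}<1$ the sequence $(\lambda_{0}^{k})_{k\geq 1}$ tends to $0$, while if $\abs{\lambda_{0}}>1$ the sequence $(\lambda_{0}^{-k})_{k\geq 1}$ tends to $0$; in either case I obtain a sequence of elements of $\Lambda_{G}$ converging to $0$, whence $0\in\overline{\Lambda_{G}}$.

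There is essentially no serious obstacle here: the statement reduces to the elementary facts that the ratio map is multiplicative and that a geometric ratio of modulus different from $1$ has powers accumulating at $0$. The only points needing a little care are verifying that the ratio genuinely defines a homomorphism (the center $a$ of $f=(a,\lambda)$ plays no role once one passes to the linear part) and correctly translating the hypothesis $G\backslash\mathcal{S}_{n}\neq\emptyset$ into the existence of a ratio of modulus $\neq 1$.
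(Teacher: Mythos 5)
Your proof is correct and follows essentially the same route as the paper: the paper verifies the subgroup criterion directly by computing $f\circ g^{-1}$ (which is just the ratio homomorphism argument unpacked), and obtains $0\in\overline{\Lambda_{G}}$ from the powers of a ratio of modulus $\neq 1$, exactly as you do. If anything, your version is slightly more careful, since you distinguish $\abs{\lambda_{0}}<1$ from $\abs{\lambda_{0}}>1$ when choosing which powers tend to $0$, whereas the paper's statement of that limit is loosely worded.
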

\medskip

\begin{proof} Since $id_{\mathbb{R}^{n}}\in G$, so $1\in \Lambda_{G}$. Let $\lambda, \mu\in \Lambda_{G}$
and $f,g\in G$ defined by $f:x\longmapsto \lambda x+a$, and $g:x\longmapsto \mu x+b$,
$x\in \mathbb{R}^{n}$, so $f\circ g^{-1}(x)=f\left(\frac{x}{\mu}-\frac{b}{\mu}\right)=\frac{\lambda}{\mu}x-\frac{\lambda b}{\mu}+a$.
 Hence $\frac{\lambda }{\mu}\in \Lambda_{G}$.\ Moreover, if $G\backslash \mathcal{S}_{n}\neq\emptyset$, $\Gamma_{G}\backslash\{-1,1\}\neq\emptyset$.
 So $\underset{m\to \pm\infty}{lim}\lambda^{m}=0$, for any $\lambda\in\Gamma_{G}$. It follows that $0\in \overline{\Lambda_{G}}$.
  This proves the Lemma.
\end{proof}
\
\\
\begin{lem}\label{L:6}\ \begin{itemize}
\item [(i)]Let $f=(a,\alpha),\ g=(b, \beta)\in\mathcal{H}(n, \mathbb{R})\backslash\mathcal{S}_{n}$ then
$f\circ g=g\circ f$ if and only if $a=b$ or $\alpha=1$ or
$\beta=1$.\
\item [(ii)] If $Fix(f)=Fix(g)$ then $f\circ g=g\circ f$.\
\item [(iii)]  Let $G$ be a non abelian subgroup of $\mathcal{H}(n, \mathbb{R})$ such that $G\backslash \mathcal{S}_{n}\neq\emptyset$, then there exist
$f=(a,\alpha)$, $g=(b, \beta)\in G$ such that $a\neq b$. Moreover,  there exist
 $a,b\in\Gamma_{G}$ such that $a\neq b$.
\end{itemize}
\end{lem}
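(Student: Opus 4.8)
The plan is to treat the three parts in order: (i) is a direct commutator computation, (ii) reduces to (i) together with the translation case (or, more uniformly, to a normal-form argument), and (iii) combines the non-abelianness of $G$ with a conjugation argument.

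For (i), I would write both maps in centered form: since $f,g\in\mathcal{H}(n,\mathbb{R})\backslash\mathcal{S}_{n}$ their centers are fixed points, so $f(x)=\alpha(x-a)+a=\alpha x+(1-\alpha)a$ and $g(x)=\beta x+(1-\beta)b$. Composing both ways, the linear parts coincide (both equal $\alpha\beta\,x$), so commutation is controlled entirely by the constant terms. A short computation shows that the constant term of $f\circ g$ minus that of $g\circ f$ equals $(1-\alpha)(1-\beta)(a-b)$. Hence $f\circ g=g\circ f$ if and only if $(1-\alpha)(1-\beta)(a-b)=0$, which is exactly the stated disjunction $a=b$ or $\alpha=1$ or $\beta=1$.

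For (ii), I would split on whether $\mathrm{Fix}(f)$ is empty. If $\mathrm{Fix}(f)=\mathrm{Fix}(g)=\emptyset$, then by the description of $\mathrm{Fix}$ both $f$ and $g$ are translations, and translations commute. Otherwise $\mathrm{Fix}(f)=\mathrm{Fix}(g)=\{p\}$ is a single common point; any element of $\mathcal{H}(n,\mathbb{R})$ fixing $p$ must have the form $x\mapsto\lambda(x-p)+p$ (solving the fixed-point equation for the translation part), so $f(x)=\lambda(x-p)+p$ and $g(x)=\mu(x-p)+p$, and composing in either order yields $\lambda\mu(x-p)+p$, giving $f\circ g=g\circ f$.

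For (iii), the key point is to prove that the centers of the homotheties in $G$ are not all equal, i.e.\ that $\Gamma_{G}$ has at least two elements; establishing this \emph{moreover} statement immediately yields the first assertion, since two distinct points of $\Gamma_{G}$ are the centers $a\neq b$ of two homotheties $f=(a,\alpha),g=(b,\beta)\in G\backslash\mathcal{S}_{n}\subset G$. I would argue by contradiction: suppose $\Gamma_{G}=\{c\}$, so every element of $G\backslash\mathcal{S}_{n}$ has center $c$. Fix a homothety $h=(c,\lambda)\in G$ with $\lambda\notin\{-1,1\}$, which exists because $G\backslash\mathcal{S}_{n}\neq\emptyset$. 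For any $g\in G$, the conjugate $g\circ h\circ g^{-1}$ again has scalar linear part $\lambda\notin\{-1,1\}$ and fixed point $g(c)$, so it lies in $G\backslash\mathcal{S}_{n}$ and therefore $g(c)\in\Gamma_{G}=\{c\}$, forcing $g(c)=c$. Thus every element of $G$ fixes $c$; conjugating $G$ by the translation $T_{c}$ turns it into a group of linear maps $x\mapsto\mu x$, all of which commute, so $G$ is abelian, contradicting the hypothesis. Hence $\Gamma_{G}$ contains two distinct points.

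The step I expect to be the main obstacle is the conjugation argument in (iii): I must check carefully that conjugating the homothety $h=(c,\lambda)$ by an arbitrary $g\in G$ really produces a genuine homothety (ratio still $\lambda\notin\{-1,1\}$, hence an element of $G\backslash\mathcal{S}_{n}$) whose fixed point is exactly $g(c)$, so that this center legitimately belongs to $\Gamma_{G}$. Once that is pinned down, the remaining pieces — the explicit commutator identity in (i) and the normal-form computation in (ii) — are routine.
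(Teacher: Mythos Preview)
Your proof is correct and close to the paper's, with only organizational differences worth noting. In (i) both you and the paper compute the same commutator and factor it as $(1-\alpha)(1-\beta)(a-b)$. In (ii) the paper splits the non-empty case into four sub-cases according to whether each of $f,g$ lies in $\mathcal{H}(n,\mathbb{R})\backslash\mathcal{S}_n$ or in $\mathcal{S}_n\backslash\mathcal{T}_n$; your single observation that any map in $\mathcal{H}(n,\mathbb{R})$ with fixed point $p$ has the form $x\mapsto\lambda(x-p)+p$ handles all four at once and is cleaner. In (iii) the paper argues directly rather than by contradiction: it fixes $h=(c,\lambda)\in G\backslash\mathcal{S}_n$, uses non-abelianness to pick $h'$ with $h\circ h'\neq h'\circ h$, invokes (ii) to get $\mathrm{Fix}(h)\neq\mathrm{Fix}(h')$ and hence $h'(c)\neq c$, and then conjugates to place $h'(c)\in\Gamma_G$. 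Your contrapositive version (assume $\Gamma_G=\{c\}$, conjugate by every $g$ to force $g(c)=c$, conclude $G$ abelian) is the same conjugation mechanism, with the mild bonus that it does not rely on (ii). The step you flagged is fine: writing $g(x)=\mu x+d$ one checks that $g\circ h\circ g^{-1}$ has linear part $\lambda\notin\{-1,1\}$ and fixes $g(c)$, so it is a genuine homothety with center $g(c)\in\Gamma_G$.
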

\medskip

\begin{proof} (i) If $f\circ g(x)=g\circ
f(x)$, for every $x\in \mathbb{R}^{n}$,
\begin{align*}
\mathrm{ then}\ \ \ \ \ \  \ \ \  \lambda(\mu(x-b)+b-a)+a& \
=\mu(\lambda(x-a)+a-b)+b,\\ \mathrm{so}\ \ \ \ \ \  \ \ \  \
-\lambda\mu b+\lambda(b-a)+a& \ =-\mu\lambda a+\mu(a-b)+b,
\end{align*}
 thus
$(a-b)(\lambda\mu-\lambda-\mu+1)=0$. Hence
$(a-b)(\lambda-1)(\mu-1)=0$. This proves the lemma.\
\\
\\
(ii) There are two cases:\
\\
$\bullet$ If $Fix(f)=Fix(g)=\emptyset$ then $f=T_{a}$ and $g=T_{b}$ for some $a,b\in \mathbb{R}^{n}$, so $f\circ g=g\circ f$.
\\
$\bullet$ If $Fix(f)=Fix(g)=a\neq \emptyset$, so $f,g\in \mathcal{H}(n, \mathbb{R})\backslash \mathcal{T}_{n}$, there are four cases:
\\
- $f=(a,\lambda), \ g=(a,\mu)\in \mathcal{H}(n, \mathbb{R})\backslash \mathcal{S}_{n}$, then
$f\circ g(x)=\lambda(\mu(x-a)+a-a)+a=\lambda\mu(x-a)+a=g\circ f(x)$, for every $x\in \mathbb{R}^{n}$.\
\\
-$f=g=(2a,-1)\in \mathcal{S}_{n}\backslash \mathcal{T}_{n}$, so $f\circ g=g\circ f$.\
\\
-$f(2a,-1)\in \mathcal{S}_{n}\backslash \mathcal{T}_{n}$ and $g=(a,\mu)\in \mathcal{H}(n, \mathbb{R})\backslash \mathcal{S}_{n}$, so for every $x\in\mathbb{R}^{n}$,
\begin{align*}
f\circ g(x)& =-(\mu(x-a)+a)+2a  \\
 \mathrm{and} \ \ \ \ \ \ \ \ \    g\circ f(x)& =\mu(-x+2a-a)+a \\
\ \mathrm{so}\ \ \ \ \ \  \ \ \  f\circ g(x)& =g\circ f(x)=-\mu x+(1+\mu)a
\end{align*}
hence \ \ \ $f\circ g=g\circ f$.\
\\
- $f=(a,\lambda)\in \mathcal{H}(n, \mathbb{R})\backslash \mathcal{S}_{n}$ and
$g(2a,-1)\in \mathcal{S}_{n}\backslash \mathcal{T}_{n}$, so as above $f\circ g=g\circ f$. This completes the proof.
\
\\
\\
(iii) Since $G$ is non abelian then the proof of (iii) results from (ii).
\
\\
Moreover, we have $\Gamma_{G}\neq\emptyset$ and let $h=(c,\lambda)\in G\backslash\mathcal{S}_{n}$.
Since $G$ is non abelian then there exists
$h'\in G$ such that $h\circ h'\neq h'\circ h$. By (ii),
   $Fix(h)\neq Fix(h')$, so  $h'(c)\neq c$. Then $h'\circ
   h\circ h'^{-1}=(h'(c), \lambda)\in G\backslash \mathcal{S}_{n}$, hence  $h'(c),c\in\Gamma_{G}$. \
\end{proof}

\medskip

\begin{lem} \label{L:4} Let   $\mathcal{B}=(a_{1},\dots,a_{n})$  be a basis of  $\mathbb{R}^{n}$.
 Then  the smaller affine subspace $\mathcal{A}ff(\mathcal{B})$ of $\mathbb{R}^{n}$  containing
$\{a_{1},\dots,a_{n}\}$ is defined by
 $$\mathcal{A}ff(\mathcal{B}):=\left\{x=\underset{k=1}{\overset{n}{\sum}}\alpha_{k}a_{k}:
  \ \alpha_{k}\in \mathbb{R},\
  \underset{k=1}{\overset{n}{\sum}}\alpha_{k}=1\right\}.$$
\end{lem}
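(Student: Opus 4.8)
The plan is to set $S:=\left\{\sum_{k=1}^{n}\alpha_{k}a_{k}:\ \alpha_{k}\in\mathbb{R},\ \sum_{k=1}^{n}\alpha_{k}=1\right\}$ and to prove the two facts that characterize $\mathcal{A}ff(\mathcal{B})$ as the \emph{smallest} affine subspace containing $\{a_{1},\dots,a_{n}\}$: first, that $S$ is itself an affine subspace containing every $a_{j}$; second, that $S$ is contained in any affine subspace that contains $\{a_{1},\dots,a_{n}\}$. Together these force $S=\mathcal{A}ff(\mathcal{B})$.

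For the first fact, I would eliminate the constraint $\sum_{k}\alpha_{k}=1$ by writing $\alpha_{1}=1-\sum_{k=2}^{n}\alpha_{k}$, which turns a generic element of $S$ into $x=a_{1}+\sum_{k=2}^{n}\alpha_{k}(a_{k}-a_{1})$. This exhibits $S=a_{1}+vect(a_{2}-a_{1},\dots,a_{n}-a_{1})$, hence an affine subspace; since $(a_{1},\dots,a_{n})$ is a basis, the vectors $a_{2}-a_{1},\dots,a_{n}-a_{1}$ are linearly independent, so $S$ has dimension $n-1$. Taking $\alpha_{j}=1$ and $\alpha_{k}=0$ for $k\neq j$ shows $a_{j}\in S$ for each $j$, so $S$ contains $\{a_{1},\dots,a_{n}\}$.

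For the second fact, let $F=E+b$ be any affine subspace containing $a_{1},\dots,a_{n}$. The key step — and the only one requiring a genuine argument — is that an affine subspace is stable under affine combinations: if $y_{1},\dots,y_{m}\in F$ and $\sum_{i}\beta_{i}=1$, then writing $y_{i}=b+v_{i}$ with $v_{i}\in E$ gives $\sum_{i}\beta_{i}y_{i}=b+\sum_{i}\beta_{i}v_{i}\in b+E=F$. Applying this with $y_{k}=a_{k}$ shows that every element $\sum_{k}\alpha_{k}a_{k}$ with $\sum_{k}\alpha_{k}=1$ lies in $F$, i.e. $S\subset F$. Thus $S$ is contained in every affine subspace containing $\{a_{1},\dots,a_{n}\}$, and combined with the first fact it is the smallest such subspace, which is precisely $\mathcal{A}ff(\mathcal{B})$.

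I expect the main (and essentially only) obstacle to be the stability of affine subspaces under affine combinations invoked in the minimality step; once the affine constraint $\sum_{k}\alpha_{k}=1$ is eliminated, the remaining verifications are direct computations.
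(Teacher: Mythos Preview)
Your proof is correct and follows essentially the same route as the paper: both identify the set $S$ of affine combinations with $a_{1}+\mathrm{vect}(a_{2}-a_{1},\dots,a_{n}-a_{1})$ by eliminating the constraint $\sum_{k}\alpha_{k}=1$, and both deduce minimality from this description. The only cosmetic difference is that the paper phrases the minimality step via the translation $T_{-a_{1}}$ (reducing to the fact that the span is the smallest vector subspace containing $0,a_{2}-a_{1},\dots,a_{n}-a_{1}$), whereas you argue it directly by checking that affine subspaces are stable under affine combinations; these are two ways of saying the same thing.
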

\medskip

\begin{proof} Let $E:=T_{-a_{1}}(\mathcal{A}ff(\mathcal{B}))$. Then $E$ is a vector subspace of $\mathbb{R}^{n}$ \ generated by
$\{a_{2}-a_{1},\dots,a_{n}-a_{1}\}$. Since $E$ is the smaller vector
space containing $0, a_{2}-a_{1},\dots,a_{n}-a_{1},$ so
$T_{a_{1}}(E)$ is the smaller affine subspace of $\mathbb{R}^{n}$
\ containing $\{a_{1},\dots,a_{n}\}$
\end{proof}
\bigskip

\begin{rem}\label{r:3} As consequence of Lemma ~\ref{L:4}, if $E_{G}$ contains $a,a_{1},\dots,a_{n}$  such that $(a_{1},\dots,a_{n})$  is a basis of  $\mathbb{R}^{n}$, and
 $a=\underset{k=1}{\overset{n}{\sum}}\alpha_{k}a_{k}$ with
 $\underset{k=1}{\overset{n}{\sum}}\alpha_{k}\neq 1.$ Then
 $E_{G}=\mathbb{R}^{n}$.
\end{rem}
\medskip

\begin{lem} \label{L:3} Let  $G$ be a non abelian subgroup of
$\mathcal{H}(n,\mathbb{R})$  such that  $G\backslash
\mathcal{S}_{n}\neq \emptyset$.  Then for every
$x\in\mathbb{R}^{n}$  we have $\Gamma_{G}\subset \overline{G(x)}$.
\end{lem}
\medskip

\begin{proof} Let  $x\in\mathbb{R}^{n}$  and
$a\in\Gamma_{G}$.  Since \ $G\backslash \mathcal{S}_{n}\neq
\emptyset$ then there exists  $f=(a,\lambda)\in G\backslash
\mathcal{S}_{n}$, so  $|\lambda|\neq 1$. Suppose that
$|\lambda|>1$ and so
$$\underset{k\longrightarrow-\infty}{lim}f^{k}(x)=\underset{k\longrightarrow-\infty}{lim}\lambda^{k}(x-a)+a=a.$$
\ Hence  $a\in \overline{G(x)}$. It follows that
$\Gamma_{G}\subset \overline{G(x)}$.
\end{proof}
\bigskip

\begin{lem}\label{L:1}Let  $G$ be a non abelian subgroup of
$\mathcal{H}(n,\mathbb{R})$ such that \ $G\backslash
\mathcal{S}_{n}\neq\emptyset$.  Then:
\begin{itemize}
  \item [(i)] if $E_{G}$ is a vector space, there exist
$a_{1},\dots,a_{p}\in \Gamma_{G}$  such that
$(a_{1},\dots,a_{p})$ is a basis of $E_{G}$.
  \item [(ii)] if $G'=T_{-a}\circ G \circ T_{a}$ for some  $a\in\Gamma_{G}$, then $E_{G'}=T_{-a}(E_{G})$ and $\Lambda_{G'}=\Lambda_{G}$.
 \end{itemize}
\end{lem}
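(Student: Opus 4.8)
The plan is to derive both parts from the $G$-invariance of $\Gamma_G$ under the natural action, which itself comes from conjugation: if $c\in\Gamma_G$ is the center of a homothety $h=(c,\lambda)\in G\backslash\mathcal{S}_n$ and $g\in G$ is arbitrary, then $g\circ h\circ g^{-1}$ has the same ratio $\lambda$ (the scalar linear parts commute) and center $g(c)$, so $g\circ h\circ g^{-1}=(g(c),\lambda)\in G\backslash\mathcal{S}_n$ and hence $g(c)\in\Gamma_G$. Thus $g(\Gamma_G)\subseteq\Gamma_G$ for every $g\in G$. Note also $\Gamma_G\neq\emptyset$ since $G\backslash\mathcal{S}_n\neq\emptyset$.

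For $(i)$, I would first show $\gamma_G\subseteq vect(\Gamma_G)$. Given $f=(d,\varepsilon)\in G\cap\mathcal{S}_n$ and any $c\in\Gamma_G$, invariance gives $f(c)=\varepsilon c+d\in\Gamma_G$, whence $d=f(c)-\varepsilon c\in vect(\Gamma_G)$; since $d$ is exactly a typical element of $\gamma_G$, this proves $\gamma_G\subseteq vect(\Gamma_G)$. Consequently $\Omega_G=\Gamma_G\cup\gamma_G\subseteq vect(\Gamma_G)$, so $E_G=\mathrm{Aff}(\Omega_G)\subseteq vect(\Omega_G)=vect(\Gamma_G)$. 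Conversely $\Gamma_G\subseteq E_G$ and $E_G$ is assumed to be a vector space, so $vect(\Gamma_G)\subseteq E_G$. Hence $E_G=vect(\Gamma_G)$, and since $\Gamma_G$ linearly spans the $p$-dimensional space $E_G$, one extracts $a_1,\dots,a_p\in\Gamma_G$ forming a basis of $E_G$.

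For $(ii)$, the two easy assertions about $G'=T_{-a}\circ G\circ T_a$ follow from the standard behaviour of conjugation by a translation. Since $T_{-a}\circ h\circ T_a$ has the same linear part as $h$, ratios are unchanged and $h\mapsto T_{-a}hT_a$ is a group isomorphism respecting the splitting $\mathcal{H}(n,\mathbb R)=\mathcal{H}_n\cup\mathcal{T}_n$; in particular $\Lambda_{G'}=\Lambda_G$ and the correspondence preserves $G\backslash\mathcal{S}_n$. Moreover $\mathrm{Fix}(T_{-a}hT_a)=T_{-a}(\mathrm{Fix}(h))$, so taking the union over non-symmetries yields $\Gamma_{G'}=T_{-a}(\Gamma_G)$. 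Because $a\in\Gamma_G$, we get $0=T_{-a}(a)\in\Gamma_{G'}\subseteq E_{G'}$, so $E_{G'}$ is an affine subspace through the origin, i.e. a vector space; applying part $(i)$ to $G'$ then gives $E_{G'}=vect(\Gamma_{G'})=vect(T_{-a}(\Gamma_G))=vect(\Gamma_G-a)$.

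It remains to identify $vect(\Gamma_G-a)$ with $T_{-a}(E_G)=E_G-a$, and this is where I expect the main obstacle to be. The difficulty is that conjugation does \emph{not} carry $\Omega_G$ to $T_{-a}(\Omega_G)$: while $\Gamma$ transforms equivariantly, the symmetry-parts in $\gamma$ acquire an inhomogeneous shift (a symmetry $(d,\varepsilon)\in G$ contributes $(\varepsilon-1)a+d$ to $\gamma_{G'}$ rather than $d-a$), so one cannot simply transport the affine hull. The clean route is to prove $\mathrm{Aff}(\Gamma_G)=E_G$, equivalently $a\in vect(\Gamma_G-a)$ for $a\in\Gamma_G$; granting this, $vect(\Gamma_G-a)=\mathrm{Aff}(\Gamma_G)-a=E_G-a=T_{-a}(E_G)$, concluding the proof. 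Thus the crux is to show that the homothety centers in $\Gamma_G$ already span $E_G$ \emph{affinely} and not merely linearly, which is the delicate point and must exploit the finer way the symmetries and homotheties of $G$ interlock rather than the coarse invariance used above.
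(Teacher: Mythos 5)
Your part (i) is correct, and it is in fact cleaner than the paper's own argument: the paper starts from a basis of $E_{G}$ split between $\Gamma_{G}$ and $\gamma_{G}$ and replaces each $b_{i}\in\gamma_{G}$ by a conjugated center $g_{i}(a_{1})\in\Gamma_{G}$ via an explicit invertible matrix, whereas you show directly that $\gamma_{G}\subseteq \mathrm{vect}(\Gamma_{G})$ from the $G$-invariance of $\Gamma_{G}$; both arguments rest on the same identity $g\circ(c,\lambda)\circ g^{-1}=(g(c),\lambda)$, so for (i) you reach the same conclusion by a shorter route.

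Part (ii), however, is not finished. You correctly obtain $\Lambda_{G'}=\Lambda_{G}$, $\Gamma_{G'}=T_{-a}(\Gamma_{G})$ and $E_{G'}=\mathrm{vect}(\Gamma_{G}-a)$, and then you explicitly leave open the inclusion $T_{-a}(E_{G})\subseteq E_{G'}$, which (as you observe) reduces to $a\in \mathrm{vect}(\Gamma_{G}-a)$, i.e.\ to $0\in \mathrm{Aff}(\Gamma_{G})$. Announcing that the crux ``must exploit the finer way the symmetries and homotheties interlock'' is a correct diagnosis, not a proof, so the proposal has a genuine gap exactly at the step that carries the content of (ii). For what it is worth, the paper's proof jumps over the same point: it asserts that $E_{G'}$ admits a basis of $p=\dim E_{G}$ elements of $\Gamma_{G'}$, which already presupposes $\dim E_{G'}=\dim E_{G}$, i.e.\ the missing inclusion. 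Moreover, with the definitions as printed the missing step can actually fail: in $\mathbb{R}^{2}$ let $G$ be generated by $h=(c,2)$ with $c=(1,0)$ and the symmetry $f=(d,-1)$ with $d=(0,2)$. Then $G$ is non abelian, the line $L$ through $(1,0)$ and $(0,1)$ is $G$-invariant, so every fixed point of a non-translation of $G$ lies on $L$ and every translation of $G$ has its vector in $\mathbb{R}(1,-1)$; one checks that $\Omega_{G'}\subseteq\mathbb{R}(1,-1)$ for $G'=T_{-c}\circ G\circ T_{c}$, hence $\dim E_{G'}=1$, while $d=f(0)\in\gamma_{G}\setminus L$ together with $c\in\Gamma_{G}$ forces $E_{G}=\mathbb{R}^{2}$. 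So the obstacle you isolated is real and cannot be closed without changing the definition of $\gamma_{G}$ (or of $E_{G}$); your honesty about where the argument stops is to your credit, but as written the proof of (ii) is incomplete.
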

\medskip

\begin{proof}(i) Since  $E_{G}$  is a vector
subspace of $\mathbb{R}^{n}$ with dimension $p$, so
 $E_{G}=vect(\Omega_{G})$ where $\Omega_{G}=\Gamma_{G}\cup\gamma_{G}$. As  $G\backslash
\mathcal{S}_{n}\neq\emptyset$ then  $\Gamma_{G}\neq\emptyset$. Let  $a_{1},\dots,a_{k}\in \Gamma_{G}$  and
$b_{k+1},\dots,b_{p}\in \gamma_{G}$  such that
$\mathcal{B}_{1}=(a_{1},\dots,a_{k},b_{k+1},\dots,b_{p})$  is a basis
of  $E_{G}$ and $(a_{1},\dots,a_{k})$ is a basis of $vect(\Gamma_{G})$. For every  $k+1\leq i \leq p$  there exists
$g_{i}\in \mathcal{S}_{n}\cap G$  such that  $g_{i}(0)=b_{i}$.
Since $a_{1}\in \Gamma_{G}$ then there exists  $f\in
G\backslash\mathcal{S}_{n}$  with  $f=(a_{1},\lambda)$. Write
$f_{i}=g_{i}\circ f\circ g^{-1}_{i}$, for every  $k+1\leq i \leq
p$. We have $f_{i}=(g_{i}(a_{1}), \lambda)\in G\backslash
\mathcal{S}_{n}$.  See that
$g_{i}(a_{1})=\varepsilon_{i}a_{1}+b_{i}$, with
$\varepsilon_{i}\in\{-1, 1\}$,  $k+1\leq i \leq p$, so
$g_{i}(a_{1})\in \Gamma_{G}$, for every  $k+1\leq i \leq p$.
\
\\
 \\ Let's show that
$\mathcal{B}_{2}=(a_{1},\dots,a_{k},g_{k+1}(a_{1}),\dots,g_{p}(a_{1}))$
 is a basis of $E_{G}$: \\
 Let $$M=\left[\begin{array}{cc}
            I_{k} & A \\
            0 & I_{p-k}
          \end{array}\right], \ \ \ \mathrm{with} \ \ \ A=\left[\begin{array}{ccc}
                  \varepsilon_{k+1} & \dots  & \varepsilon_{p} \\
                  0 & \dots  & 0 \\
                   \vdots & \ddots  & \vdots  \\
                  0 & \dots   & 0
                \end{array}
\right],$$
\\
 and $I_{k}$, $I_{p-k}$ are respectively the identity matrix
of $M_{k}(\mathbb{R})$ and $M_{p-k}(\mathbb{R})$. So $M$ is invertible and $M(\mathcal{B}_{1})=\mathcal{B}_{2}$. So
$\mathcal{B}_{2}$ is a basis of $E_{G}$ contained in
$\Gamma_{G}$, a contradiction. We conclude that $k=p$.
\
\\
\\
(ii) Second, suppose that  $E_{G}$  is an affine subspace of
 $\mathbb{R}^{n}$  with dimension $p$. Let $a\in \Gamma_{G}$ and
 $G'=T_{-a}\circ G \circ T_{a}$.  Set $f=(a,\lambda)\in G\backslash \mathcal{S}_{n}$, then
 $T_{-a}\circ f\circ T_{a}=(0,\lambda)\in G'\backslash\mathcal{S}_{n}$, so
 $0\in\Gamma_{G'}\subset E_{G'}$, hence $E_{G'}$ is a vector space. By $(i)$ there exists a basis
$(a'_{1},\dots,a'_{p})$  of  $E_{G'}$  contained in
$\Gamma_{G'}$.  Since  $\Gamma_{G'}=T_{-a}(\Gamma_{G})$, we let
 $a_{k}=T_{a}(a'_{k})$,  $1\leq k \leq p$, so  $a_{1},\dots,a_{p}\in\Gamma_{G}$. We have  $\Gamma_{G'}=T_{-a}(\Gamma_{G})\subset T_{-a}(E_{G})$  and  $T_{-a}(E_{G})$  is a
  vector subspace of  $\mathbb{R}^{n}$  with dimension $p$, containing  $a'_{1},\dots,a'_{p}$. So  $E_{G'}=T_{-a}(E_{G})$.
    \\
  See that for every $f=(b,\lambda)\in G\backslash \mathcal{S}_{n}$, $T_{-a}\circ f \circ T_{a}=(b-a,\lambda)$, so $\Lambda_{G'}=\Lambda_{G}$.\
  \end{proof}
\bigskip

\begin{lem} \label{L:1200} Let  $G$\ be a non abelian subgroup of
$\mathcal{H}(n,\mathbb{R})$  such that
$G\backslash\mathcal{S}_{n}\neq\emptyset$ and $E_{G}$ is a vector space. If \ $a_{1},...,a_{p}\in\Gamma_{G}$ \ such that \
$\mathcal{B}_{1}=(a_{1},...,a_{p})$  \ is a bases of \
$E_{G}$ \ then there exists \ $a\in\Gamma_{G}$ \ such
that \ $\mathcal{B}_{2}=(a_{1}-a,...,a_{p}-a)$ \ is also a
basis of \ $E_{G}$.
\end{lem}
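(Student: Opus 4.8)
The plan is to translate the statement into a single determinant condition and then to locate the required center by a dimension count. Fix coordinates with respect to the basis $\mathcal{B}_{1}=(a_{1},\dots,a_{p})$, so that each $a_{k}$ corresponds to the $k$-th coordinate vector $e_{k}$. Any candidate $a\in\Gamma_{G}\subset E_{G}$ may be written $a=\sum_{k=1}^{p}\alpha_{k}a_{k}$, and the family $\mathcal{B}_{2}=(a_{1}-a,\dots,a_{p}-a)$ is the image of $\mathcal{B}_{1}$ under the linear map whose matrix $M$ has $k$-th column $e_{k}-\alpha$, where $\alpha=(\alpha_{1},\dots,\alpha_{p})^{T}$. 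Since $M=I-\alpha\mathbf{1}^{T}$ is a rank-one update of the identity, a direct computation gives $\det M=1-\sum_{k=1}^{p}\alpha_{k}$. Hence $\mathcal{B}_{2}$ is a basis of $E_{G}$ if and only if $\sum_{k=1}^{p}\alpha_{k}\neq 1$; by Lemma ~\ref{L:4} and Remark ~\ref{r:3} this is exactly the condition $a\notin \mathcal{A}ff(\mathcal{B}_{1})$.

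So the whole lemma reduces to exhibiting one element of $\Gamma_{G}$ lying off the affine hyperplane $\mathcal{A}ff(\mathcal{B}_{1})$. Since $a_{1},\dots,a_{p}$ are linearly independent they are affinely independent, so $\mathcal{A}ff(\mathcal{B}_{1})$ has dimension $p-1$ and, not containing $0$, is a proper subset of $E_{G}$. Recall that $E_{G}=\mathcal{A}ff(\Omega_{G})$ has dimension $p$. If every point of $\Omega_{G}$ lay in $\mathcal{A}ff(\mathcal{B}_{1})$, then by minimality $E_{G}=\mathcal{A}ff(\Omega_{G})\subset \mathcal{A}ff(\mathcal{B}_{1})$, forcing $p\le p-1$, a contradiction. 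Therefore some point $a\in\Omega_{G}$ satisfies $\sum_{k=1}^{p}\alpha_{k}\neq 1$, and for that $a$ the family $\mathcal{B}_{2}$ is a basis.

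The main obstacle is the final bookkeeping: the dimension count naturally produces the witness $a$ in $\Omega_{G}=\Gamma_{G}\cup\gamma_{G}$, whereas the statement asks for $a\in\Gamma_{G}$. If the witness already lies in $\Gamma_{G}$ we are done; the delicate case is when $\mathcal{A}ff(\mathcal{B}_{1})$ contains all of $\Gamma_{G}$ and the escaping point comes from $\gamma_{G}$. To force a \emph{center} off the hyperplane I would exploit that $\Gamma_{G}$ is $G$-invariant (conjugating a homothety $(c,\lambda)$ by $g\in G$ yields $(g(c),\lambda)$, so $g(c)\in\Gamma_{G}$) together with the two distinct centers from Lemma ~\ref{L:6}(iii): writing $S$ for the $\mathcal{B}_{1}$-coordinate-sum functional, a symmetry $\sigma=(w,-1)\in G$ sends $c\in\Gamma_{G}$ to $\sigma(c)=-c+w\in\Gamma_{G}$ with $S(\sigma(c))=-1+S(w)$, and a translation $T_{v}\in G$ sends $c$ to $c+v\in\Gamma_{G}$ with $S(c+v)=1+S(v)$. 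One of these should land off $\mathcal{A}ff(\mathcal{B}_{1})$ unless the symmetry and translation data all preserve $S$, a situation that would itself place $\gamma_{G}$ inside $\mathcal{A}ff(\mathcal{B}_{1})$ and so re-trigger the dimension contradiction of the previous paragraph. Making this dichotomy airtight — and in particular checking whether the conclusion must be read with $\Omega_{G}$ in place of $\Gamma_{G}$ — is where the real care is needed.
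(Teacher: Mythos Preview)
Your determinant reduction is correct and in fact cleaner than what the paper does. The paper argues constructively: it picks $f_{p-1}=(a_{p-1},\lambda)\in G\setminus\mathcal{S}_{n}$, sets $a=f_{p-1}(a_{p})=\lambda a_{p}+(1-\lambda)a_{p-1}$ (which lies in $\Gamma_{G}$ by $G$-invariance of $\Gamma_{G}$), writes down an explicit change-of-basis matrix $P$, and asserts $\det P=2\lambda(1-\lambda)\neq0$. But your own criterion exposes the slip: this $a$ has $\sum_{k}\alpha_{k}=\lambda+(1-\lambda)=1$, i.e.\ $a\in\mathcal{A}ff(\mathcal{B}_{1})$, so the correct determinant is $0$. (Tracing through, the paper's $P$ carries a sign error in the $(p,p-1)$ entry.) More conceptually, any affine homothety with center on the hyperplane $\mathcal{A}ff(\mathcal{B}_{1})$ preserves that hyperplane, so no choice of the form $f(a_{j})$ with $f=(a_{i},\lambda)$ can ever leave it.

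Your hesitation about upgrading the witness from $\Omega_{G}$ to $\Gamma_{G}$ is therefore not bookkeeping but the real content, and the lemma as stated can fail. Take $n=p=2$, $a_{1}=(1,0)$, $a_{2}=(0,1)$, and $G=\langle f_{1},f_{2},\sigma\rangle$ with $f_{i}=(a_{i},2)$ and $\sigma=(2a_{1},-1)$. Each generator preserves the line $L=\{x+y=1\}=\mathcal{A}ff(\mathcal{B}_{1})$, hence so does every element of $G$; consequently every fixed point of a non-translation in $G$ lies on $L$, and $\Gamma_{G}\subset L$. On the other hand $0=\mathrm{id}(0)\in\gamma_{G}$, so $E_{G}=\mathcal{A}ff(\Omega_{G})=\mathbb{R}^{2}$ is a vector space and $(a_{1},a_{2})\subset\Gamma_{G}$ is a basis of it, yet no $a\in\Gamma_{G}$ makes $(a_{1}-a,a_{2}-a)$ independent. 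Your dimension argument does produce $a\in\Omega_{G}\setminus L$ (e.g.\ $a=0$), and that is the right object to try to feed into the downstream Lemma~\ref{L:12}; but both the statement and the paper's proof, as written, need repair.
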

\medskip

\begin{proof} Since $a_{p-1}\in\Gamma_{G}$, then there exists $\lambda\in\Lambda_{G}\backslash\{-1 ,1\}$
such that $f_{p-1}=(a_{p-1},\lambda)\in G\backslash \mathcal{S}_{n}$. Let
$a=f_{p-1}(a_{p})=\lambda(a_{p}-a_{p-1})+a_{p-1}$.
 Then $a=\lambda a_{p}+(1-\lambda)a_{p-1}.$ By Lemma ~\ref{L:2}.(ii), $\Gamma_{G}$ is $G$-invariant, so $a\in \Gamma_{G}$.

We have  $P(\mathcal{B}_{1})=\mathcal{B}_{2}$   where
$$P=\left[\begin{array}{cccccc}
            1 & 0 & \dots & \dots & 0 & 0 \\
            0 & \ddots & \ddots & \ddots & \vdots & \vdots \\
            \vdots & \ddots & \ddots & \ddots & \vdots & \vdots \\
            0 & \dots & 0& 1& 0 & 0 \\
            \lambda-1 & \dots & \dots & \lambda-1 & \lambda & \lambda-1\\
            -\lambda & \dots & \dots & -\lambda &\lambda & 1-\lambda
          \end{array}
\right].$$ Since $\mathrm{det}(P)=2\lambda(1-\lambda)\neq 0$  then $P$  is invertible
and so  $\mathcal{B}_{2}$  is a basis of  $\mathbb{R}^{n}$.

\end{proof}
\bigskip

\begin{lem}\label{L:1111} Let $G$ be the subgroup of $\mathcal{H}(n, \mathbb{R})$ generated by $f_{1}=(a_{1},\lambda_{1}),\dots,
 f_{p}=(a_{p},\lambda_{p})\in \mathcal{H}(n, \mathbb{R})\backslash \mathcal{S}_{n}$. Then $E_{G}=\mathcal{A}ff(\{a_{1},\dots, a_{p}\})$.
\end{lem}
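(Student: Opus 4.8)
The plan is to set $F=\mathcal{A}ff(\{a_{1},\dots,a_{p}\})$ and prove the two inclusions $F\subseteq E_{G}$ and $E_{G}\subseteq F$ separately. For the first, I observe that each $f_{i}=(a_{i},\lambda_{i})$ lies in $\mathcal{H}(n,\mathbb{R})\backslash\mathcal{S}_{n}$, so $\mathrm{Fix}(f_{i})=\{a_{i}\}$ and hence $a_{i}\in\Gamma_{G}\subseteq\Omega_{G}$. Thus $E_{G}=Aff(\Omega_{G})$ is an affine subspace containing every $a_{i}$, and since $F$ is by definition the smallest such subspace, $F\subseteq E_{G}$.

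The core of the argument is the reverse inclusion, for which I would first show that $F$ is $G$-invariant. It suffices to check that each generator preserves $F$: if $x=\sum_{k}\alpha_{k}a_{k}$ with $\sum_{k}\alpha_{k}=1$, then $f_{i}(x)=\lambda_{i}x+(1-\lambda_{i})a_{i}=\sum_{k}(\lambda_{i}\alpha_{k})a_{k}+(1-\lambda_{i})a_{i}$, whose coefficients again sum to $\lambda_{i}+(1-\lambda_{i})=1$; hence $f_{i}(F)\subseteq F$, and applying the same computation to $f_{i}^{-1}=(a_{i},\lambda_{i}^{-1})$ gives $f_{i}(F)=F$. Since the $f_{i}^{\pm1}$ generate $G$, every $g\in G$ satisfies $g(F)=F$.

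To prove $E_{G}\subseteq F$ it then suffices to show $\Omega_{G}=\Gamma_{G}\cup\gamma_{G}\subseteq F$, because $F$ is an affine subspace and $E_{G}$ is the smallest one containing $\Omega_{G}$. For $\Gamma_{G}$: given $g=(c,\mu)\in G\backslash\mathcal{S}_{n}$ we have $|\mu|\neq1$, so replacing $g$ by $g^{-1}$ if necessary we may assume $|\mu|>1$; then $g^{-k}(a_{1})=\mu^{-k}(a_{1}-c)+c\to c$, and since $a_{1}\in F$, each $g^{-k}(a_{1})\in F$ by invariance, and $F$ is closed, so $c\in F$. Hence $\Gamma_{G}\subseteq F$. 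For $\gamma_{G}$ the difficulty is that $f(0)$ need not lie in $F$ unless $0\in F$, so I would first conjugate to arrange this: put $G'=T_{-a_{1}}\circ G\circ T_{a_{1}}$, which is generated by the homotheties $f_{i}'=(a_{i}-a_{1},\lambda_{i})$ and satisfies $0=\mathrm{Fix}(f_{1}')\in F':=T_{-a_{1}}(F)=\mathcal{A}ff(\{a_{i}-a_{1}\})$. Running the invariance argument for $G'$ shows $F'$ is $G'$-invariant with $0\in F'$, so $f(0)\in F'$ for every $f\in G'\cap\mathcal{S}_{n}$, i.e. $\gamma_{G'}\subseteq F'$; together with $\Gamma_{G'}\subseteq F'$ this yields $E_{G'}\subseteq F'$, and with the easy inclusion $E_{G'}=F'$. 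Finally Lemma ~\ref{L:1}(ii) gives $E_{G'}=T_{-a_{1}}(E_{G})$, whence $E_{G}=T_{a_{1}}(F')=F$.

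The step I expect to be most delicate is the treatment of $\gamma_{G}$: because $\gamma_{G}$ is defined through evaluation at the origin rather than through conjugation-invariant data, its elements are not controlled directly by the $G$-invariance of $F$, and one genuinely has to move a fixed point to $0$ (via Lemma ~\ref{L:1}(ii)) before invariance forces $f(0)\in F$. Everything else — the easy inclusion, the invariance computation, and the limit argument for $\Gamma_{G}$ — is routine.
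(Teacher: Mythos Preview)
Your argument is correct and reaches the same conclusion, but the route is genuinely different from the paper's.  The paper proves $\Omega_{G}\subset\mathcal{A}ff(\{a_{1},\dots,a_{p}\})$ by an explicit computation: it writes a general element of $G$ as a word $f_{i_{1}}^{n_{1}}\circ\cdots\circ f_{i_{q}}^{n_{q}}$, expands this to obtain $f(x)=\lambda x+a$ with an explicit formula for $a$ as a linear combination of $a_{i_{1}},\dots,a_{i_{q}}$, and then reads off that the fixed point $a/(1-\lambda)$ (when $|\lambda|\neq1$) or the value $f(0)=a$ (when $|\lambda|=1$) lies in the span of the $a_{i}$; the affine case is then reduced to the linear one by the same translation you use.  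Your proof replaces that word computation by two structural observations: (a) $F$ is $G$-invariant, checked on generators via the barycentric coefficient count, and (b) any $g=(c,\mu)\in G\setminus\mathcal{S}_{n}$ with $g(F)=F$ must have $c\in F$, which you obtain as the limit of the iterates $g^{-k}(a_{1})$ inside the closed set $F$.  Step~(b) is a dynamical/topological argument that has no analogue in the paper's purely algebraic expansion.

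What each approach buys: the paper's computation is self-contained and gives, as a by-product, an explicit description of every element of $G$, at the cost of a somewhat heavy formula.  Your invariance-plus-limit argument is shorter and more conceptual, and it makes transparent \emph{why} the centers are trapped in $F$ (an invariant affine set must contain the fixed point of any contraction it carries).  Both proofs share the same final manoeuvre---conjugating by $T_{-a_{1}}$ and invoking Lemma~\ref{L:1}(ii)---to handle $\gamma_{G}$, so your remark that this is the delicate step is on target.  Note incidentally that your limit argument for $\Gamma_{G}$ can be made algebraic: from $g(F)=F$ and any $y\in F$ one has $g(y)-y=(\mu-1)(y-c)\in\vec{F}$, hence $c\in y+\vec{F}=F$ without taking limits.
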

\bigskip

\begin{proof}Since $E_{G}=Aff(\Omega_{G})$, it suffices to show that $\Omega_{G}\subset Aff(\{a_{1},\dots, a_{p}\})$.\
\\
\\
(i) First, suppose that $E_{G}$ is a vector space. We will prove that $\Omega_{G}\subset \mathrm{vect}(\{a_{1},\dots, a_{p}\})$:\
\\
Let $f\in G$, so $f=f^{n_{1}}_{i_{1}}\circ\dots\circ f^{n_{q}}_{i_{q}}$ for some $q\in \mathbb{N}^{*}$,
$n_{1},\dots, n_{q}\in\mathbb{Z}$ and $i_{1},\dots, i_{q}\in\{1,\dots, p\}$. So for every $x\in \mathbb{R}^{n}$, \begin{align*}
f(x) & =f^{n_{1}}_{i_{1}}\circ\dots\circ f^{n_{q}}_{i_{q}}(x)\\
\ & =\lambda^{n_{1}}_{i_{1}}(\lambda^{n_{2}}_{i_{2}}(\dots (\lambda^{n_{q-1}}_{i_{q-1}}(\lambda^{n_{q}}_{i_{q}}(x-a_{i_{q}})+a_{i_{q}}-a_{i_{q-1}})+a_{i_{q-1}}\dots)\dots -a_{i_{1}})+a_{i_{1}}\\
\ & = \lambda^{n_{1}}_{i_{1}}\dots\lambda^{n_{q}}_{i_{q}}(x-a_{i_{q}})+\underset{k=1}{\overset{q-1}{\sum}}\lambda^{n_{1}}_{i_{1}}\dots\lambda^{n_{k}}_{i_{k}}(a_{i_{k+1}}-a_{i_{k}})+a_{i_{1}}\\
\ & = \lambda^{n_{1}}_{i_{1}}\dots\lambda^{n_{q}}_{i_{q}} x+\left(\underset{k=1}{\overset{q-1}{\sum}}\lambda^{n_{1}}_{i_{1}}\dots\lambda^{n_{k}}_{i_{k}}a_{i_{k+1}}-\underset{k=1}{\overset{q}{\sum}}\lambda^{n_{1}}_{i_{1}}\dots\lambda^{n_{k}}_{i_{k}}a_{i_{k}}+a_{i_{1}}\right)\\
\ & = \lambda x + a
\end{align*}

where $$(1)\ \ \ \ \ \ \left\{\begin{array}{c}
                \lambda=\lambda^{n_{1}}_{i_{1}}\dots\lambda^{n_{q}}_{i_{q}} \ \ \ \ \ \ \ \ \ \ \ \ \ \ \  \ \ \ \ \ \ \ \ \ \ \ \  \ \ \ \ \ \ \ \ \ \ \ \ \ \ \ \ \ \\
                a=\underset{k=1}{\overset{q-1}{\sum}}\lambda^{n_{1}}_{i_{1}}\dots\lambda^{n_{k}}_{i_{k}}a_{i_{k+1}}-\underset{k=1}{\overset{q}{\sum}}\lambda^{n_{1}}_{i_{1}}\dots\lambda^{n_{k}}_{i_{k}}a_{i_{k}}+a_{i_{1}}
              \end{array}\right.$$
-\ If $|\lambda|\neq 1$, then $f(x)= \lambda x + a=\lambda \left(x-\frac{a}{1-\lambda}\right) + \frac{a}{1-\lambda}$, $x\in \mathbb{R}^{n}$, so $f=\left(\frac{a}{1-\lambda},\ \lambda\right)$, with $\frac{a}{1-\lambda}\in \mathrm{vect}(\{a_{1},\dots, a_{p}\})$, so $\Gamma_{G}\subset \mathrm{vect}(a_{1},\dots,a_{p})$.\
\\
-\ If $|\lambda|= 1$, then $f=\left(a,\ \frac{\lambda}{|\lambda|}\right)$, with $a=f(0)\in \mathrm{vect}(\{a_{1},\dots, a_{p}\})$, so $\gamma_{G}\subset \mathrm{vect}(a_{1},\dots,a_{p})$.\
\\
It follows that $\Omega_{G}\subset \mathrm{vect}(\{a_{1},\dots, a_{p}\})$.\
\\
\\
(ii) Second, suppose that $E_{G}$ is an affine space. We will prove that $\Omega_{G}\subset \mathcal{A}ff(\{a_{1},\dots, a_{p}\})$:
Let $G'=T_{-a_{1}}\circ G \circ T_{a_{1}}$, so $G'$ is generated by
$f'_{k}=T_{-a_{1}}\circ f_{k} \circ T_{a_{1}}=(a_{k}-a_{1},\lambda_{k})$, $k=1,\dots, p$. By Lemma ~\ref{L:5}.(ii), $E_{G'}=T_{-a_{1}}(E_{G})$ is a vector space and by (i) we have
$E_{G'}\subset vect(\{a_{2}-a_{1},\dots, a_{p}-a_{1}\})$,  so $E_{G}=T_{a_{1}}(E_{G'})\subset T_{a_{1}}(\mathrm{vect}(\{a_{2}-a_{1},\dots, a_{p}-a_{1}\}))
=\mathcal{A}ff(\{a_{1},a_{2},\dots, a_{p}\})$, so the proof is complete.
\end{proof}

\section{\textbf{Some results in the case  $G\backslash \mathcal{S}_{n}\neq\emptyset$}}

In this case, $G$ contains an affine homothety having a ratio with module different to $1$. In
the following, we give some Lemmas and propositions, will be
used to prove Theorem ~\ref{T:1}.

\medskip

\begin{lem}\label{L:2} Let $G$ be a non abelian subgroup of $\mathcal{H}(n,\mathbb{R})$, then:
\begin{itemize}
  \item [(i)] If $E_{G}$ is a vector
subspace of $\mathbb{R}^{n}$, then $\{f(0), f\in G\}\subset E_{G}$.
  \item [(ii)] If \ $\Gamma_{G}\neq\emptyset$,  then $\Gamma_{G}$ and $E_{G}$ are  $G$-invariant.
\end{itemize}
\end{lem}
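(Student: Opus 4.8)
The plan is to treat part (i) by a direct computation and part (ii) by first establishing invariance of the fixed-point set $\Gamma_{G}$ through a conjugation argument, and then deducing invariance of $E_{G}$ by reducing to the case where $E_{G}$ is a vector subspace. For part (i), I would take an arbitrary $f\in G$ and write $f=(b,\lambda)$ with $\lambda\in\Lambda_{G}$. If $|\lambda|=1$ then $f\in G\cap\mathcal{S}_{n}$, so $f(0)\in\gamma_{G}\subset\Omega_{G}\subset E_{G}$ by definition of $E_{G}$. If $|\lambda|\neq 1$ then $f\in G\backslash\mathcal{S}_{n}$ and its center $c:=\mathrm{Fix}(f)$ lies in $\Gamma_{G}\subset E_{G}$; since $f(0)=\lambda(0-c)+c=(1-\lambda)c$ and $E_{G}$ is assumed to be a vector subspace, the scalar multiple $(1-\lambda)c$ remains in $E_{G}$. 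In both cases $f(0)\in E_{G}$, which proves (i).

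For the invariance of $\Gamma_{G}$ in part (ii), I would fix $c\in\Gamma_{G}$, choose $h=(c,\mu)\in G\backslash\mathcal{S}_{n}$ with $|\mu|\neq 1$ and $\mathrm{Fix}(h)=\{c\}$, and for an arbitrary $f\in G$ consider the conjugate $g:=f\circ h\circ f^{-1}\in G$. The linear ratio of $g$ equals that of $h$, namely $\mu$, so $|\mu|\neq 1$ forces $g\in G\backslash\mathcal{S}_{n}$; moreover $g(f(c))=f(h(f^{-1}(f(c))))=f(h(c))=f(c)$, whence $f(c)=\mathrm{Fix}(g)\in\Gamma_{G}$. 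This yields $f(\Gamma_{G})\subset\Gamma_{G}$ for every $f\in G$.

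For the invariance of $E_{G}$, I would first dispose of the subcase in which $E_{G}$ is a vector subspace: for any $f\in G$ of ratio $\lambda$ and any $x\in E_{G}$ one has $f(x)=\lambda x+f(0)$, where $\lambda x\in E_{G}$ because $E_{G}$ is a vector space and $f(0)\in E_{G}$ by part (i), so $f(x)\in E_{G}$. For the general case I would pick $a\in\Gamma_{G}$, nonempty by hypothesis, and pass to $G'=T_{-a}\circ G\circ T_{a}$. By Lemma~\ref{L:1}(ii) we have $E_{G'}=T_{-a}(E_{G})$, and since the conjugate $T_{-a}\circ(a,\lambda)\circ T_{a}=(0,\lambda)$ of a homothety centered at $a$ is centered at $0$, the set $\Gamma_{G'}$ contains $0$, so $E_{G'}$ is a vector subspace. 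Applying the vector-space subcase to $G'$ gives $f'(E_{G'})\subset E_{G'}$ for every $f'\in G'$; unwinding $f'=T_{-a}\circ f\circ T_{a}$ and $E_{G'}=T_{-a}(E_{G})$ translates this back to $f(E_{G})\subset E_{G}$.

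The main obstacle is precisely the invariance of $E_{G}$ when $E_{G}$ is a genuine affine (non-linear) subspace, because the formula $f(x)=\lambda x+f(0)$ does not keep $x$ inside $E_{G}$ after scaling, affine subspaces not being closed under scalar multiplication. The device resolving this is conjugation by $T_{a}$ with $a\in\Gamma_{G}$, which uses $\Gamma_{G}\neq\emptyset$ together with Lemma~\ref{L:1}(ii) to guarantee that the translated group $G'$ has an honest vector space $E_{G'}$, thereby reducing everything to the linear subcase already settled via part (i).
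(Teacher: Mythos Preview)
Your proposal is correct and follows essentially the same approach as the paper's own proof: part (i) by the same two-case computation, invariance of $\Gamma_{G}$ by the same conjugation argument, and invariance of $E_{G}$ by reducing to the vector-space case via conjugation by $T_{-a}$ with $a\in\Gamma_{G}$ and invoking Lemma~\ref{L:1}(ii). Your write-up is in fact slightly cleaner than the paper's, which contains minor slips (it writes $f(0)\in\Gamma_{G'}$ where it should say $E_{G'}$, and $E_{G}=T_{-a}(E_{G'})$ where it should be $T_{a}(E_{G'})$).
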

\medskip

\begin{proof}(i) Let $f\in G$, there are two cases:
\
\\
- If $f\in G\backslash \mathcal{S}_{n}$, then $f=(a,\lambda)$, for
some $\lambda\in\Lambda_{G}$  and \ $a\in\Gamma_{G}\subset E_{G}$. Therefore \
$f(x)=\lambda(x-a)+a$, $x\in \mathbb{R}^{n}$  and
$f(0)=(1-\lambda)a$, so $f(0)\in E_{G}$ since $E_{G}$ is a vector space.
\
\\
- If  $f\in \mathcal{S}_{n}$, then $f=(a,\varepsilon)$, with $|\varepsilon|=1$ and so
$f(0)=a\in E_{G}$.
\
\\
\\
(ii) Suppose that $\Gamma_{G}\neq\emptyset$:\
\\
$\Gamma_{G}$ is $G$-invariant: Let  $a\in \Gamma_{G}$ and \
$g\in G$  then there exists
$\lambda\in\mathbb{R}\backslash\{-1,1\}$  such that
$f=(a,\lambda)\in G\backslash S_{n}$.  We let \ $h=g\circ f\circ
g^{-1}\in G$. We obtain  $h=(a',\lambda)\in G\backslash S_{n}$
with  $a'=g(a)$. It follows that  $g(a)\in \Gamma_{G}$ \ and so
$\Gamma_{G}$  is $G$-invariant.
\
\\
 $E_{G}$  is $G$-invariant: Let $a\in \Gamma_{G}$
and $G'=T_{-a}\circ G \circ T_{a}$. We have  $G'$ is a non abelian
subgroup of  $\mathcal{H}(n, \mathbb{R})$  and
$E_{G'}=T_{-a}(E_{G})$  is a vector subspace of
$\mathbb{R}^{n}$. Let  $f\in G'$ having the form $f(x)=\lambda
x+b$, $x\in \mathbb{R}^{n}$.  By (i), $b=f(0)\in\Gamma_{G'}
\subset E_{G'}$. So for every  $x\in E_{G'}$, $f(x)\in
E_{G'}$, hence $E_{G'}$ is $G'$-invariant. By Lemma
~\ref{L:1}.(ii) one has  $E_{G}=T_{-a}(E_{G'})$,  so it is
$G$-invariant.
\end{proof}
\bigskip

\begin{lem}\label{L:7} Let $\lambda>1$  and  $H^{\lambda}:=\{q\lambda^{p}(1-\lambda^{p}), \ \ p,q\in \mathbb{Z}\}$.
Then \ $H^{\lambda}$  is dense in  $\mathbb{R}$.
\end{lem}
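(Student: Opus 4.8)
The plan is to exhibit $H^{\lambda}$ as a union of one-dimensional lattices whose mesh shrinks to zero, so that density follows from the elementary fact that a lattice $c\mathbb{Z}$ with $c$ small already approximates every real number to within $c$. First I would write, for each fixed $p \in \mathbb{Z}$, the quantity $c_{p} := \lambda^{p}(1-\lambda^{p})$, so that $H^{\lambda} = \bigcup_{p\in\mathbb{Z}} c_{p}\mathbb{Z}$; indeed each set $\{q c_{p} : q\in\mathbb{Z}\} = c_{p}\mathbb{Z}$ is a subgroup of $\mathbb{R}$, discrete of mesh $\abs{c_{p}}$ whenever $c_{p}\neq 0$.

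The decisive observation concerns the behaviour of $c_{p}$ as $p \to -\infty$. Writing $p = -m$ with $m \geq 1$ and using $\lambda > 1$, we have $\lambda^{-m} \in (0,1)$, hence $c_{-m} = \lambda^{-m}(1-\lambda^{-m}) = \lambda^{-m} - \lambda^{-2m}$ is strictly positive; and since $\lambda^{-m} \to 0$ we get $c_{-m} \to 0$ as $m \to \infty$. Thus the lattices $c_{-m}\mathbb{Z} \subset H^{\lambda}$ have positive mesh tending to $0$. (Note that the choice $p\to +\infty$ would be useless here, since there $\abs{c_{p}}\to\infty$.)

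With this in hand the conclusion is immediate. Given $x \in \mathbb{R}$ and $\varepsilon > 0$, I would choose $m$ large enough that $0 < c_{-m} < \varepsilon$, and then set $q = \lfloor x/c_{-m} \rfloor \in \mathbb{Z}$; this gives $\abs{q c_{-m} - x} \le c_{-m} < \varepsilon$ with $q c_{-m} \in H^{\lambda}$. Since $x$ and $\varepsilon$ are arbitrary, every real number lies in $\overline{H^{\lambda}}$, i.e. $H^{\lambda}$ is dense in $\mathbb{R}$.

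There is essentially no hard step: the whole argument hinges on recognizing that letting $p \to -\infty$ produces grids of arbitrarily fine mesh. The only point requiring a word of care is that $c_{-m} \neq 0$ for every $m \geq 1$, which is guaranteed by $\lambda^{-m} < 1$, so that each $c_{-m}\mathbb{Z}$ is a genuine lattice rather than the trivial subgroup $\{0\}$.
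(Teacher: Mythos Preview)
Your proof is correct and follows essentially the same approach as the paper: both arguments observe that for negative $p$ the quantity $c_{p}=\lambda^{p}(1-\lambda^{p})$ is positive and tends to $0$, so the lattices $c_{p}\mathbb{Z}\subset H^{\lambda}$ have arbitrarily fine mesh. The paper phrases this as ``every open interval in $\mathbb{R}_{+}^{*}$ meets $H^{\lambda}$'' and then appeals to the symmetry $q\mapsto -q$ for the negative reals, whereas you treat all $x\in\mathbb{R}$ at once via the floor function; the difference is purely cosmetic.
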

\medskip

\begin{proof}  Let  $x,y\in \mathbb{R}^{*}_{+}$,   such that  $x<y$.  Since
$\underset{p\longrightarrow-\infty}{lim}\frac{y-x}{\lambda^{p}(1-\lambda^{p})}=+\infty$
\ then there exists  $p\in\mathbb{Z}^{*}_{-}$  such that
$\frac{y-x}{\lambda^{p}(1-\lambda^{p})}>1$.  Therefore there
exists $q\in \mathbb{Z}$ such that
$\frac{x}{\lambda^{p}(1-\lambda^{p})}<q<\frac{y}{\lambda^{p}(1-\lambda^{p})}$.
 Since $\lambda>1$ and $p\neq0$  then   $1-\lambda^{p}>0$   and so   $x<
 q\lambda^{p}(1-\lambda^{p})<y$ and  $-y<-
 q\lambda^{p}(1-\lambda^{p})<-x$.   Hence  $\mathbb{R}^{*}_{+}\subset
 \overline{H^{\lambda}}$  and  $\mathbb{R}^{*}_{-}\subset \overline{H^{\lambda}}$. It follows that
  $H^{\lambda}$ is dense in  $\mathbb{R}$.
\end{proof}
\medskip

\begin{lem}\label{L:8} Let $\lambda>1$, $a\in \mathbb{R}^{n}\backslash \{0\}$
  and $H^{\lambda}_{a}:=\{q\lambda^{p}(1-\lambda^{p})a+a, \ \ p,q\in
\mathbb{Z}\}$. If $G$ is the group generated by  $f=(a, \lambda)\in \mathcal{H}(n,
\mathbb{R})\backslash \mathcal{S}_{n}$  and
 $h=\lambda. id_{\mathbb{R}^{n}}$, then
$H^{\lambda}_{a}\subset G(a)\subset\mathbb{R}a$. Moreover, $T_{(1-\lambda^{m})a}\in G$, for every $m\in \mathbb{Z}$.
\end{lem}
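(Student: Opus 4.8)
The plan is to first locate one fundamental translation inside $G$, then manufacture all the required translations by conjugation and composition, and finally read off the orbit of $a$ together with the invariance of the line $\mathbb{R}a$.

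The first observation is the key computational identity: composing the two generators produces a pure translation. Writing $f(x)=\lambda x+(1-\lambda)a$ and $h(x)=\lambda x$, one checks directly that $f\circ h^{-1}(x)=\lambda(\lambda^{-1}x)+(1-\lambda)a=x+(1-\lambda)a$, so $f\circ h^{-1}=T_{(1-\lambda)a}\in G$. This furnishes the base translation $T_{(1-\lambda)a}$, from which everything else is built.

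Next I would prove the ``Moreover'' assertion that $T_{(1-\lambda^{m})a}\in G$ for every $m\in\mathbb{Z}$, which is the technical heart. Two ingredients suffice. First, conjugation scales a translation: for any vector $v$ and any $k\in\mathbb{Z}$ one has the one-line computation $h^{k}\circ T_{v}\circ h^{-k}=T_{\lambda^{k}v}$. Second, the geometric-sum identity $1-\lambda^{m}=(1-\lambda)(1+\lambda+\dots+\lambda^{m-1})=\sum_{k=0}^{m-1}\lambda^{k}(1-\lambda)$ holds for $m\geq 1$. Combining them, $T_{(1-\lambda^{m})a}=\prod_{k=0}^{m-1}T_{\lambda^{k}(1-\lambda)a}=\prod_{k=0}^{m-1}h^{k}T_{(1-\lambda)a}h^{-k}$ is a product of elements of $G$ (translations commute, so the order is irrelevant and the vectors simply add), hence lies in $G$. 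The case $m=0$ is trivial, and for $m<0$, say $m=-j$ with $j>0$, one conjugates the already-established $T_{(1-\lambda^{j})a}$ by $h^{-j}$ to get $T_{(\lambda^{-j}-1)a}=T_{-(1-\lambda^{-j})a}$ and then takes its inverse, giving $T_{(1-\lambda^{m})a}\in G$.

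With all these translations available, the inclusion $H^{\lambda}_{a}\subset G(a)$ is immediate: conjugating $T_{(1-\lambda^{p})a}$ once more by $h^{p}$ yields $h^{p}T_{(1-\lambda^{p})a}h^{-p}=T_{\lambda^{p}(1-\lambda^{p})a}\in G$, and raising to the power $q$ gives $T_{q\lambda^{p}(1-\lambda^{p})a}\in G$ for all $p,q\in\mathbb{Z}$; applying this map to $a$ produces exactly $q\lambda^{p}(1-\lambda^{p})a+a$, the general element of $H^{\lambda}_{a}$. For the remaining inclusion $G(a)\subset\mathbb{R}a$, I would note that the line $\mathbb{R}a$ is invariant under both generators, since $h(ta)=\lambda ta$ and $f(ta)=(\lambda t+1-\lambda)a$ both lie in $\mathbb{R}a$, and likewise under their inverses; hence $\mathbb{R}a$ is $G$-invariant, and since $a\in\mathbb{R}a$ the whole orbit $G(a)$ remains in $\mathbb{R}a$. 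I expect the only real obstacle to be the bookkeeping in the ``Moreover'' step — matching the telescoping sum to $(1-\lambda^{m})a$ exactly and getting the exponent and sign conventions right for negative $m$; once $f\circ h^{-1}=T_{(1-\lambda)a}$ and $h^{k}T_{v}h^{-k}=T_{\lambda^{k}v}$ are in hand, the rest is routine.
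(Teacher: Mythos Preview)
Your proof is correct; the difference from the paper lies in how you manufacture the family of translations. You start from the single translation $f\circ h^{-1}=T_{(1-\lambda)a}$ and then build $T_{(1-\lambda^{m})a}$ for general $m$ by conjugation plus a telescoping geometric sum, with a separate treatment of $m<0$. The paper instead observes in one stroke that for \emph{every} $p\in\mathbb{Z}$ the composite $h^{-p}\circ f^{p}$ is already a translation: $h^{-p}\circ f^{p}(z)=\lambda^{-p}\bigl(\lambda^{p}(z-a)+a\bigr)=z+(\lambda^{-p}-1)a$, so $T_{(\lambda^{-p}-1)a}\in G$ for all $p$, and inverting gives $T_{(1-\lambda^{m})a}$ without any induction or case split. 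The paper then exhibits the orbit element $q\lambda^{p}(1-\lambda^{p})a+a$ directly as $f^{2p}\circ(h^{-p}\circ f^{p})^{q-1}\circ h^{-p}(a)$, whereas you first produce the translation $T_{q\lambda^{p}(1-\lambda^{p})a}$ (via a further conjugation by $h^{p}$ and a $q$-th power) and then apply it to $a$. Both routes work; the paper's is shorter because it exploits the full power $h^{-p}\circ f^{p}$ at once rather than bootstrapping from the base case. The invariance argument for $G(a)\subset\mathbb{R}a$ is essentially the same in both.
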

\medskip

\begin{proof}
Let  $p,q\in \mathbb{Z}$.  For every  $z\in \mathbb{R}^{n}$ we
have \ \begin{align*}
           h^{-p}\circ
f^{p}(z) &
=\lambda^{-p}(\lambda^{p}(z-a)+a)=z+(\lambda^{-p}-1)a,\ \ \ \ \ \ \ \ (1)\\
\mathrm{so} \ \ \ \   \left(h^{-p}\circ f^{p}\right)^{q-1}(z)& \
=z+ (q-1)(\lambda^{-p}-1)a.\\
\\
   \mathrm{For} \ \ z=h^{-p}(a) \
\ \mathrm{we \ have} &\ \\
  \ \ \left(h^{-p}\circ
f^{p}\right)^{q-1}(h^{-p}(a))& =\lambda^{-p}a+
(q-1)(\lambda^{-p}-1)a=q\lambda^{-p}a-(q-1)a.
\end{align*}
\begin{align*}
\mathrm{Then}\ \ \ \ \ \ \ \ \ \ \ \ \ \ \ \ \ \ \ \ \ \ \ \ \ \ \ \ \ \ \ &\ \\
 f^{2p}\circ \left(h^{-p}\circ
f^{p}\right)^{q-1}\circ
h^{-p}(a)& \ =\lambda^{2p}\left(q\lambda^{-p}a-(q-1)a-a\right)+a.\\
\\
\ & \  =q\lambda^{p}(1-\lambda^{p})a+a
 \end{align*}
 \\
It follows that   $q\lambda^{p}(1-\lambda^{p})a+a\in G(a)$  and
so  $H^{\lambda}_{a}\subset G(a)$. Since $h(\mathbb{R}a)=\mathbb{R}a$ and $f(\alpha a)=\lambda(\alpha
a-a)+a=(\lambda\alpha-\lambda+1)a$ then $\mathbb{R}a$ is
$G$-invariant, so $G(a)\subset \mathbb{R}a$.\
\\
Moreover, by taking $p=-m$ \ in (1), for some $m\in \mathbb{Z}$, we obtain $$h^{m}\circ f^{m}(z)=z+(\lambda^{m}-1)a,
 \ \ z\in \mathbb{R}^{n}.$$ Then
 $T_{(\lambda^{m}-1)a}\in G$ and so $T_{(1-\lambda^{m})a}=T^{-1}_{(\lambda^{m}-1)a}\in G$. The proof is complete.
\end{proof}
\bigskip

\begin{lem}\label{L:9} Let  $\lambda>1$,   $a,b\in \mathbb{R}^{n}$  with $a\neq b$.  If  $G$  is the group generated
by $f=(a, \lambda)$  and  $g=(b,\lambda)$ then
$\overline{G(a)}=\mathbb{R}(b-a)+a$.
\end{lem}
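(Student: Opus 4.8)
The plan is to reduce to the situation of Lemma~\ref{L:8} by an affine conjugation, and then promote the density statement of Lemma~\ref{L:7} to the orbit of the base point. Set $c=b-a\neq 0$ and conjugate by the translation $T_{-a}$: let $G'=T_{-a}\circ G\circ T_{a}$. Then $G'$ is generated by $f'=T_{-a}\circ f\circ T_{a}=(0,\lambda)=\lambda\, id_{\mathbb{R}^{n}}$ and $g'=T_{-a}\circ g\circ T_{a}=(c,\lambda)$, and a direct computation gives $G'(0)=G(a)-a$, so that $\overline{G(a)}=a+\overline{G'(0)}$. It therefore suffices to prove $\overline{G'(0)}=\mathbb{R}c$ and then translate back, since $a+\mathbb{R}c=\mathbb{R}(b-a)+a$.

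First I would establish the inclusion $\overline{G'(0)}\subset\mathbb{R}c$. Writing $h=\lambda\, id_{\mathbb{R}^{n}}$, the line $\mathbb{R}c$ is $G'$-invariant: $h(\mathbb{R}c)=\mathbb{R}c$ and $g'(\alpha c)=\lambda(\alpha c-c)+c=(\lambda\alpha-\lambda+1)c\in\mathbb{R}c$. Since $0\in\mathbb{R}c$ and $\mathbb{R}c$ is closed, $G'(0)\subset\mathbb{R}c$, hence $\overline{G'(0)}\subset\mathbb{R}c$. (This is exactly the invariance argument used in Lemma~\ref{L:8}.)

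For the reverse inclusion, note that $G'$ is precisely the group of Lemma~\ref{L:8} with center $c$ in place of $a$, so that lemma yields the translations $T_{(1-\lambda^{m})c}\in G'$ for every $m\in\mathbb{Z}$, equivalently $T_{(\lambda^{p}-1)c}\in G'$ for every $p\in\mathbb{Z}$. Applying the $q$-th power of such a translation to $0$ gives $q(\lambda^{p}-1)c\in G'(0)$, and composing further with $h^{r}=\lambda^{r}\,id_{\mathbb{R}^{n}}\in G'$ gives $q\lambda^{r}(\lambda^{p}-1)c\in G'(0)$ for all $p,q,r\in\mathbb{Z}$. Specializing to $r=p$ and letting $q$ range over $\mathbb{Z}$ produces all points $q\lambda^{p}(1-\lambda^{p})c$, i.e. $H^{\lambda}c\subset G'(0)$ with $H^{\lambda}$ as in Lemma~\ref{L:7}. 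Since $H^{\lambda}$ is dense in $\mathbb{R}$ by Lemma~\ref{L:7}, the set $H^{\lambda}c$ is dense in $\mathbb{R}c$, whence $\mathbb{R}c\subset\overline{G'(0)}$. Combining the two inclusions gives $\overline{G'(0)}=\mathbb{R}c$, and translating back finishes the proof.

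I expect the one delicate point to be the reverse inclusion. Lemma~\ref{L:8} directly gives density of the orbit $G'(c)$ of the \emph{center} $c$ of $g'$, whereas here I need density of the orbit of $0$, which is the fixed point of $h$ and a different point. The resolution is to avoid the orbit of $c$ entirely and instead combine the translation subgroup $\{T_{(\lambda^{p}-1)c}\}\subset G'$ with the scalings $h^{r}$, choosing $r=p$, which is exactly what recovers the dense set $H^{\lambda}c$ inside $G'(0)$.
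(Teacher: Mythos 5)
Your proof is correct, and it follows the same overall strategy as the paper --- conjugate by a translation to land in the normalized situation of Lemma~\ref{L:8}, prove invariance of the line for one inclusion, and invoke the density of $H^{\lambda}$ from Lemma~\ref{L:7} for the other --- but you choose a different normalization, and this changes how Lemma~\ref{L:8} gets used. The paper conjugates by $T_{-b}$, so the conjugated group is generated by $\lambda\, id_{\mathbb{R}^{n}}$ and $(a-b,\lambda)$, and the base point $a$ is sent to $a-b$, which is precisely the \emph{center} of the non-scaling generator; the main inclusion $H^{\lambda}_{a-b}\subset G'(a-b)\subset\mathbb{R}(a-b)$ of Lemma~\ref{L:8} then applies verbatim to the point whose orbit is wanted, and the proof ends immediately after quoting Lemma~\ref{L:7}. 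You conjugate by $T_{-a}$ instead, sending the base point to $0$, which is the fixed point of the scaling rather than the center of $(c,\lambda)$; as you correctly flag, this means the orbit inclusion of Lemma~\ref{L:8} does not apply to your point, so you fall back on its ``Moreover'' clause and recombine the translations $T_{(\lambda^{p}-1)c}$ with the powers $\lambda^{r}id_{\mathbb{R}^{n}}$, taking $r=p$, to recover $H^{\lambda}c$ inside $G'(0)$. Both routes are sound; the paper's choice of center is the more economical one, while yours makes explicit that the whole translation subgroup $\{T_{(1-\lambda^{m})c}:m\in\mathbb{Z}\}$ together with the scalings already suffices, and in fact places the larger set $\{q\lambda^{r}(\lambda^{p}-1)c: p,q,r\in\mathbb{Z}\}$ in the orbit of $0$.
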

\medskip

\begin{proof} Let  $\lambda>1$,   $a,b\in \mathbb{R}^{n}$  with $a\neq
b$  and  $G$ \ be the group generated by\ $f=(a, \lambda)$  and
 $g=(b,\lambda)$ . Denote by $G'=T_{-b}\circ G \circ T_{b}$, then
$G'$ is a subgroup of \ $\mathcal{H}(n, \mathbb{R})$  and it is
generated by  $h=T_{-b}\circ f \circ T_{b}$ and
$g'=T_{-b}\circ g \circ T_{b}$.  We obtain  $h=\lambda.id_{\mathbb{R}^{n}}$ and $g'=(a-b, \lambda)$.

Since $\lambda>1$, $g'\in G'\backslash \mathcal{S}_{n}$, so by Lemma ~\ref{L:8}, we have  $H^{\lambda}_{a-b}\subset G'(a-b)$, where
 $H^{\lambda}_{a-b}:=\{q\lambda^{p}(1-\lambda^{p})(a-b)+a-b,  \
p,q\in \mathbb{Z}\}$. Since $a-b\neq 0$  then
$H^{\lambda}_{a-b}$  and  $H^{\lambda}:=\{q\lambda^{p}(1-\lambda^{p}),  \
p,q\in \mathbb{Z}\}$  are  homeomorphic. By Lemma ~\ref{L:7}, we have
$H^{\lambda}$ is dense in  $\mathbb{R}$  so  $H^{\lambda}_{a-b}$ is dense
in  $\mathbb{R}(a-b)$. Since  $H^{\lambda}_{a-b}\subset G'(a-b)$
 and by Lemma ~\ref{L:8}, $G'(a-b)\subset \mathbb{R}(a-b)$, so
$\overline{G'(a-b)}=\mathbb{R}(a-b)$. We conclude that
$\overline{G(a)}=T_{b}(\mathbb{R}(a-b))=\mathbb{R}(a-b)+b$.
As $\mathbb{R}(a-b)+b=\mathbb{R}(a-b)+(a-b)+b=\mathbb{R}(b-a)+a$. It follows that $\overline{G(a)}=\mathbb{R}(b-a)+a$.
\end{proof}
\bigskip

\begin{lem}\label{L:10} Let  $\lambda>1$, $\mu\in \mathbb{R}\backslash\{0,1\}$   $a,b\in \mathbb{R}^{n}$  with $a\neq b$.
 If  $G$  is the group generated by $f=(a, \lambda)$  and  $g=(b,\mu)$  then
$\overline{G(a)}=\mathbb{R}(b-a)+a$.
\end{lem}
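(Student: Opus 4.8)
\textbf{Plan.} The goal is to upgrade Lemma ~\ref{L:9}, which handled two generators with equal ratio $\lambda>1$, to the case where the second generator $g=(b,\mu)$ has an arbitrary ratio $\mu\in\mathbb{R}\backslash\{0,1\}$. The conclusion $\overline{G(a)}=\mathbb{R}(b-a)+a$ is identical, so the strategy is to manufacture, from $f$ and $g$, a second homothety whose ratio is exactly $\lambda$ but whose center is \emph{distinct} from $a$, and then invoke Lemma ~\ref{L:9} directly. The natural candidate is a conjugate $g\circ f\circ g^{-1}=(g(a),\lambda)$, which is an element of $G\backslash\mathcal{S}_{n}$ with the correct ratio $\lambda$; by Lemma ~\ref{L:6}.(i), since $\mu\neq 1$ and we will arrange the centers to differ, this new element does not commute with $f$ and has center $g(a)\neq a$.

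\textbf{Key steps.} First I would compute $g(a)=\mu(a-b)+b$. Since $a\neq b$ and $\mu\neq 1$, one checks $g(a)-a=(\mu-1)(a-b)\neq 0$, so $c:=g(a)\neq a$ and moreover $c-a$ is a nonzero scalar multiple of $b-a$; explicitly $c-a=(\mu-1)(a-b)=(1-\mu)(b-a)$, hence $\mathbb{R}(c-a)=\mathbb{R}(b-a)$. Second, let $H$ be the subgroup of $G$ generated by the two homotheties $f=(a,\lambda)$ and $f':=g\circ f\circ g^{-1}=(c,\lambda)$, both with ratio $\lambda>1$ and with distinct centers $a\neq c$. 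Applying Lemma ~\ref{L:9} to the pair $(f,f')$ yields $\overline{H(a)}=\mathbb{R}(c-a)+a$. Third, since $H\subset G$ we have $H(a)\subset G(a)$, giving the inclusion $\mathbb{R}(c-a)+a\subset\overline{G(a)}$; combined with $\mathbb{R}(c-a)=\mathbb{R}(b-a)$ this gives $\mathbb{R}(b-a)+a\subset\overline{G(a)}$. Finally, for the reverse inclusion I would show $G(a)\subset\mathbb{R}(b-a)+a$: the affine line $L:=\mathbb{R}(b-a)+a$ contains $a,b$ and is invariant under both generators (one verifies $f(L)\subset L$ and $g(L)\subset L$ directly, since each generator is a homothety whose center lies on $L$ — indeed $a\in L$ and $b\in L$), hence $L$ is $G$-invariant and closed, so $\overline{G(a)}\subset L$.

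\textbf{Main obstacle.} The only genuinely delicate point is verifying that the line $L=\mathbb{R}(b-a)+a$ is invariant under $g=(b,\mu)$ as well as under $f=(a,\lambda)$, which is what forces equality rather than merely the inclusion $\supset$. Because both centers $a$ and $b$ lie on $L$ and a homothety fixes its center while scaling along every direction, any affine line through the center is preserved; writing a general point of $L$ as $a+t(b-a)$ and applying $f$ or $g$ returns a point of the same form, so $L$ is $G$-invariant. Since $L$ is already closed, $\overline{G(a)}\subset L$, and together with the density inclusion from Lemma ~\ref{L:9} we conclude $\overline{G(a)}=\mathbb{R}(b-a)+a$. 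The conjugation trick reducing everything to the equal-ratio Lemma ~\ref{L:9} is what makes the argument short; the risk is only in confirming the conjugate $f'$ genuinely lands in $G$ with the claimed center, which is immediate from the conjugation formula for homotheties used already in Lemma ~\ref{L:2}.
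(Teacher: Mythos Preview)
Your proposal is correct and follows essentially the same route as the paper's proof: both conjugate $f$ by $g$ to obtain $f'=g\circ f\circ g^{-1}=(g(a),\lambda)$, apply Lemma~\ref{L:9} to the pair $(f,f')$ to get density of the orbit in the line $\mathbb{R}(g(a)-a)+a=\mathbb{R}(b-a)+a$, and separately check that this line is $G$-invariant to obtain the reverse inclusion. The only cosmetic difference is the order of presentation (the paper proves invariance first, you prove it last).
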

\medskip

\begin{proof} Let  $\lambda>1$, $\mu\in \mathbb{R}\backslash\{0,1\}$   $a,b\in \mathbb{R}^{n}$  with $a\neq b$ and  $G$
 is the group generated by $f=(a, \lambda)$  and  $g=(b,\mu)$, then by Lemma ~\ref{L:6}.(i), $G$
is non abelian.
\
\\
(i) First, we will show that $\mathbb{R}(b-a)+a$ is $G$-invariant:

Let $\alpha\in \mathbb{R}$, and  $x=\alpha(b-a)+a$  we have

\begin{align*}
f(x)& =\lambda(\alpha(b-a)+a-a)+a\\
\ & =\lambda\alpha(b-a)+a\\
\end{align*}
and
\begin{align*}
f(x)& =\mu(\alpha(b-a)+a-b)+b\\
\  & =\mu(\alpha-1)(b-a)+b-a+a.\\
\ & =(1+\mu(\alpha-1))(b-a)+a
\end{align*}

  So  $f(x),\ g(x)\in \mathbb{R}(b-a)+a$.
\
\\
\\
(ii) Second, we let $g'=g\circ f \circ g^{-1}$, we have $g'=(g(a),
\lambda)\in G\backslash \mathcal{S}_{n}$. Since $a\neq b$ and $\mu\neq 1$ then
$$g(a)-a=\mu(a-b)+b-a=(1-\mu)(b-a)\neq 0.$$ If $G'$ is the subgroup
of $G$ generated by $f$ and $g'$ then by Lemma ~\ref{L:9} we have
$\overline{G'(a)}=\mathbb{R}(g(a)-a)+a$.   Since
$g(a)=\mu(a-b)+b$  then
\begin{align*}
\mathbb{R}(g(a)-a)+a& =\mathbb{R}(\mu(a-b)+b-a)+a\\
\ &\ =\mathbb{R}(1-\mu)(b-a)+a\\
\ & =\mathbb{R}(b-a)+a.
\end{align*}  By (i), we have
$\mathbb{R}(b-a)+a$  is  $G$-invariant so $\overline{G'(a)}\subset \overline{G(a)}\subset \mathbb{R}(b-a)+a$, hence
$\overline{G(a)}=\mathbb{R}(b-a)+a$.
\end{proof}
\bigskip

\begin{prop}\label{p:1} Let  $G$\ be a non abelian subgroup of
$\mathcal{H}(n,\mathbb{R})$  such that
$G\backslash\mathcal{S}_{n}\neq\emptyset$.  Then for every $x\in
E_{G}$, we have  $\overline{G(x)}=E_{G}$.
\end{prop}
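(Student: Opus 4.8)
The plan is to route the entire argument through the center set $\Gamma_{G}$, which by Lemma \ref{L:3} lies in $\overline{G(x)}$ for every $x$, and to prove the single clean fact that $\overline{\Gamma_{G}}=E_{G}$. Granting this, the proposition is immediate: since $x\in E_{G}$ and, by Lemma \ref{L:2}(ii), $E_{G}$ is a $G$-invariant affine subspace (hence closed), we get $\overline{G(x)}\subseteq E_{G}$; conversely Lemma \ref{L:3} gives $\Gamma_{G}\subseteq\overline{G(x)}$, whence $E_{G}=\overline{\Gamma_{G}}\subseteq\overline{G(x)}$.

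First I would reduce to the case where $E_{G}$ is a vector subspace. Choosing $a\in\Gamma_{G}$ (nonempty since $G\backslash\mathcal{S}_{n}\neq\emptyset$) and passing to $G'=T_{-a}\circ G\circ T_{a}$, Lemma \ref{L:1}(ii) gives $E_{G'}=T_{-a}(E_{G})$, a vector space with $0\in\Gamma_{G'}$, and one checks directly that $\overline{G'(y)}=T_{-a}\left(\overline{G(y+a)}\right)$, so the statement for $G'$ transfers verbatim to $G$. Thus I assume $0\in\Gamma_{G}$ and $E_{G}$ is a vector space; by Lemma \ref{L:1}(i) there is a basis $(a_{1},\dots,a_{p})$ of $E_{G}$ contained in $\Gamma_{G}$.

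The core step is to show that $\overline{\Gamma_{G}}$ contains the whole line through any two of its points. Let $c,d\in\Gamma_{G}$ be distinct, and pick $f=(c,\lambda),\ g=(d,\mu)\in G\backslash\mathcal{S}_{n}$. Replacing $f$ by a suitable power I may assume its ratio is $>1$, while $\mu\notin\{0,1\}$ holds automatically. Applying Lemma \ref{L:10} to $\langle f,g\rangle$ yields $\overline{\langle f,g\rangle(c)}=\mathbb{R}(d-c)+c$. Now every orbit point $h(c)$ with $h\in\langle f,g\rangle$ is the center of the conjugate $h\circ f\circ h^{-1}=(h(c),\lambda)\in G\backslash\mathcal{S}_{n}$, so $\langle f,g\rangle(c)\subseteq\Gamma_{G}$; hence $\Gamma_{G}$ is dense in the line $\mathbb{R}(d-c)+c$, and this line lies in $\overline{\Gamma_{G}}$.

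Finally I would conclude. Since $\Gamma_{G}$ is dense in $\overline{\Gamma_{G}}$ and the connecting line of any two of its points lies in $\overline{\Gamma_{G}}$, a limiting argument shows the same for any two points of the closed set $\overline{\Gamma_{G}}$; a closed set stable under passing to the line through any two of its points is an affine subspace. As $\overline{\Gamma_{G}}$ contains $0$ together with the basis $a_{1},\dots,a_{p}$, it contains $vect(a_{1},\dots,a_{p})=E_{G}$, and the reverse inclusion $\overline{\Gamma_{G}}\subseteq E_{G}$ holds because $\Gamma_{G}\subseteq\Omega_{G}\subseteq E_{G}$ with $E_{G}$ closed. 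Thus $\overline{\Gamma_{G}}=E_{G}$, which finishes the proof. The main obstacle is precisely this core step: one must resist trying to build $E_{G}$ coordinate by coordinate out of translations (the available translation sets, such as those produced by Lemma \ref{L:8}, fail to be dense), and instead exploit that conjugation keeps orbits of centers inside $\Gamma_{G}$, so that Lemma \ref{L:10} upgrades each pair of centers into a dense line of centers.
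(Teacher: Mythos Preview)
Your argument is correct, and it takes a genuinely different route from the paper's proof. The paper fixes a distinguished center $a\in\Gamma_{G}$ and proves $\overline{G(a)}=E_{G}$ via an induction on $\dim E_{G}$ (Lemma~\ref{L:12}), which in turn requires the technical basis--shifting Lemma~\ref{L:1200} guaranteeing that $(a_{1}-a,\dots,a_{p}-a)$ is still a basis; only then does it invoke Lemma~\ref{L:3} to conclude for general $x\in E_{G}$. You bypass both the induction and Lemma~\ref{L:1200} by proving the single structural fact $\overline{\Gamma_{G}}=E_{G}$: Lemma~\ref{L:10} plus $G$--invariance of $\Gamma_{G}$ (Lemma~\ref{L:2}(ii), which is essentially your conjugation remark) forces $\overline{\Gamma_{G}}$ to contain the line through any two of its points, and the elementary limiting/affine--subspace argument finishes. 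What this buys you is a shorter, more geometric proof that avoids the dimension induction entirely; what the paper's approach buys is an explicit description of $\overline{G(a)}$ built up from the lines $D_{k}$, which feeds more directly into the machinery of Lemma~\ref{L:12}. One small remark: your conjugation step is exactly Lemma~\ref{L:2}(ii), so you could simply cite it and write $\langle f,g\rangle(c)\subset G(c)\subset\Gamma_{G}$.
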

\medskip

To prove the above Proposition, we need the following Lemmas:

\begin{lem}\label{L:11} Let  $G$ be a non abelian subgroup of
$\mathcal{H}(n,\mathbb{R})$.  Let  $f\in G$, $u,v\in\mathbb{R}^{n}$  then
$f(\mathbb{R}u+v)=\mathbb{R}u+f(v)$.
\end{lem}
\medskip

\begin{proof} Every  $f\in G$  has the form
$f(x)=\lambda x+a$, $x\in \mathbb{R}^{n}$. Let $\alpha\in
\mathbb{R}$  then $f(\alpha u+v)=\lambda(\alpha
u+v)+a=\lambda\alpha u+(\lambda u+v)=\lambda\alpha u+f(v)$. So $f(\mathbb{R}u+v)=\mathbb{R}u+f(v)$.
\end{proof}
\medskip

\begin{lem}\label{L:12} Let  $G$ be a non abelian subgroup of
$\mathcal{H}(n,\mathbb{R})$  such that $E_{G}$ is a vector
subspace of $\mathbb{R}^{n}$ and $\Gamma_{G}\neq\emptyset$.  Let \ $a, a_{1},\dots,a_{p}\in
\Gamma_{G}$  such that  $(a_{1},\dots,a_{p})$ and $(a_{1}-a,\dots,a_{p}-a)$ are two basis of
$E_{G}$ and let $D_{k}=\mathbb{R}(a_{k}-a)+a$,
$1\leq k \leq p$.  If \ $D_{k}\subset\overline{G(a)}$  for every
 $1\leq k \leq p$,  then  $\overline{G(a)}=E_{G}$.
\end{lem}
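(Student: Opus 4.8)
The hypotheses give us $p$ lines $D_k = \mathbb{R}(a_k-a)+a$ all lying inside $\overline{G(a)}$, all passing through the common point $a$, with directions $a_k - a$ spanning $E_G$ (since $(a_1-a,\dots,a_p-a)$ is a basis). The goal is to fill out all of $E_G$ from these lines using the group action. My plan is to show that $\overline{G(a)}$ is stable under certain translations whose directions span $E_G$, and then bootstrap from a single line to the whole affine space by a density/closure argument.

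The key structural fact I would first establish is that $\overline{G(a)}$ is invariant under the relevant affine maps of $G$, and in particular that applying elements of $G$ to one line $D_k$ produces other lines inside $\overline{G(a)}$. Concretely, for each $k$ there is $f_k = (a_k, \lambda_k) \in G\backslash\mathcal{S}_n$ with $|\lambda_k|\neq 1$. Using Lemma~\ref{L:11}, any $f\in G$ sends a line $\mathbb{R}u+v$ to the parallel line $\mathbb{R}u + f(v)$, so $G$ moves lines to parallel lines while translating their base points. The idea is to use the homotheties $f_k$ centered at the various $a_k\in\Gamma_G$ (which lie in $\overline{G(a)}$ by Lemma~\ref{L:3}) to push one line across to sweep out a two-dimensional piece, then iterate. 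Since $\overline{G(a)}$ is closed and, by Lemma~\ref{L:2}, $E_G$ is $G$-invariant and contains the whole orbit, we always stay inside $E_G$, so the containment $\overline{G(a)}\subset E_G$ is automatic and only the reverse needs work.

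The main engine I would invoke is the translation structure coming from Lemma~\ref{L:8}: a homothety $f_k=(a_k,\lambda_k)$ together with scalings produces translations $T_{(1-\lambda_k^m)(\cdots)}$, giving arbitrarily fine translations along the direction $a_k-a$ once we know a second line crossing $D_k$. More precisely, I would argue by induction on $1\le j\le p$ that the affine span of $a, a_1, \dots, a_j$, intersected with $E_G$, is contained in $\overline{G(a)}$: the base case $j=1$ is the hypothesis $D_1\subset\overline{G(a)}$, and the inductive step takes a point already known to lie in $\overline{G(a)}$ together with the line $D_{j+1}$ and uses a homothety centered in $\Gamma_G$ (applied via Lemma~\ref{L:11}) to translate $D_{j+1}$ so that its translates, together with the closure, cover the next affine coordinate direction. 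Because the directions $a_k-a$ form a basis of $E_G$, after $p$ steps we recover a set whose closure is all of $E_G$.

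The hard part will be the inductive step: controlling how the images $f(D_k)$ of a line under group elements accumulate, i.e.\ showing that the translated copies of a line are \emph{dense} enough in the transverse direction to sweep out the full two-plane, rather than just producing a discrete family of parallel lines. This is exactly where Lemmas~\ref{L:7} and~\ref{L:9} should enter: restricting the action to the subgroup generated by two homotheties with distinct centers among $\{a, a_1,\dots,a_p\}$ gives, by Lemma~\ref{L:9} (or Lemma~\ref{L:10} when the ratios differ), an orbit closure equal to an entire line in a transverse direction, and combining this transverse line with $D_k$ via the commuting-up-to-translation structure yields the full plane. Once density in each successive two-plane is secured, taking closures and using that $\overline{G(a)}$ is closed and $G$-invariant upgrades the spanning set to all of $E_G$, giving $\overline{G(a)}=E_G$.
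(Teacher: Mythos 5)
Your overall architecture (induction, moving lines to parallel lines via Lemma~\ref{L:11}, density on single lines via Lemmas~\ref{L:9} and~\ref{L:10}) matches the paper's, but you explicitly leave open the step you yourself call ``the hard part,'' and that step is the entire content of the lemma. The issue: a single homothety applied repeatedly to $D_{j+1}$ only produces a \emph{discrete} family of parallel lines, and nothing in your sketch explains how ``its translates, together with the closure, cover the next affine coordinate direction.'' The paper's resolution is concrete and you do not reproduce it. Having established (by the induction hypothesis applied to a \emph{subgroup}) that the hyperplane $\Delta_{p-1}=a+\mathrm{vect}(a_{1}-a,\dots,a_{p-1}-a)$ lies in $\overline{G(a)}$, one takes an arbitrary $x\in E_{G}\setminus\Delta_{p-1}$, writes $x-a=z+\alpha(a_{p}-a)$ with $z\in H$, and observes that $y=z+a$ lies in $\Delta_{p-1}\cap\bigl(\mathbb{R}(a_{p}-a)+x\bigr)$. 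Since $y\in\overline{G(a)}$ one picks $f_{m}\in G$ with $f_{m}(a)\to y$; Lemma~\ref{L:11} gives $f_{m}(D_{p})=\mathbb{R}(a_{p}-a)+f_{m}(a)\to\mathbb{R}(a_{p}-a)+y=\mathbb{R}(a_{p}-a)+x\ni x$, and each $f_{m}(D_{p})\subset\overline{G(a)}$ because $D_{p}\subset\overline{G(a)}$ and $\overline{G(a)}$ is $G$-invariant and closed. It is this ``project the target onto the hyperplane, approximate the projection by orbit points, and drag the transverse line along'' mechanism that converts density of base points into density of the swept region, and it is absent from your plan.

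A second, structural gap: you propose to induct on the index $j$ within the fixed group $G$, claiming the affine span of $a,a_{1},\dots,a_{j}$ is contained in $\overline{G(a)}$. But the lemma's induction hypothesis is a statement about a group whose $E_{G}$ has dimension $p-1$; to invoke it you must pass to the subgroup $G'$ generated by the homotheties $f=(a,\lambda),f_{1}=(a_{1},\lambda_{1}),\dots,f_{p-1}=(a_{p-1},\lambda_{p-1})$, verify that $E_{G'}$ equals the affine span $\Delta_{p-1}$ (this is Lemma~\ref{L:1111}), conjugate by $T_{-a}$ so that $E_{G''}$ is a vector space, and check the hypothesis $D_{k}\subset\overline{G'(a)}$ using Lemma~\ref{L:10} pairwise. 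Without this reduction your induction does not have a well-formed statement to induct on. Both gaps are fixable and the ingredients you cite are the right ones, but as written the proof does not close.
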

\medskip

\begin{proof} The proof is done by induction on  $\mathrm{dim}(E_{G})=p\geq 1$.
\\
For $p=1$,  by Lemma ~\ref{L:6}.(iii) there exist
$a,b\in\Gamma_{G}$  with  $a\neq b$, since $G$ is non abelian and $\Gamma_{G}\neq\emptyset$.  In this case
$D_{1}=\mathbb{R}(b-a)+a=\mathbb{R}=E_{G}$,  then if $D_{1}\subset \overline{G(a)}$ so  $\overline{G(a)}=E_{G}$.
$\overline{\Gamma_{G}}=E_{G}$.
\
\\
Suppose that Lemma ~\ref{L:12} is true until dimension $p-1$. Let  $G$ be
a non abelian subgroup of \ $\mathcal{H}(n,\mathbb{R})$ with $\Gamma_{G}\neq\emptyset$ and let
$a, a_{1},\dots,a_{p}\in \Gamma_{G}$  such that
$(a_{1},\dots,a_{p})$  is a basis of  $E_{G}$.  Suppose that
$D_{k}\subset\overline{G(a)}$  for every  $1\leq k \leq p$.
\\
Denote by $H$  the vector
subspace of  $E_{G}$ generated by
$(a_{1}-a),\dots,(a_{p-1}-a)$ and $\Delta_{p-1}=T_{a}(H)$. We have  $\Delta_{p-1}$  is an
affine subspace of $E_{G}$  and it contains
$a,a_{1},\dots,a_{p-1}$.
\\
Set $\lambda,\lambda_{k}\in \Gamma_{G}$,   $1\leq k
\leq p-1$   such that $f=(a,\lambda), f_{k}=(a_{k},\lambda_{k})\in G\backslash\mathcal{S}_{n}$. Suppose that  $\lambda>1$  and
 $\lambda_{k}>1$, for every  $1\leq k \leq p-1$ (leaving to replace $f$ and $f_{k}$ respectively by
$f^{2}$ or $f^{-2}$ and  by $f^{2}_{k}$ or $f^{-2}_{k}$). Let
$G_{k}$ be the group generated by $f$ and $f_{k}$, $1\leq k \leq
p-1$. By Lemma ~\ref{L:10} we have $\overline{G_{k}(a)}=D_{k}$. Let  $G'$
be the subgroup of $G$ generated by $f$, $f_{1}$,\dots,$f_{p-1}$,   then
$D_{k}\subset \overline{G'(a)}$ for every $1\leq k \leq p-1$.
\\
By Lemma ~\ref{L:1111} we have $E_{G'}=\Delta_{p-1}$. Let
$G''=T_{-a}\circ G'\circ T_{a}$,   by Lemma ~\ref{L:1}.(ii) we have
$E_{G''}=T_{-a}(\Delta_{p-1})=H$  and
$D'_{k}=T_{-a}(D_{k})\subset \overline{G''(0)}$  for every $1\leq
k \leq p-1$. By induction hypothesis applied to $G''$ we have
$\overline{G''(0)}=H$  so  $\overline{G'(a)}=\Delta_{p-1}$.
Since $G'(a)\subset G(a)$, then $$\Delta_{p-1}\subset \overline{G(a)}\ \ \ \ \ \ \ \ \ \
(1).$$\
\\
Let  $x\in E_{G}\backslash \Delta_{p-1}$  and
$D=\mathbb{R}(a_{p}-a)+x$. Since $(a_{1}-a,\dots,a_{p}-a)$ is a basis of $E_{G}$, so
$H\oplus\mathbb{R}(a_{p}-a)=E_{G}$ with $x,a\in E_{G}$,
then $x-a=z+\alpha(a_{p}-a)$ with $z\in H$ and
$\alpha\in\mathbb{R}$. Let $y=z+a$, as  $H+a=\Delta_{p-1}$ we
have  $y\in\Delta_{p-1}$, and
\begin{align*}
y& =-\alpha(a_{p}-a)+x\in D\\
\ & =z+a\\
\ & =x-a-\alpha(a_{p}-a)+a\\
\ & =-\alpha(a_{p}-a)+x\in D
\end{align*}

Hence \ \ $y\in \Delta_{p-1}\cap D.$
\
\\
\\
 By (1) we have $y\in
\overline{G(a)}$.  Then there exists a sequence
$(f_{m})_{m\in\mathbb{N}}$  in  $G$  such that \
$\underset{m\longrightarrow+\infty}{lim}f_{m}(a)=y.$ For every
$m\in\mathbb{N}$  denote by  $f_{m}=(b_{m}, \lambda_{m})$.
\
\\
\\
By Lemma ~\ref{L:11} we have
$f_{m}(D_{p})=f_{m}(\mathbb{R}(a_{p}-a)+a)=\mathbb{R}(a_{p}-a)+f_{m}(a)$.
Since  $\underset{m\longrightarrow+\infty}{lim}f_{m}(a)=y$  then
\
$$\underset{m\longrightarrow+\infty}{lim}f_{m}(D_{p})=\mathbb{R}(a_{p}-a)+y$$\
\\
 As  $y\in D$  then  $y-x\in
\mathbb{R}(a_{p}-a)$,  thus
$\mathbb{R}(a_{p}-a)+y=\mathbb{R}(a_{p}-a)+x=D$  and so
$\underset{m\longrightarrow+\infty}{lim}f_{m}(D_{p})=D$.\
\\
Since $D_{p}\subset \overline{G(a)}$  then \ $D\subset
\overline{G(a)}$, so  $x\in \overline{G(a)}$, hence
$E_{G}\backslash \Delta_{p-1}\subset \overline{G(a)}$. By
$(1)$ we obtain  $E_{G}\subset \overline{G(a)}$. Since $\Gamma_{G}\neq\emptyset$ then by Lemma
~\ref{L:2}.(ii), we have $E_{G}$  is $G$-invariant, so $G(a)\subset E_{G}$ since $a\in
E_{G}$. It follows that
$\overline{G(a)}=E_{G}$.
\end{proof}
\bigskip

\begin{proof}[Proof of Proposition ~\ref{p:1}] Let $G$ be a non abelian
subgroup of $\mathcal{H}(n, \mathcal{R})$. Since $G\backslash \mathcal{S}_{n}\neq\emptyset$ then $\Gamma_{G}\neq\emptyset$
 and suppose that
$E_{G}$  is a vector subspace of $\mathbb{R}^{n}$, (one can replace $G$ by  $G'=T_{-a}\circ G \circ T_{a}$, for some $a\in \Gamma_{G}$).

First, we will prove that there exists $a\in\Gamma_{G}$ such that
 $\overline{G(a)}=E_{G}$.  By Lemmas ~\ref{L:1},(i) and ~\ref{L:1200}, there exists
$a,a_{1},\dots,a_{p}\in\Gamma_{G}$  such that  $(a_{1},\dots,a_{p})$ and
 $(a_{1}-a,\dots,a_{p}-a)$ are two basis of $E_{G}$. Denote by $D_{k}=\mathbb{R}(a_{k}-a)+a$, $1\leq k \leq p$.
Since $a\in \Gamma_{G}$, then there exists $f\in G$ such that
$f=(a,\lambda)$. Suppose that $\lambda>1$ (one can replace $f$ by $f^{2}$
or $f^{-2}$). By Lemma ~\ref{L:10},  $D_{k}\subset
\overline{G(a)}$, for every $1\leq k \leq p$. By Lemma ~\ref{L:12}, we have
$\overline{G(a)}=E_{G}$.
\medskip

Second, let $x\in E_{G}$ and by
Lemma ~\ref{L:3} we have $\Gamma_{G}\subset \overline{G(x)}$ and by Lemma
~\ref{L:2}.(ii), $\Gamma_{G}$ is $G$-invariant. Since  $a\in \Gamma_{G}$
 then $$E_{G}=\overline{G(a)}\subset
\overline{\Gamma_{G}}\subset \overline{G(x)}.$$
It follows that $\overline{G(x)}=E_{G}$ since $E_{G}$ is $G$-invariant (Lemma ~\ref{L:2},(ii)). The proof is complete.
\end{proof}
\bigskip

\begin{prop}\label{p:2} Let  $G$ be a non abelian subgroup of
$\mathcal{H}(n,\mathbb{R})$.  Suppose that
  $G\backslash\mathcal{S}_{n}\neq\emptyset$ and $E_{G}$ is a vector space.  Then
  for every $x\in \mathbb{R}^{n}\backslash E_{G}$,  we have
   $\overline{G(x)}=\overline{\Lambda_{G}}.x+E_{G}.$
\end{prop}
\medskip

To prove the above Proposition, we need the following Lemma:
\begin{lem}\label{L:13} Let  $G$ be a non abelian subgroup of
$\mathcal{H}(n,\mathbb{R})$  such that
$G\backslash\mathcal{S}_{n}\neq\emptyset$. For every
$\lambda\in \Lambda_{G}\backslash\{-1,1\}$ and for every \ $b\in E_{G}$,
there exists a sequence $(f_{m})_{m\in\mathbb{N}}$ in $G$ such
that  $\underset{m\longrightarrow+\infty}{lim}f_{m}=f$, with
$f=(b,\lambda)\in \mathcal{H}(n, \mathbb{R})\backslash\mathcal{S}_{n}$.
\end{lem}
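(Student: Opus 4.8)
Let $G$ be a non abelian subgroup of $\mathcal{H}(n,\mathbb{R})$ with $G\backslash\mathcal{S}_n\neq\emptyset$. For every $\lambda\in\Lambda_G\backslash\{-1,1\}$ and every $b\in E_G$, there exists a sequence $(f_m)$ in $G$ with $f_m\to f=(b,\lambda)$.

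Let me sketch my proof plan.

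The statement says that $f=(b,\lambda)$ — a homothety with ratio $\lambda$ centered at an arbitrary point $b$ of $E_G$ — can be approximated by elements of $G$, even though $(b,\lambda)$ itself need not lie in $G$. The plan is to first fix an element $g=(c,\lambda)\in G\backslash\mathcal{S}_n$ realizing the ratio $\lambda$ (such a $g$ exists precisely because $\lambda\in\Lambda_G\backslash\{-1,1\}$, so its center $c$ lies in $\Gamma_G\subset E_G$). The idea is then to conjugate $g$ by a well-chosen sequence of elements of $G$ so as to move its center $c$ to an arbitrary target $b\in E_G$ in the limit. Concretely, if $h\in G$ then $h\circ g\circ h^{-1}=(h(c),\lambda)$, a homothety of the same ratio but centered at $h(c)$; so it suffices to produce a sequence $(h_m)$ in $G$ with $h_m(c)\to b$. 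Since $c\in\Gamma_G$, Proposition~\ref{p:1} (applied either directly or after the standard reduction $G'=T_{-a}\circ G\circ T_a$ to the vector-space case) gives $\overline{G(c)}=E_G$, so every $b\in E_G$ is a limit of points $h_m(c)$ with $h_m\in G$.

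The main steps, in order, are: (i) invoke $\lambda\in\Lambda_G\backslash\{-1,1\}$ to extract $g=(c,\lambda)\in G\backslash\mathcal{S}_n$ with $c\in\Gamma_G\subset E_G$; (ii) use $\overline{G(c)}=E_G$ from Proposition~\ref{p:1} to pick $h_m\in G$ with $h_m(c)\to b$; (iii) set $f_m:=h_m\circ g\circ h_m^{-1}\in G$, which by direct computation equals $(h_m(c),\lambda)\in G\backslash\mathcal{S}_n$ since $|\lambda|\neq 1$; (iv) verify the convergence $f_m\to f$. For step (iv) I would write out, for $x\in\mathbb{R}^n$,
\begin{align*}
f_m(x) &= \lambda\bigl(x-h_m(c)\bigr)+h_m(c) = \lambda x + (1-\lambda)h_m(c),\\
f(x) &= \lambda x + (1-\lambda)b,
\end{align*}
so that $f_m(x)-f(x)=(1-\lambda)\bigl(h_m(c)-b\bigr)\to 0$ uniformly on bounded sets, which is the relevant notion of convergence here; hence $f_m\to f$.

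The main obstacle is step (ii): one must be sure that $\overline{G(c)}=E_G$ genuinely applies. This is where the hypothesis $b\in E_G$ (rather than an arbitrary point of $\mathbb{R}^n$) is essential, and where one leans on Proposition~\ref{p:1}, whose conclusion $\overline{G(x)}=E_G$ holds exactly for $x\in E_G$. Since $c\in\Gamma_G\subset E_G$, the proposition gives $\overline{G(c)}=E_G\ni b$, furnishing the required approximating sequence $(h_m)$. A minor subtlety is confirming $f=(b,\lambda)\in\mathcal{H}(n,\mathbb{R})\backslash\mathcal{S}_n$, which is immediate from $\lambda\notin\{-1,1\}$; the remaining computations in steps (iii)–(iv) are routine.
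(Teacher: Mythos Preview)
Your proof is correct and follows essentially the same approach as the paper: pick $g=(c,\lambda)\in G\backslash\mathcal{S}_n$, use Proposition~\ref{p:1} to get $h_m\in G$ with $h_m(c)\to b$, and set $f_m=h_m\circ g\circ h_m^{-1}=(h_m(c),\lambda)$. Your write-up is in fact slightly more detailed than the paper's, since you spell out the convergence $f_m(x)-f(x)=(1-\lambda)(h_m(c)-b)\to 0$ explicitly.
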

\bigskip

\begin{proof} Let  $\lambda\in
\Lambda_{G}\backslash\{-1,1\}$ and  $b\in E_{G}$.   Given  $g=(a,\lambda)\in
G\backslash \mathcal{S}_{n}$, so $a\in\Gamma_{G}\subset E_{G}$. By Proposition ~\ref{p:1}, we have
$\overline{G(a)}=E_{G}$.  Then there exists a sequence
$(g_{m})_{m\in\mathbb{N}}$ in $G$ such that
$\underset{m\longrightarrow+\infty}{lim}g_{m}(a)=b$. For every
$m\in \mathbb{N}$, denote by  $f_{m}=g_{m}\circ g \circ
g^{-1}_{m}$, so  $f_{m}=(g_{m}(a),\lambda)$. Hence
$\underset{m\longrightarrow+\infty}{lim}f_{m}=f$, with
$f=(b,\lambda)$.
\end{proof}
\bigskip

\begin{lem}\label{L:130} Let  $G$ be a non abelian subgroup of
$\mathcal{H}(n,\mathbb{R})$  such that $E_{G}$ is a vector space and
$G\backslash\mathcal{S}_{n}\neq\emptyset$. Then:\
\\
(i)  For every
$b\in E_{G}$ there exists a sequence $(T_{b_{m}})_{m\in\mathbb{Z}}$ in $G\cap \mathcal{T}_{n}$ such
that  $\underset{m\to-\infty}{lim}T_{b_{m}}=T_{b}$.\
\\
(ii) If $-1\in\Lambda_{G}$, then for every
$b\in E_{G}$ there exists a sequence $(S_{m})_{m\in\mathbb{N}}$ in $G\cap (\mathcal{S}_{n}\backslash \mathcal{T}_{n})$ such
that  $\underset{m\longrightarrow+\infty}{lim}S_{m}=S=(b,-1)$.
\end{lem}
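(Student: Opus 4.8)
The plan is to manufacture genuine translations inside $G$ and then move a fixed symmetry around by them. For part (i) the key observation is that two homotheties of the same ratio compose to a translation: if $f=(a,\lambda)$ and $g=(c,\lambda)$ lie in $G$, a direct computation gives $f\circ g^{-1}=T_{(\lambda-1)(c-a)}\in G\cap\mathcal{T}_n$. So it suffices to produce ratio-$\lambda$ elements of $G$ whose centers sweep out a dense subset of $E_G$, and then to read off the corresponding translation vectors.

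First I would fix $\lambda\in\Lambda_G\setminus\{-1,1\}$ and an element $h=(a,\lambda)\in G\setminus\mathcal{S}_n$, which exist because $G\setminus\mathcal{S}_n\neq\emptyset$. By Proposition~\ref{p:1} we have $\overline{G(a)}=E_G$, so for any prescribed target point of $E_G$ there is a sequence $g_m\in G$ with $g_m(a)$ converging to it; the conjugate $g_m\circ h\circ g_m^{-1}=(c_m,\lambda)$, where $c_m:=g_m(a)$, then lies in $G$ and has ratio exactly $\lambda$. Given $b\in E_G$, I would take the target $a+\frac{b}{\lambda-1}$ (which lies in $E_G$ since $E_G$ is a vector space), so that $c_m\to a+\frac{b}{\lambda-1}$; then $h\circ(c_m,\lambda)^{-1}=T_{(\lambda-1)(c_m-a)}\in G\cap\mathcal{T}_n$ and $(\lambda-1)(c_m-a)\to b$. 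Reindexing this convergent sequence over $\mathbb{Z}$ produces $(T_{b_m})_{m\in\mathbb{Z}}$ with $\lim_{m\to-\infty}T_{b_m}=T_b$, which is (i).

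For part (ii) I would conjugate by the translations just produced. Since $-1\in\Lambda_G$, there is $g=(a_0,-1)\in G$, and because its ratio is $-1\neq 1$ it automatically lies in $\mathcal{S}_n\setminus\mathcal{T}_n$, with $a_0=g(0)\in\gamma_G\subseteq E_G$. A short computation gives $T_w\circ g\circ T_{-w}=(a_0+2w,-1)$, again a symmetry in $\mathcal{S}_n\setminus\mathcal{T}_n$. Given $b\in E_G$, the point $\frac{b-a_0}{2}$ lies in $E_G$, so by part (i) there are translations $T_{w_m}\in G$ with $w_m\to\frac{b-a_0}{2}$; then $S_m:=T_{w_m}\circ g\circ T_{-w_m}=(a_0+2w_m,-1)\in G\cap(\mathcal{S}_n\setminus\mathcal{T}_n)$ and $S_m\to(b,-1)=S$, which is (ii).

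The main obstacle lies entirely in part (i): recognizing that equal-ratio homotheties produce translations, and then harnessing the density statement of Proposition~\ref{p:1} to make the resulting translation vectors dense throughout all of $E_G$ rather than merely along a single direction. The hypothesis that $E_G$ is a vector space is used twice, to guarantee that the auxiliary targets $a+\frac{b}{\lambda-1}$ and $\frac{b-a_0}{2}$ remain inside $E_G$. Once (i) is established, part (ii) is routine: it only requires the conjugation identity for symmetries together with the elementary observation that a map of ratio $-1$ can never be a translation, so the approximants $S_m$ are guaranteed to stay in $\mathcal{S}_n\setminus\mathcal{T}_n$.
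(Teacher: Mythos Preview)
Your proof is correct. For part (i) you take a more direct route than the paper: the paper first treats the special case $b\in\Gamma_G$ by invoking the explicit translations $T_{(1-\lambda^m)(b-g(b))}$ built in Lemma~\ref{L:8}, and only then passes to a general $b\in E_G$ via a double-indexed limit using Proposition~\ref{p:1}. You bypass that first stage entirely by combining Proposition~\ref{p:1} (equivalently Lemma~\ref{L:13}) with the identity $f\circ g^{-1}=T_{(\lambda-1)(c-a)}$ for two ratio-$\lambda$ homotheties, which produces the needed translations in a single step; this is shorter and sidesteps the paper's double limit. For part (ii) the two arguments are essentially the same, the only cosmetic difference being that the paper left-composes the fixed symmetry with the approximating translations ($S_m=T_{a_m}\circ f$ with target vector $b-a$), whereas you conjugate ($S_m=T_{w_m}\circ g\circ T_{-w_m}$ with target $\tfrac{b-a_0}{2}$); both yield ratio $-1$ elements of $G$ converging to $(b,-1)$.
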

\bigskip

\begin{proof}(i) $\bullet$ First, suppose that $b\in\Gamma_{G}$, then there exists $f\in G\backslash \mathcal{S}_{n}$
with $f=(b,\lambda)$. Since $G$ is non abelian set $g\in G$ such that
$f\circ g \neq g\circ f$.  Set $h=g\circ f \circ g^{-1}$, so $h=(g(b),\lambda)$. Let $G'=T_{-g(b)}\circ G\circ T_{g(b)}$, $f'=T_{-g(b)}\circ f\circ T_{g(b)}$ and
 $h'=T_{-g(b)}\circ h\circ T_{g(b)}$, so $f'=(b-g(b), \lambda)$ and $h'=(0,\lambda)=\lambda id_{\mathbb{R}^{n}}$. By Lemma ~\ref{L:9}, for every
 $m\in \mathbb{Z}$, $T'_{m}=T_{(1-\lambda^{m})(b-g(b))}\in G'$. Write $T_{b_{m}}=T_{g(b)}\circ T'_{m}\circ T_{-g(b)}$,
 so $$b_{m}=(1-\lambda^{m})(b-g(b))+g(b)=(1-\lambda^{m})b+\lambda^{m}g(b), \ \ m\in\mathbb{Z}.$$

 Since $|\lambda|\neq1$, suppose that $\lambda>1$, so $\underset{m\to -\infty}{lim}(1-\lambda^{m})a+\lambda^{m}g(b)=b$.
 It follows that the sequence $(T_{b_{m}})_{m}\in G\cap \mathcal{T}_{n}$ and $\underset{m\to-\infty}{lim}T_{b_{m}}=T_{b}$.\
 \\
  $\bullet$ Now, suppose that $b\in E_{G}$ and let $a\in\Gamma_{G}$. By Proposition~\ref{p:1}, $\overline{G(a)}=E_{G}$, so
  there exists a sequence $(g_{k})_{k}$ in $G$ such that  $\underset{k\longrightarrow+\infty}{lim}g_{k}(a)=b$. By above
  state, there exists a sequence $(T_{a_{m}})_{m}\in G\cap \mathcal{T}_{n}$ such that $\underset{m\to-\infty}{lim}T_{a_{m}}=T_{a}$. Set
  $T_{b_{m,k}}=g_{k}\circ T_{a_{m}}\circ g_{k}^{-1}$, one has $T_{b_{m,k}}=(g_{k}(a_{m}), 1)\in G\cap \mathcal{T}_{n}$. We have
  $\underset{m\to-\infty}{lim}b_{m,k}=\underset{m\to-\infty}{lim}g_{k}(a_{m})=g_{k}(a)$, so $$\underset{_{\|(k,-m)\|\to+\infty}}{lim}b_{m,k}=\underset{k\to+\infty}{lim}g_{k}(a)=b.$$
   So $\underset{_{\|(k,-m)\|\to+\infty}}{lim}T_{b_{m,k}}=T_{b}$. This complete the proof of (i).\
   \\
   \\
   (ii) Suppose that $-1\in\Lambda_{G}$ and let $b\in E_{G}$. Then there exists $f=(a,-1)\in G\cap \mathcal{S}_{n}$. By Lemma~\ref{L:2}.(i),
    $a=f(0)\in E_{G}$, so $b-a\in E_{G}$, since $E_{G}$ is a vector space. By (i),
   there exists a sequence $(T_{a_{m}})_{m\in\mathbb{Z}}$ in $G\cap \mathcal{T}_{n}$ such
that  $\underset{m\to-\infty}{lim}T_{a_{m}}=T_{b-a}$. Set $S_{m}=T_{a_{m}}\circ f$. We have $S_{m}=(a+a_{m},-1)\in G\cap \mathcal{S}_{n}$.
Since $\underset{m\to +\infty}{lim}a_{m}=b-a$, then $\underset{m\to +\infty}{lim}a+a_{m}=b$, so $\underset{m\to +\infty}{lim}S_{m}=S=(b,-1)$. The proof is complete.
\end{proof}
\bigskip

\begin{proof}[Proof of Proposition ~\ref{p:2}] Let  $G$ be a non abelian subgroup of
$\mathcal{H}(n,\mathbb{R})$  such that
$G\backslash\mathcal{S}_{n}\neq\emptyset$ and $E_{G}$ is a vector space. Let $x\in U=\mathbb{R}^{n}\backslash E_{G}$.\
\\
Lest' prove that $\overline{\Lambda_{G}}.x+E_{G}\subset\overline{G(x)}$: \ Let $\alpha\in\Lambda_{G}$
and  $a\in E_{G}$.
\\
$\bullet$ Suppose that $\alpha\in\Lambda_{G}\backslash\{-1,1\}$. Since $E_{G}$ is a vector space, $a'=\frac{a}{1-\alpha}\in E_{G}$.
 By Lemma ~\ref{L:13} there exists a sequence $(f_{m})_{m}$ in $G$ such that
  $\underset{m\longrightarrow +\infty}{lim}f_{m}=f=(a', \alpha)\in G\backslash \mathcal{S}_{n}$. Then
  \begin{align*}
  f(x)& =\alpha (x-a')+a'\\
  \ &\ =\alpha x+ (1-\alpha)a'\\
  \ & =\alpha x+ a\in \overline{G(x)},
  \end{align*}
    so  $$\left(\Lambda_{G}\backslash\{1,1\}\right).x+E_{G}\subset \overline{G(x)}.$$\
    \\
    $\bullet$ Suppose that $\alpha\in\Lambda_{G}\cap\{-1,1\}$.\
    \\
    - If $\alpha=1$, by Lemma ~\ref{L:130}.(i), there exists a sequence $(T_{a_{m}})_{m}$ in $G$ such that
  $\underset{m\longrightarrow +\infty}{lim}T_{a_{m}}=T_{a}$. So $T_{a}(x)=x+a\in \overline{G(x)}$.\
  \\
  - If $\alpha=1$, by Lemma ~\ref{L:130}.(i), there exists a sequence $(S_{m})_{m}$ in $G\cap (\mathcal{S}_{n}\backslash \mathcal{T}_{n})$ such that
  $\underset{m\longrightarrow +\infty}{lim}S_{m}=S=(a, -1)$. So $S(x)=-x+a\in \overline{G(x)}$.\
\\
It follows that $\alpha x +a\in \overline{G(x)}$ and so $$\left(\Lambda_{G}\cap\{-1,1\}\right)x+E_{G}\subset \overline{G(x)}.$$

This proves that $\overline{\Lambda_{G}}.x+E_{G}\subset\overline{G(x)}$.\
\\
\\
 Conversely,\ let's prove that $G(x)\subset \Lambda_{G}.x+E_{G}$.\ Let $f\in G$.
 \\
 $\bullet$  Suppose that   $f=(a,\lambda)\in G\backslash \mathcal{S}_{n}$.  By Lemma ~\ref{L:2}.(i), $f(0)=(1-\lambda)a\in E_{G}$ since $E_{G}$ is a vector space.
\\
 Then $f(x)=\lambda (x-a) +a=\lambda x+ (1-\lambda)a\in \Lambda_{G}.x+E_{G}$.\
 \\
 $\bullet$  Suppose that   $f=(a,\varepsilon)\in G\cap \mathcal{S}_{n}$, so $f(x)=\varepsilon x+a\in\Lambda_{G}.x+E_{G}$, since
 by Lemma ~\ref{L:2}.(i), $f(0)=a\in E_{G}$.\
 \\
 It follows that  $G(x)\subset \Lambda_{G}.x+E_{G}$. Therefore
  $\overline{G(x)}\subset \overline{\Lambda_{G}}.x+E_{G}$. Hence $\overline{G(x)}=\overline{\Lambda_{G}}.x+E_{G}$.
\end{proof}
\bigskip

\begin{lem}\label{L:5} If there exists  $\lambda,\mu\in\Lambda_{G}$  such that $\lambda\mu<0$ and
$\frac{log|\lambda|}{log|\mu|}\notin\mathbb{Q}$, then
$\overline{\Lambda_{G}}=\mathbb{R}$.
\end{lem}
\medskip

\begin{proof} Suppose that $\lambda<0<\mu$. Let $H_{+}:=\{\lambda^{2p}\mu^{2q},  p,q\in
\mathbb{Z}\}$ and $H_{-}:=\lambda.H_{+}$. See that $H_{-}\subset \Lambda_{G}$ and so  $H_{+}\cup H_{-}\subset
\Lambda_{G}$. Set  $f: ]0,+\infty[\longrightarrow \mathbb{R}$, the
homeomorphism defined by  $f(x)=log x$, so
$f(H_{+}):=\mathbb{Z}+\frac{log|\lambda|}{log|\mu|}\mathbb{Z}$. As
$\frac{log|\lambda|}{log|\mu|}\notin\mathbb{Q}$ then $f(H_{+})$ is
dense in $\mathbb{R}$, so $H_{+}$ and $H_{-}$ are dense
respectively in $]0,+\infty[$ and  in $]-\infty,0[$. We deduce
that $\overline{\Lambda_{G}}=\mathbb{R}$.
\end{proof}
\bigskip

\section{\textbf{Some results for non abelian subgroup of $\mathcal{S}_{n}$}}
In this case, $G$ is a non abelian subgroup of $\mathcal{S}_{n}$,
then it contains necessarily  an affine symmetry. In the
following, recall that $G_{1}=G\cap \mathcal{T}_{n}$ and every $f\in
\mathcal{S}_{n}$ is denoted by $f=(a,\varepsilon)$, where $f: x\longmapsto\varepsilon x +a$.
Denote by   $\delta_{G}:=\{f(0),\ \ f\in G\cap (\mathcal{S}_{n}\backslash
\mathcal{T}_{n}) \}$.
\
\\
We use the
following lemmas and propositions to prove Theorem ~\ref{T:1} and
above Corollaries:

\begin{lem}\label{L:14} Let $G$ be a non abelian  subgroup of $\mathcal{S}_{n}$. Then:
\begin{itemize}
\item[(i)] $G_{1}(0)$
is an additif subgroup of $\mathbb{R}^{n}$.
\item[(ii)] \ $\delta_{G}\neq\emptyset$.
 \item [(iii)]  For every  $f\in G\backslash \mathcal{T}_{n}$,\ we have \  $f(G_{1}(0))= \delta_{G}$ and $f(\delta_{G})= G_{1}(0)$.
\end{itemize}
\end{lem}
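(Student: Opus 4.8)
The plan is to treat the three parts in order, with part (iii) carrying the real content. Throughout I will use the recorded shape of symmetries: every $f\in G\cap\mathcal{S}_{n}$ is written $f=(a,\varepsilon)$ with $f(x)=\varepsilon x+a$ and $\varepsilon\in\{-1,1\}$, so $f\in\mathcal{T}_{n}$ exactly when $\varepsilon=1$, while $f\in\mathcal{S}_{n}\backslash\mathcal{T}_{n}$ means $\varepsilon=-1$, and in that case $f$ is an involution, since $f\circ f(x)=-(-x+a)+a=x$, i.e. $f^{-1}=f$.

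For (i), the assignment $T_{b}\longmapsto b=T_{b}(0)$ is a group isomorphism of $\mathcal{T}_{n}$ onto $(\mathbb{R}^{n},+)$; since $G_{1}=G\cap\mathcal{T}_{n}$ is a subgroup of $\mathcal{T}_{n}$, its image $G_{1}(0)$ is a subgroup of $(\mathbb{R}^{n},+)$. For (ii), I will use non-abelianness: translations commute with one another, so if $G$ contained no element of $\mathcal{S}_{n}\backslash\mathcal{T}_{n}$ we would have $G\subset\mathcal{T}_{n}$ and $G$ abelian, contrary to hypothesis. Hence some $f=(a,-1)$ lies in $G$, giving $f(0)=a\in\delta_{G}$, so $\delta_{G}\neq\emptyset$.

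The heart is (iii). Fix $f=(a,-1)\in G\backslash\mathcal{T}_{n}$. The argument rests on two elementary composition identities. First, for any $T_{b}\in G_{1}$ one computes $f\circ T_{b}(x)=-(x+b)+a=-x+(a-b)$, so $f\circ T_{b}=(a-b,-1)\in G\cap(\mathcal{S}_{n}\backslash\mathcal{T}_{n})$; evaluating at $0$ gives $f(b)=a-b\in\delta_{G}$, which yields $f(G_{1}(0))\subseteq\delta_{G}$. Conversely, for any $g=(c,-1)$ with $c\in\delta_{G}$ one computes $f\circ g(x)=-(-x+c)+a=x+(a-c)$, so $f\circ g=T_{a-c}\in G_{1}$; thus $a-c\in G_{1}(0)$ and, since $f(a-c)=-(a-c)+a=c$, we get $c\in f(G_{1}(0))$, giving $\delta_{G}\subseteq f(G_{1}(0))$. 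Together these prove $f(G_{1}(0))=\delta_{G}$.

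Finally, the identity $f(\delta_{G})=G_{1}(0)$ will follow at once from the involution property: applying $f$ to both sides of $f(G_{1}(0))=\delta_{G}$ and using $f\circ f=\mathrm{id}$ gives $f(\delta_{G})=f(f(G_{1}(0)))=G_{1}(0)$. There is no serious obstacle here; the only point demanding care is the bookkeeping in the composition formulas, together with the verification that the composite maps genuinely lie in $G$ (which holds because $f$, $T_{b}$ and $g$ all lie in the group $G$) and in the correct stratum $\mathcal{T}_{n}$ or $\mathcal{S}_{n}\backslash\mathcal{T}_{n}$, which is precisely what makes $f\circ T_{b}(0)$ and $f\circ g(0)$ land in $\delta_{G}$ and $G_{1}(0)$ respectively.
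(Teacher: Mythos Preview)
Your proof is correct and follows essentially the same approach as the paper's own argument: the same composition identities $f\circ T_{b}=(a-b,-1)$ and $f\circ g=T_{a-c}$ establish the two inclusions for $f(G_{1}(0))=\delta_{G}$, and the involution property $f^{-1}=f$ then yields $f(\delta_{G})=G_{1}(0)$. The only differences are cosmetic (your explicit mention of the isomorphism $T_{b}\mapsto b$ for part (i), and minor naming choices).
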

\bigskip

\begin{proof}The proof of (i) is obvious.
\\
(ii) If  $\delta_{G}=\emptyset$  then $G\cap (\mathcal{S}_{n}\backslash
\mathcal{T}_{n})=\emptyset$, so  $G$ is a
subgroup of $\mathcal{T}_{n}(\mathbb{R})$,  hence  $G$ is abelian,
a contradiction.\\
\\
(iii) Let  $f\in G\backslash \mathcal{T}_{n}$,
 $b\in G_{1}(0)$ and  $g=(b,1)\in G_{1}$ . Then for every  $x\in \mathbb{R}^{n}$  we have
$f\circ g(x)=f(x+b)=-x-b+a$, so  $f\circ g=(-b+a,-1)$. Hence
$f(b)=f\circ g(0)=-b+a\in \delta_{G}$.\\ Conversely, let
$b\in\delta_{G}$ and $g=(b,-1)\in G\backslash
\mathcal{T}_{n}$ such that $g(0)=b$. We have $f\circ
g(x)=f(-x+b)=x-b+a$, \ so $f\circ g=(-b+a,1)\in G_{1}$, thus
$c=-b+a\in G_{1}(0)$. Hence $b=-c+a=f(c)$ and so $b\in
f(G_{1}(0))$. It follows that $f(G_{1}(0))=\delta_{G}$. As
$f^{-1}=f$ so $f(\delta_{G})= G_{1}(0)$.
\end{proof}

\medskip

\begin{prop} \label{p:3}Let  $G$ be a non abelian subgroup of  $\mathcal{S}_{n}$,  $a\in\delta_{G}$ and  $x\in\mathbb{R}^{n}$. Then:
 $$\overline{G(x)}=(x+\overline{G_{1}(0)})\cup(-x+a+ \overline{G_{1}(0)}).$$
\end{prop}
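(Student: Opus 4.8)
The plan is to make the orbit completely explicit by splitting $G$ according to the sign $\varepsilon\in\{-1,1\}$ of its elements, and only at the very end pass to closures. Since $G\subset\mathcal{S}_{n}$, every $f\in G$ has the form $f=(c,\varepsilon)$ with $\varepsilon\in\{-1,1\}$, and $f$ is a translation precisely when $\varepsilon=1$. Thus $G$ is the disjoint union of $G_{1}=G\cap\mathcal{T}_{n}$, whose translation vectors are exactly $H_{G}=G_{1}(0)$, and of $G\cap(\mathcal{S}_{n}\backslash\mathcal{T}_{n})$, whose elements are the symmetries $(c,-1)$ with $c\in\delta_{G}$ by the very definition of $\delta_{G}$. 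Applying each $f$ to $x$, a translation $(b,1)$ sends $x$ to $x+b$ and a symmetry $(c,-1)$ sends $x$ to $-x+c$; hence
$$G(x)=(x+H_{G})\cup(-x+\delta_{G}).$$

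First I would identify $\delta_{G}$ as a single coset of $H_{G}$. Fix the given $a\in\delta_{G}$, so that $f=(a,-1)\in G\backslash\mathcal{T}_{n}$. By Lemma~\ref{L:14}(iii) one has $f(G_{1}(0))=\delta_{G}$, and since $f(b)=-b+a$ for every $b\in H_{G}=G_{1}(0)$, this reads $\delta_{G}=a-H_{G}$. Because $H_{G}$ is an additive subgroup of $\mathbb{R}^{n}$ (Lemma~\ref{L:14}(i)) it is invariant under negation, so $-H_{G}=H_{G}$ and therefore $\delta_{G}=a+H_{G}$. Substituting into the orbit formula gives $G(x)=(x+H_{G})\cup(-x+a+H_{G})$.

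Finally I would take closures of both sides. Each of the affine maps $y\mapsto x+y$ and $y\mapsto -x+a+y$ is a homeomorphism of $\mathbb{R}^{n}$, so it commutes with closure; and the closure of a finite union equals the union of the closures. Hence
$$\overline{G(x)}=(x+\overline{H_{G}})\cup(-x+a+\overline{H_{G}})=(x+\overline{G_{1}(0)})\cup(-x+a+\overline{G_{1}(0)}),$$
which is the claimed identity. The only genuine content is the coset identity $\delta_{G}=a+H_{G}$; I expect this to be the crux of the argument, and it is supplied cleanly by Lemma~\ref{L:14}(iii) together with the symmetry of the subgroup $H_{G}$, while the decomposition of $G$ by the sign $\varepsilon$ and the passage to closures are routine.
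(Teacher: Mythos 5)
Your proposal is correct and follows essentially the same route as the paper: decompose the orbit as $G(x)=(x+H_{G})\cup(-x+\delta_{G})$, use Lemma~\ref{L:14}(iii) together with the fact that $H_{G}$ is an additive subgroup (so $-H_{G}=H_{G}$) to get $\delta_{G}=a+H_{G}$, and then pass to closures. The only difference is the immaterial order of substituting the coset identity versus taking closures first.
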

\bigskip

\begin{proof} Let  $G$ be a non abelian subgroup of  $\mathcal{S}_{n}$ and $x\in \mathbb{R}^{n}$. We have $$G(x)=\{g(x)=\varepsilon x+b, \
g=(b,\varepsilon)\in
G\}=\left(x+G_{1}(0)\right)\cup\left(-x+\delta_{G}\right). \ \
\mathrm{So}$$
$$\overline{G(x)}=\left(x+\overline{G_{1}(0)}\right)\cup\left(-x+\overline{\delta_{G}}\right).\
\ \ \ \ (1) $$\
\\
Since $G$ is non abelian then $G\backslash\mathcal{T}_{n}\neq
\emptyset$, so let $f=(a,-1)\in
G\backslash\mathcal{T}_{n}$ with  $a\in\delta_{G}$. By Lemma ~\ref{L:14}.(iii) we
have $\delta_{G}=f(G_{1}(0))$.  By Lemma ~\ref{L:14}.(i), $G_{1}(0)$ is an additif
subgroup of $\mathbb{R}^{n}$  then $f(G_{1}(0))=G_{1}(0)+a$, so
$\delta_{G}=G_{1}(0)+a$.  Hence
$-x+\overline{\delta_{G}}=-x+\overline{G_{1}(0)}+a$. By $(1)$  we
conclude that
$$\overline{G(x)}=\left(x+\overline{G_{1}(0)}\right)\cup\left(-x+a+\overline{G_{1}(0)}\right).$$
\end{proof}

\section{{\bf Proof of main results}}
\
\\
{\it Proof of Theorem ~\ref{T:1}.} Let $a\in E_{G}$ and $G'=T_{-a}\circ G\circ T_{a}$. By Lemma ~\ref{L:1}.(ii),
 $E_{G'}=T_{-a}(E_{G})$, so $E_{G'}$ is a vector subspace of $\mathbb{R}^{n}$. Then :
\\
$\bullet$ {\it Proof of $(1).(i)$:} One has $\Gamma_{G}\neq\emptyset$ since $G\backslash \mathcal{S}_{n}\neq\emptyset$.
Then by Lemma ~\ref{L:00}, $0\in\overline{\Lambda_{G}}$ and by Lemma~\ref{L:2}.(ii), $E_{G}$ is $G$-invariant.
As $G\backslash \mathcal{S}_{n}\neq\emptyset$, there exist $b,c\in\Gamma_{G}\subset E_{G}$, with $b\neq c$ (Lemma~\ref{L:6}.(iii)),  so $\mathrm{dim}(E_{G})\geq1$. \
\\
$\bullet$ {\it Proof of $(1).(ii)$:} By Proposition ~\ref{p:1},
 $\overline{G'(x-a)}=E_{G'}$, for every $x\in E_{G}$. So $T_{-a}(\overline{G(x)})=E_{G'}$,
  it follows that $\overline{G(x)}=T_{a}(E_{G'})=E_{G}$. So the proof of (1)(i) is complete.
  \
  \\
  $\bullet$ {\it Proof of $(1).(iii)$:} By Proposition ~\ref{p:2},
 $\overline{G'(x-a)}=\overline{\Lambda_{G'}}.(x-a)+E_{G}'$, for every $x\in U$. So by Lemma~\ref{L:1},(ii),
 $T_{-a}(\overline{G(x)})=\overline{\Lambda_{G}}.(x-a)+E_{G}-a$,
  it follows that $\overline{G(x)}=\overline{\Lambda_{G}}.(x-a)+E_{G}$. So the proof of (1)(ii) is complete.
\
\\
$\bullet$ {\it Proof of $(2)$:} The proof of (2) results from  Lemma ~\ref{L:14}.(i) and Proposition ~\ref{p:3}, since $H_{G}=G_{1}(0)$.
\hfill{$\Box$}
\
\\
\\
We will use the following Lemmas to prove Corollary ~\ref{C:1}.
\begin{lem}\label{L:15} Let $G$ be a non abelian
subgroup of $\mathcal{H}(n, \mathbb{R})$ with $G\backslash
\mathcal{S}_{n}\neq\emptyset$, then for every $x\in
U$ we have $\overline{G(x)}=\overline{G(y)}$.
\end{lem}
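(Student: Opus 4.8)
The plan is to reduce to the situation already analysed in Proposition~\ref{p:2} and then exploit the fact that $\overline{\Lambda_{G}}$ is invariant under multiplication by any of its nonzero elements. First I would fix $a\in\Gamma_{G}$ (nonempty since $G\backslash\mathcal{S}_{n}\neq\emptyset$) and pass to the conjugate group $G'=T_{-a}\circ G\circ T_{a}$. By Lemma~\ref{L:1}.(ii), $E_{G'}=T_{-a}(E_{G})$ is a vector subspace of $\mathbb{R}^{n}$ and $\Lambda_{G'}=\Lambda_{G}$; moreover $\overline{G'(z-a)}=T_{-a}(\overline{G(z)})$ and $U'=\mathbb{R}^{n}\backslash E_{G'}=T_{-a}(U)$, so since $T_{-a}$ is a homeomorphism the statement for $G$ is equivalent to the statement for $G'$. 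Hence it suffices to treat the case where $E_{G}$ itself is a vector space, and I read the claim in its intended form: for $x\in U$ and every $y\in\overline{G(x)}\cap U$ one has $\overline{G(x)}=\overline{G(y)}$.

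In that case Proposition~\ref{p:2} gives $\overline{G(x)}=\overline{\Lambda_{G}}\,x+E_{G}$ for every $x\in U$. Taking $y\in\overline{G(x)}\cap U$, I write $y=\alpha x+e$ with $\alpha\in\overline{\Lambda_{G}}$ and $e\in E_{G}$. If $\alpha$ were $0$ then $y=e\in E_{G}$, contradicting $y\notin E_{G}$; hence $\alpha\neq0$. Applying Proposition~\ref{p:2} again to $y\in U$ yields $\overline{G(y)}=\overline{\Lambda_{G}}\,y+E_{G}$, and since $\beta e\in E_{G}$ for every $\beta\in\overline{\Lambda_{G}}$ (as $E_{G}$ is a vector space), this rewrites as $\overline{G(y)}=(\alpha\overline{\Lambda_{G}})\,x+E_{G}$.

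The crux of the argument, and the step I expect to be the main obstacle, is the identity $\alpha\overline{\Lambda_{G}}=\overline{\Lambda_{G}}$ for every $\alpha\in\overline{\Lambda_{G}}\backslash\{0\}$; this is exactly where the multiplicative group structure of $\Lambda_{G}$ (Lemma~\ref{L:00}) is essential. I would prove it as follows. Choose $\lambda_{k}\in\Lambda_{G}$ with $\lambda_{k}\to\alpha$; since $\alpha\neq0$ we also have $\lambda_{k}^{-1}\in\Lambda_{G}$ and $\lambda_{k}^{-1}\to\alpha^{-1}$, so $\alpha^{-1}\in\overline{\Lambda_{G}}$. For any $\beta\in\overline{\Lambda_{G}}$, pick $\mu_{j}\in\Lambda_{G}$ with $\mu_{j}\to\beta$; then $\lambda_{k}\mu_{j}\in\Lambda_{G}$, letting $j\to\infty$ gives $\lambda_{k}\beta\in\overline{\Lambda_{G}}$, and letting $k\to\infty$ gives $\alpha\beta\in\overline{\Lambda_{G}}$ (using that $\overline{\Lambda_{G}}$ is closed). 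Thus $\alpha\overline{\Lambda_{G}}\subset\overline{\Lambda_{G}}$; the same reasoning applied to $\alpha^{-1}$ gives $\alpha^{-1}\overline{\Lambda_{G}}\subset\overline{\Lambda_{G}}$, i.e. $\overline{\Lambda_{G}}\subset\alpha\overline{\Lambda_{G}}$, and the two inclusions give equality.

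Combining these, $\overline{G(y)}=(\alpha\overline{\Lambda_{G}})\,x+E_{G}=\overline{\Lambda_{G}}\,x+E_{G}=\overline{G(x)}$, which is the desired conclusion; transporting back by $T_{a}$ settles the original statement. I would emphasise that the hypothesis $y\in U$ is used precisely to force $\alpha\neq0$: for $y\in E_{G}$ one instead has $\overline{G(y)}=E_{G}$ by Proposition~\ref{p:1}, so the minimality genuinely lives on $U$.
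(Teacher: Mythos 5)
Your proof is correct and follows essentially the same route as the paper: reduce to the case where $E_{G}$ is a vector space by conjugating with a translation, apply Proposition~\ref{p:2} to get $\overline{G(x)}=\overline{\Lambda_{G}}x+E_{G}$, decompose $y=\alpha x+e$ with $\alpha\in\overline{\Lambda_{G}}\backslash\{0\}$ and $e\in E_{G}$, and conclude from $\alpha\overline{\Lambda_{G}}=\overline{\Lambda_{G}}$. Your explicit limit argument for that last identity is in fact slightly more careful than the paper's, which writes ``since $\alpha\in\Lambda_{G}$'' and cites Lemma~\ref{L:00} even though $\alpha$ need only lie in $\overline{\Lambda_{G}}\backslash\{0\}$ rather than in $\Lambda_{G}$.
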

\bigskip

\begin{proof} Suppose that $E_{G}$ is a vector space (leaving to replace $G$ by $G'=T_{-a}\circ G \circ T_{a}$ for some $a\in E_{G}$,
 and by Lemma ~\ref{L:1}.(ii), $E_{G'}=T_{-a}(E_{G})$ is a vector space). Let $x\in U$ and $y\in \overline{G(x)}\cap U$.
  By Theorem ~\ref{T:1}.(1).(ii), there exists $a\in E_{G}$ such that
 $\overline{G(x)}=\overline{\Lambda_{G}}(x-a)+E_{G}$. Since $E_{G}$ is a vector space and $a\in E_{G}$
 then $\overline{G(x)}=\overline{\Lambda_{G}}x+E_{G}$. In the same way, $$\overline{G(y)}=\overline{\Lambda_{G}}y+E_{G}, \ \ \ \ \ \ \ \ \ (1).$$ See that $\overline{G(x)}\cap U= (\overline{\Lambda_{G}}\backslash\{0\})x+E_{G}$.
  Write $y=\alpha x+b$, where $\alpha\in\overline{\Lambda_{G}}\backslash\{0\}$ and $b\in E_{G}$. So by (1),
$$\overline{G(y)}=\overline{\Lambda_{G}}y+E_{G}=\overline{\Lambda_{G}}(\alpha x+b)+E_{G}=\alpha\overline{\Lambda_{G}} x+E_{G}.$$ Since $\alpha\in\Lambda_{G}$ and by Lemma ~\ref{L:00},
$\overline{\Lambda_{G}}\backslash\{0\}$ is a subgroup of $\mathbb{R}^{*}$, then $\alpha \overline{\Lambda_{G}}=\overline{\Lambda_{G}}$.
 Therefore $\overline{G(y)}=\overline{\Lambda_{G}}x+E_{G}=\overline{G(x)}.$
\end{proof}
\medskip

\begin{lem}\label{L:16}  Let $G$ be a non abelian
subgroup of $\mathcal{H}(n, \mathbb{R})$ such that  $E_{G}$
 is a vector subspace of $\mathbb{R}^{n}$. Let  $x\in U$  then
 the vector subspace   $H_{x}=\mathbb{R}x\oplus E_{G}$  of
 $\mathbb{R}^{n}$  is $G$-invariant.
\end{lem}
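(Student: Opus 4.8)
The plan is to verify directly that $f(H_x)\subset H_x$ for every $f\in G$, using the affine form of elements of $\mathcal{H}(n,\mathbb{R})$ together with Lemma~\ref{L:2}.(i). The only genuine input is that, because $E_G$ is a vector space, the translation part $f(0)$ of each $f\in G$ already lies in $E_G$; the rest is linearity and the fact that $E_G$ is closed under scalar multiplication.

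First I would record that $H_x$ is genuinely a vector subspace. Since $x\in U=\mathbb{R}^{n}\backslash E_G$ and $E_G$ is a vector subspace, no nonzero multiple of $x$ belongs to $E_G$ (otherwise $x\in E_G$), so $\mathbb{R}x\cap E_G=\{0\}$ and the sum $H_x=\mathbb{R}x\oplus E_G$ is direct, hence a vector subspace of $\mathbb{R}^{n}$.

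Next, fix $f\in G$. Every element of $\mathcal{H}(n,\mathbb{R})$ has the form $f(y)=\lambda y+f(0)$ for some $\lambda\in\Lambda_G$, and by Lemma~\ref{L:2}.(i), since $E_G$ is a vector space, $f(0)\in E_G$. Now take an arbitrary $y=\alpha x+e\in H_x$ with $\alpha\in\mathbb{R}$ and $e\in E_G$. Then $f(y)=\lambda(\alpha x+e)+f(0)=(\lambda\alpha)x+(\lambda e+f(0))$. Because $E_G$ is a vector space, $\lambda e\in E_G$ and $f(0)\in E_G$, so $\lambda e+f(0)\in E_G$; therefore $f(y)\in\mathbb{R}x+E_G=H_x$. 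As $y$ and $f$ were arbitrary, $f(H_x)\subset H_x$ for every $f\in G$, i.e. $H_x$ is $G$-invariant.

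I do not expect a real obstacle here: the statement reduces entirely to the observation that the translation parts of elements of $G$ sit inside $E_G$, which is exactly Lemma~\ref{L:2}.(i) under the standing hypothesis that $E_G$ is a vector space. Everything else is the routine computation above, so the proof is short.
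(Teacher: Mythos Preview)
Your proof is correct and follows essentially the same route as the paper: both use Lemma~\ref{L:2}.(i) to conclude that $f(0)\in E_G$ for every $f\in G$, and then compute $f(\alpha x+e)=\lambda\alpha\,x+(\lambda e+f(0))\in H_x$. The only addition in your version is the explicit verification that the sum $\mathbb{R}x+E_G$ is direct, which the paper's statement asserts but its proof does not spell out.
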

\medskip

\begin{proof} Let  $x\in
\mathbb{R}^{n}\backslash E_{G}$  and
$H_{x}=\mathbb{R}x+E_{G}$. Let  $f\in G$ having the form
 $f(z)=\lambda z+a$,  $z\in \mathbb{R}^{n}$, then by Lemma ~\ref{C:1}.(i),  $a=f(0)\in
E_{G}$. For every $\alpha\in \mathbb{R}$, $b\in E_{G}$,
we have  $f(\alpha x +b)=\lambda(\alpha x +b)+a=\lambda\alpha
x+\lambda b+a$.  Since $E_{G}$ is a vector space, then
$\lambda b+a\in E_{G}$ and so $f(\alpha x +b)\in H_{x}$.
\end{proof}
\bigskip

\begin{proof}[Proof of Corollary ~\ref{C:1}]\ \\
$\bullet$ {\it The proof of $(i)$:} The proof results from Lemma ~\ref{L:15}.\
\\
$\bullet$ {\it The proof of $(ii)$:} As $G\backslash\mathcal{S}_{n}\neq \emptyset$, then by Lemma~\ref{L:00}, $0\in \overline{\Lambda_{G}}$. So
 the proof of (ii) results from Theorem 1,1.(1).(ii).
\\
$\bullet$ {\it The proof of $(iii)$:}   Suppose that
$E_{G}$ is a vector subspace of $\mathbb{R}^{n}$ (leaving, by Lemma ~\ref{C:1}.(ii), to
replace $G$ by $G'=T_{-a}\circ G \circ T_{a}$, for some $a\in E_{G}$).

Recall that $U= \mathbb{R}^{n}\backslash E_{G}$ and let  $x, y\in U$
 with $x\neq y$.  Denote by  $H_{x}=\mathbb{R}.x\oplus E_{G}$
 and by   $H_{y}=\mathbb{R}.y\oplus E_{G}$. By lemma ~\ref{L:16} we have
 $H_{x}$  and  $H_{y}$  are $G$-invariant. Let   $\varphi:\ H_{x}\longrightarrow
 H_{y}$  be the homeomorphism defined by $\varphi(\alpha x+v)=\alpha y+v$
 for every $\alpha\in \mathbb{R}$  and  $v\in E_{G}$. For
 every  $f\in G$, with the form $f(z)=\lambda
 z+a$, $z\in\mathbb{R}^{n}$,  then by Lemma ~\ref{L:2}.(i), $a=f(0)\in E_{G}$  and so  $\varphi(f(x))=\varphi(\lambda x+a)=\lambda
 y+a=f(y)$. It follows that  $\varphi(G(x))=G(y)$.
\end{proof}
\
\\
\\
{\it Proof of Corollary ~\ref{C:2}.}\
\\
 $\bullet$ {\it The proof of $(i)$:} From
Corollary ~\ref{C:1}.(ii),  the closure of every orbit of $G$ contains $E_{G}$ and by Theorem
~\ref{T:1}.(1), we have  $\mathrm{dim}(E_{G})\geq 1$, so $G$ has no periodic orbit. Moreover, if $G$ is countable
then every orbit $O$ is also countable, hence $O$ can not be closed.\
\\
$\bullet$ {\it The proof of $(ii)$:}  Let $x\in\mathbb{R}^{n}$ and $y\in
\overline{G(x)}$. By Proposition ~\ref{p:3} we have
$\overline{G(x)}=\left(x+\overline{G_{1}(0)}\right)\cup\left(-x+a+\overline{G_{1}(0)}\right)$.
Suppose that $y\in (x+\overline{G_{1}(0)})$   then $y=x+b$ for
some $b\in\overline{G_{1}(0)}$. By Lemma ~\ref{L:14}.(i), $G_{1}(0)$ is an additif group, so $b+G_{1}(0)=G_{1}(0)$.
Therefore, by Proposition ~\ref{p:3} we have

\begin{align*}
\overline{G(y)} & =\left(x+b+\overline{G_{1}(0)}\right)\cup\left(-x-b+a+\overline{G_{1}(0)}\right)\\
\ & =\left(x+\overline{G_{1}(0)}\right)\cup\left(-x+a+\overline{G_{1}(0)}\right)=\overline{G(x)}.
\end{align*}
\
\\
 The same proof is used if $y\in
(x+a+\overline{G_{1}(0)})$.  \hfill{$\Box$}
\
\\
\\
\\
{\it Proof of Corollary ~\ref{C:3}.} Let $G$ is a non abelian subgroup of  $\mathcal{H}(n, \mathbb{R})$  such that  $G\backslash
\mathcal{S}_{n}\neq\emptyset$. Suppose that  $E_{G}$ \ is a
vector subspace of $\mathbb{R}^{n}$ (leaving, by Lemma ~\ref{L:1}.(ii), to replace $G$ by
$T_{-a}\circ G \circ T_{a}$, for some $a\in E_{G}$.)
\\
\\
$\bullet$ Let's prove that $(1)$ and $(2)$ are equivalent: if $\overline{G(x)}=\mathbb{R}^{n}$, for some $x\in \mathbb{R}^{n}$, so $x\in U$.
Let $y\in U$, then by Corollary ~\ref{C:1}.(i), $\overline{G(y)}\cap U=\overline{G(x)}\cap U=U$. Since $U$ is dense in
$\mathbb{R}^{n}$, $\overline{G(y)}=\mathbb{R}^{n}$. Conversely, the proof is obvious.
\\
\\
 $\bullet$ $(3).(i)\Longrightarrow (1)$: If
$E_{G}=\mathbb{R}^{n}$  then by Theorem ~\ref{T:1}.(1).(i) we have
$\overline{G(x)}=E_{G}$, for every $x\in E_{G}$. So $G$
has a dense orbit.
\\
\\
$\bullet$ $(3).(ii)\Longrightarrow (1)$: If \ $\Lambda_{G}$  is
dense in  $\mathbb{R}$  then by Theorem ~\ref{T:1}.(1).(ii) we have
$\overline{G(x)}=\overline{\Lambda_{G}}x+E_{G}$,   for every
$x\in U$. So $G$ has a dense orbit.
\\
\\
$\bullet$ $(1)\Longrightarrow (3)$: Suppose that $G$ has a
dense orbit $G(x)$, for some  $x\in \mathbb{R}^{n}$.  There are
tow cases:
\\
- If  $E_{G}=\mathbb{R}^{n}$,  then we obtain $(3).(i)$.
\\
- If  $E_{G}\neq\mathbb{R}^{n}$  then $\mathrm{dim}(E_{G})\leq
n-1 $, so  and $U\neq\emptyset$, hence  $x\in U$. By Theorem ~\ref{T:1}.(1).(ii) we
have  $\overline{G(x)}=\mathbb{R}x+E_{G}$,  so
$dim(E_{G})=n-1$  and  $\Lambda_{G}$  is dense in
$\mathbb{R}$. Then $(3).(ii)$ follows.\hfill{$\Box$}
\
\\
\\
We use the following Lemma to prove Corollary ~\ref{C:4}:

\begin{lem}\label{L:17} Let $H$  be an additif subgroup of $\mathbb{R}^{n}$. Then
$$\overset{\circ}{\overline{H}}\neq\emptyset \ \ \ if\ and\ only \ if \ \ \ \ \overline{H} = \mathbb{R}^{n}$$
\end{lem}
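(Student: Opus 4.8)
The plan is to reduce everything to the standard fact that the closure of a subgroup is again a subgroup, and then to run a simple rescaling argument. The forward implication is immediate: if $\overline{H}=\mathbb{R}^{n}$, then $\overset{\circ}{\overline{H}}=\overset{\circ}{\mathbb{R}^{n}}=\mathbb{R}^{n}\neq\emptyset$. So all the content is in the reverse implication, and I would spend the proof there.

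First I would record that $\overline{H}$ is a closed additive subgroup of $\mathbb{R}^{n}$: since $H$ is a subgroup, for $x,y\in\overline{H}$ one approximates $x,y$ by sequences in $H$ and passes to the limit in $x-y$, using continuity of $(u,v)\mapsto u-v$, so $x-y\in\overline{H}$; in particular $0\in\overline{H}$ and $\overline{H}$ is stable under negation and under all integer multiples. Next, assuming $\overset{\circ}{\overline{H}}\neq\emptyset$, I would pick an interior point $x_{0}\in\overline{H}$ and a radius $r>0$ with the open ball $B(x_{0},r)\subset\overline{H}$. Translating by $-x_{0}\in\overline{H}$ and using stability under addition, every $y$ with $\norm{y}<r$ satisfies $y=(y+x_{0})+(-x_{0})\in\overline{H}$, so $B(0,r)\subset\overline{H}$.

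The final step is the rescaling. Given an arbitrary $z\in\mathbb{R}^{n}$, I would choose an integer $N$ with $\norm{z}/N<r$, so that $\tfrac{1}{N}z\in B(0,r)\subset\overline{H}$; since $\overline{H}$ is closed under integer multiples, $z=N\cdot(\tfrac{1}{N}z)\in\overline{H}$. As $z$ was arbitrary, $\overline{H}=\mathbb{R}^{n}$, which completes the equivalence.

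I do not expect a genuine obstacle here; the only point requiring minor care is making sure $\overline{H}$ really is a subgroup (so that both the translation by $-x_{0}$ and the passage from $\tfrac1N z$ to $z$ are legitimate), and that the ball containing an interior point can be recentered at $0$. Everything else is a one-line continuity argument plus the Archimedean choice of $N$, so no heavy machinery such as the full classification of closed subgroups of $\mathbb{R}^{n}$ is needed.
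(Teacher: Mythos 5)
Your proof is correct and follows essentially the same route as the paper: verify that $\overline{H}$ is an additive subgroup, recenter the ball at the origin by translating by $-x_{0}$, and then dilate by integer multiples (your step $z=N\cdot(\tfrac{1}{N}z)$ is exactly the paper's observation that $B_{(0,m\varepsilon)}=mB_{(0,\varepsilon)}\subset\overline{H}$ and these balls exhaust $\mathbb{R}^{n}$). No substantive difference.
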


\begin{proof}
Suppose that $\overset{\circ}{\overline{H}}\neq\emptyset$ and let $a\in \overset{\circ}{\overline{H}}\neq\emptyset$. Then there exists $\varepsilon >0$ such that
 $B_{(a,\varepsilon)}\subset \overline{H}\neq\emptyset$,
 where $B_{(a,\varepsilon)}=\{x\in\mathbb{R}^{n}:\ \|x-a\|<\varepsilon\}$ and $\|.\|$ is the euclidian norm. Since
 \ $\overline{H}\neq\emptyset$  is an additif group, it follows that
 $B_{(0,\varepsilon)} = T_{-a}\left(B_{(a,\varepsilon)}\right)\subset \overline{H}$. Moreover, we also have $B_{(0,m\varepsilon)} =
mB_{(0,\varepsilon)}\subset\overline{H}$, for every $m\in\mathbb{N}^{*}$. As
   $\mathbb{R}^{n}=\underset{m\in\mathbb{N}^{*}}{\bigcup}B_{(0,m\varepsilon)}\subset\overline{H}$, it follows that
   $\overline{H}=\mathbb{R}^{n}$. Conversely, the proof is obvious.
\end{proof}
\
\\
\\
{\it  Proof of Corollary ~\ref{C:4}.} Let $G$ be a non abelian
subgroup of  $\mathcal{S}_{n}$. By Lemma ~\ref{L:2}.(i), one has
$\delta_{G}\neq \emptyset$. Let $a\in \delta_{G}$ and $f=(a,-1)\in
G$.\\
$\bullet$ First, by Corollary ~\ref{C:2}.(ii) we prove that\ $(i)$ , $(ii)$ and $(iii)$ are
equivalent.\\
$\bullet$ Second, let's prove that $(iii)$ and $(iv)$ are equivalent:
Suppose that  $\overline{G(0)}=\mathbb{R}^{n}$.  By Proposition
~\ref{p:3} we have $\overline{G(0)}= \overline{G_{1}(0)}\cup
(a+\overline{G_{1}(0)})$. Since
$\overset{\circ}{\overline{G(0)}}\neq\emptyset$ then
$\overset{\circ}{\overline{G_{1}(0)}}\neq\emptyset$. By Lemma
~\ref{L:14}.(i), $H_{G}=G_{1}(0)$ is an additive  subgroup of $\mathbb{R}^{n}$
then by Lemma ~\ref{L:17}, $\overline{H_{G}}=\mathbb{R}^{n}$. Conversely, if
$\overline{H_{G}}=\mathbb{R}^{n}$  then  by By Proposition
~\ref{p:3} we have $\overline{G(0)}= \overline{G_{1}(0)}\cup
(a+\overline{G_{1}(0)})=\mathbb{R}^{n}$. \hfill{$\Box$}
\
\\
\\
{\it Proof of \ Corollary ~\ref{C:5}.} For $n=1$,  $G$ is a non abelian group of affine maps of $\mathbb{R}$.\\
$\bullet$ {\it The proof of $(i)$:} If  $G\backslash\mathcal{S}_{1}\neq\emptyset$,
then by Theorem ~\ref{T:1}.(1),  $E_{G}$ is a $G$-invariant affine
subspace of $\mathbb{R}$ with dimension $p= 1$ such that every
orbit of $E_{G}$ is dense in it. In this case $E_{G}=\mathbb{R}$.
\\
$\bullet$ {\it The proof of $(ii)$:}  If  $G\subset\mathcal{S}_{1}$, then by
Theorem ~\ref{T:1}.(2), $H_{G}$  is a $G$-invariant closed subgroup  of
$\mathbb{R}$ and there exists $a\in E_{G}$ such that for every $x\in
\mathbb{R}$, we have $\overline{G(x)}=(x+H_{G})\cup(-x+a+H_{G})$. Then
there are two cases: \\
$\diamond$ If $H_{G}$ is dense in $\mathbb{R}$, so every
orbit of $G$ is dense in $\mathbb{R}$.\\
$\diamond$  If $H_{G}$ is  discrete then every orbit is closed and discrete.
\bigskip

\section{{\bf Examples }}
\begin{exe} Let  $G$  be a subgroup of  $\mathcal{H}(2, \mathbb{R})$  generated by  $f_{1}=(a_{1},\alpha_{1})$
and \ $f_{2}=(a_{2},\alpha_{2})$  and  $f_{3}=(a_{3},
\alpha_{3})$, where $\alpha_{k}\neq 1$,  for every $1\leq k \leq
3$  and  $a_{1}=\left[\begin{array}{c}
               \sqrt{2} \\
               0
             \end{array}
\right]$,  $a_{2}=\left[\begin{array}{c}
               0 \\
               1
             \end{array}
\right]$  and  $a_{3}=\left[\begin{array}{c}
               -\sqrt{3} \\
               -\sqrt{2}
             \end{array}
\right]$. Then every orbit of $G$ is dense in  $\mathbb{R}$.
\
\\
\\
Indeed, by Lemma ~\ref{L:6}.(i), $G$ is non abelian. By Proposition ~\ref{p:1}, for
every $x\in E_{G}$, we have $\overline{G(x)}=E_{G}$. In
this case, by Remark~\ref{r:3}, $E_{G}=\mathbb{R}^{2}$ so every orbit of
$G$ is dense in  $\mathbb{R}^{2}$.
\end{exe}
\bigskip

\begin{exe}Let  $(a_{1},\dots,a_{n})$  be a basis of  $\mathbb{R}^{n}$ \ and $a=\underset{k=1}{\overset{n}{\sum}}\alpha_{k}a_{k}$,
 with $\underset{k=1}{\overset{n}{\sum}}\alpha_{k}a_{k}\neq 1$, then for every $t>1$,    the subgroup  $G$  of
$\mathcal{H}(n, \mathbb{R})$  generated by
 $\{f=(a,t), \ T_{a_{k}},  2\leq k \leq n\}$  is minimal. (i.e. every orbit of  $G$  is dense
 in  $\mathbb{R}^{n}$).
\
\\
\\Indeed;  By Remark ~\ref{r:3}, we have  $E_{G}=\mathbb{R}^{n}$
and by Proposition ~\ref{p:1}, every orbit of $G$ is dense in
$\mathbb{R}^{n}$.
\end{exe}
\medskip

\begin{exe} Let   $(a_{1},\dots,a_{n})$ \ be a basis of $\mathbb{R}^{n}$ and  $\lambda\in \mathbb{R}\backslash\{0,1\}$.
  Then every orbit of the group generated by $T_{a_{1}},\dots,T_{a_{n}}, \ \lambda Id$  is dense in $\mathbb{R}^{n}$.
\
\\
\\
  Indeed, By Remark ~\ref{r:3} we have $E_{G}=\mathbb{R}^{n}$ and by Proposition ~\ref{p:1} every orbit of $G$ is dense in $\mathbb{R}^{n}$.
 \end{exe}
  \medskip

  \begin{exe}\label{ex:4} Let   $a\in \mathbb{R}^{n}$ and  $G$ be the group generated by $f=T_{a}$,
  $g=(a,-1)$ and $h=T_{\sqrt{2}a}$. Then
  for every $x\in \mathbb{R}^{n}\backslash \mathbb{R}a$, we have
  $G(0)$ and $G(x)$ are not homeomorphic.
  \end{exe}
\medskip

  \begin{proof} Remark that for every  $\varphi\in G_{1}$,
   there exist $n_{1},m_{1},p_{1},\dots,n_{r},m_{r},p_{r}\in \mathbb{Z}$ such that $\varphi=(f^{n_{1}}\circ g^{m_{1}}\circ h^{p_{1}})\circ
  \dots \circ (f^{n_{r}}\circ g^{m_{r}}\circ h^{p_{r}})$, for some
  $r\in \mathbb{N}^{*}$.\\
\\
  $\bullet$ First, let's show by induction on $r\geq 1$ that
  $$\varphi(0)\in(\mathbb{Z}+\sqrt{2}\mathbb{Z})a\  \ \ (i).$$
\\
For $r=1$, we have \begin{align*}
\varphi(0)& =f^{n_{1}}\circ g^{m_{1}}\circ
h^{p_{1}}(0)\\
\ & =-p_{1}\sqrt{2}a+m_{1}a+n_{1}a\\
\ & =(m_{1}+n_{1}+\sqrt{2}p_{1})a.\
\end{align*}
\\
So $\varphi(0)\in\in
(\mathbb{Z}+\sqrt{2}\mathbb{Z})a$.\
\\
  Suppose property (i) is true up to order $r-1$. If $$\varphi=\left(f^{n_{1}}\circ g^{m_{1}}\circ
  h^{p_{1}}\right)\circ\left(f^{n_{2}}\circ g^{m_{2}}\circ h^{p_{2}}\circ
  \dots \circ f^{n_{r}}\circ g^{m_{r}}\circ h^{p_{r}}\right),$$ then by
  induction property there exists $p,q\in \mathbb{Z}$ such that $$f^{n_{2}}\circ g^{m_{2}}\circ h^{p_{2}}\circ
  \dots \circ f^{n_{r}}\circ g^{m_{r}}\circ
  h^{p_{r}}(0)=(p+\sqrt{2}q)a.$$ So $\varphi(0)= f^{n_{1}}\circ g^{m_{1}}\circ
  h^{p_{1}}((p+\sqrt{2}q)a),$ thus  $$\varphi(0)=
  \begin{cases}
  -\left((p+\sqrt{2}q)a+p_{1}a\right)+a+n_{1}a, &
  \mathrm{if} \ \
  m_{1}\ \mathrm{is \ odd}, \\
    \left((p+\sqrt{2}q)a+p_{1}a\right)+n_{1}a, &
  \mathrm{if} \ \ m_{1}\ \mathrm{is \ even}.
  \end{cases}$$
   Hence,
  $\varphi(0)\in(\mathbb{Z}+\sqrt{2}\mathbb{Z})a$.\\

  It follows that $$G_{1}(0)\subset(\mathbb{Z}+\sqrt{2}\mathbb{Z})a\ \ \ \ \ \ (1)$$\
  \\
\\
  $\bullet$ Second, we will proof that
  $G_{1}(0)=(\mathbb{Z}+\sqrt{2}\mathbb{Z})a$. let $p,q\in \mathbb{Z}$, we have $f^{p}\circ
  h^{q}=T_{(p+\sqrt{2}q)a}$, thus  $f^{p}\circ
  h^{q}(0)=(p+\sqrt{2}q)a\in G_{1}(0)$. It follows by (1) that $G_{1}(0)=(\mathbb{Z}+\sqrt{2}\mathbb{Z})a$.   With the same proof we can
  show that $\delta_{G}=G_{1}(0)$.\\
\\
  $\bullet$ Thirdly, by Proposition ~\ref{p:3} for every $x\in\mathbb{R}^{n}$, we
  have $\overline{G(x)}=(x+\overline{G_{1}(0)})\cup (-x+\overline{G_{1}(0)})$. Therefore
  $\overline{G(0)}=\overline{G_{1}(0)}=\overline{(\mathbb{Z}+\sqrt{2}\mathbb{Z})}a=\mathbb{R}a$
  and it is connected. But $\overline{G(x)}=(x+\mathbb{R}a)\cup
  (-x+\mathbb{R}a)$, is not connected for every $x\in
  \mathbb{R}^{n}\backslash\mathbb{R}a$. Hence $G(0)$ and $G(x)$
  can not be homeomorphic.
 \end{proof}
 \bigskip

 \begin{rem} Remark that the form of $\varphi$ used in the proof of Example ~\ref{ex:4}
 is general of every $\varphi \in G$ and the order
 $(f^{n_{k}}\circ g^{m_{k}}\circ h^{p_{k}})$ is not particular of $\varphi$. For example, if
 $\varphi=g^{m}\circ f^{n}\circ h^{p}$, we write
 $$\varphi=(f^{n_{1}}\circ g^{m_{1}}\circ h^{p_{1}})\circ(f^{n_{2}}\circ g^{m_{2}}\circ h^{p_{2}})\circ(f^{n_{3}}\circ g^{m_{3}}\circ h^{p_{3}})$$
  with $n_{1}=p_{1}=m_{2}=p_{2}=n_{3}=m_{3}=0$,  $m_{1}=m$, $n_{2}=n$ and $p_{3}=p$.
 \end{rem}

\bibliographystyle{amsplain}
\vskip 0,4 cm

\end{document}